\newtheorem{thm}{Theorem}[section]
\newtheorem{defi}[thm]{Definition}
\newtheorem{prop}[thm]{Proposition}
\newtheorem{lem}[thm]{Lemma}
\newtheorem{rem}[thm]{Remark}
\newcounter{cnstcnt}
\numberwithin{equation}{section}
\renewcommand{\rm}[1]{\mathrm{#1}}
\renewcommand{\cal}[1]{\mathcal{#1}}
\newcommand{\bb}[1]{\mathbb{#1}}
\def\RR{\bb R}
\def\BB{\mathbbm 1}
\def\R{\bb R}
\def\C{\bb C}
\newcommand{\dd}{\mathrm{d}}
\newcommand{\supp}{\rm{supp}}
\newcommand{\murr}[1]{(\mu_{#1}*\mu_{#1}^-)^{(r)}}
\def\fren {\tau}
\def\ndelta {\cal N_\delta}
\def\nrho {\cal N_\rho}
\def\l{\langle }
\def\r{\rangle }
\def\id{\rm{id}}
\def\nude{\nu_\delta}
\begin{document}
	\bibliographystyle{alpha}
	\title{\textbf{Discretized Sum-product and Fourier decay in $\bb R^n$}}
	\author{Jialun LI}
	\date{}
	\maketitle
\begin{abstract}
	We generalize Bourgain's theorem on the decay of the Fourier transform of the multiplicative convolution of measures on $\R$ to the ring $\R^n$, where the multiplication is given by coordinate multiplication.
\end{abstract}	
	
	
\section{Introduction}

The purpose of this manuscript is to generalize a result of Bourgain in \cite{bourgain2010discretized} to $\RR^n$. This result deals with the Fourier decay of the multiplicative convolution of Borel probability measures on $\RR$. 

If $E$ is a metric space, we write $B_E(x,r)$ for a closed ball centred at $x$ of radius $r$. Vectors in $\R^n$ are seen as column vectors. The product structure on $\R^n$ is given by coordinate, that is, for $x=(x^1,\cdots, x^n)$ and $y=(y^1,\cdots, y^n)$ in $\R^n$ the product is defined to be $xy=(x^1y^1,\cdots, x^ny^n)$. We equip $\R^n$ with the standard scalar product $\langle\cdot,\cdot \rangle$. For a Borel probability measure on $\R^n$, let $\mu_k$ be the $k$-times multiplicative convolution of $\mu$. For a vector $v$ in the unit sphere $\bb S^{n-1}$, let $\pi_v$ be the projection from $\R^n$ to $\R$ given by $\pi_v(x)=\langle v, x\rangle$ for $x$ in $\R^n$.

\begin{thm}\label{thm:sumfourier}
	Given $\kappa_0>0$, there exist $\epsilon,\epsilon_1>0$ and $k\in \bb N$ such that the following holds for $\delta>0$ small enough. Let $\mu$ be a probability measure on $[1/2,1]^n\subset \R^n$ which satisfies $(\delta,\kappa_0,\epsilon)$ projective non concentration assumption, that is, 
	\begin{equation}\label{equ:noncon}
	\forall\rho\geq\delta,\quad\sup_{a\in\bb R,v\in\bb S^{n-1}}(\pi_v)_*\mu(B_\RR(a,\rho))=\sup_{a,v}\mu\{x| \l v, x\r\in B_\RR(a,\rho) \}\leq\delta^{-\epsilon} \rho^{\kappa_0}.
	\end{equation}	
	
	Then for all $\xi\in\bb R^n$ with $\|\xi\|\in[\delta^{-1}/2,\delta^{-1}]$,
	\begin{equation}\label{equ:exp2pi}
	|\hat{\mu}_k(\xi)|=\left|\int \exp(2i\pi\l \xi, x_1\cdots x_k\r)\dd\mu(x_1)\cdots \dd\mu(x_k)\right|\leq \delta^{\epsilon_1}.
	\end{equation}
	\end{thm}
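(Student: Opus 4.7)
I would follow the two-scale scheme of Bourgain's discretized sum-product Fourier decay theorem and adapt each step to the coordinate ring $\R^n$. The starting point is a standard Plancherel identity: with $*$ denoting additive convolution,
\[
|\hat{\mu}_k(\xi)|^{2r}=\widehat{(\mu_k*\mu_k^-)^{(r)}}(\xi),
\]
so that, after smoothing at scale $\delta$, Theorem~\ref{thm:sumfourier} is implied by showing that the $r$-fold additive self-convolution of the symmetrization $\mu_k*\mu_k^-$ has small $L^2$ norm at scale $\delta$. The goal of the rest of the proof is therefore to force this $L^2$ norm to decrease all the way down to the trivial floor $\delta^{-n/2}$.

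The heart of the argument should be a \emph{multiplicative $L^2$-flattening lemma}: for a probability measure $\nu$ supported in $[-2,2]^n$, smoothed at scale $\delta$, if $\|\nu\|_2$ has not yet reached $\delta^{-n/2}$, then further multiplicative convolution by $\mu$ gains a fixed factor $\delta^{\epsilon_2}$ in $L^2$, for some $\epsilon_2=\epsilon_2(\kappa_0)>0$. The standard route to such a lemma is by contradiction, via Balog--Szemer\'edi--Gowers and Pl\"unnecke--Ruzsa: failure of flattening forces $\nu$ to concentrate on an approximate additive subset $A\subset\R^n$ of small doubling, and forces a positive-mass subset $B\subset[1/2,1]^n$ of $\mu$ to approximately stabilize $A$ under coordinate-wise multiplication.

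The main obstacle, and the place where the $\R^n$ setting really differs from Bourgain's one-dimensional argument, is the sum-product input: one must prove a \emph{discretized sum-product theorem in the coordinate ring $\R^n$} compatible with the projective non-concentration hypothesis \eqref{equ:noncon}. Genuine obstructions exist, since product sets of the form $H_1\times\cdots\times H_n$ where each $H_i\subset\R$ is an additive $\delta$-approximate subgroup are stable under both addition and coordinate multiplication, and they lie in a coordinate hyperplane as soon as some $H_i$ is small. The hypothesis \eqref{equ:noncon}, which controls $(\pi_v)_*\mu$ in every direction $v\in\bb S^{n-1}$, is exactly what forbids such degenerations. My plan is to pigeonhole a direction $v\in\bb S^{n-1}$ along which the hypothetical stable set $A$ is spread out, apply Bourgain's one-dimensional discretized sum-product theorem to the projections $\pi_v(A)$ and $\pi_v(B\cdot A)$, and use a fibre decomposition of $\R^n$ along the hyperplane $v^\perp$ to transport the resulting contradiction back to $\R^n$.

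Once the flattening lemma is in place, the proof is completed by a Bourgain--Gamburd style iteration: starting from $\nu_0=(\mu*\mu^-)^{(r)}$, each additional multiplicative convolution by $\mu$ shrinks the $L^2$ norm by $\delta^{\epsilon_2}$ until the floor $\delta^{-n/2}$ is reached, which happens after $k=O(n/\epsilon_2)$ steps. Translating back via the Plancherel identity above yields the Fourier bound \eqref{equ:exp2pi} with some $\epsilon_1$ of the order $\epsilon_2/(2r)$, where $r$ is a fixed integer chosen in terms of $\kappa_0$.
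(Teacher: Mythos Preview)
Your overall architecture---Plancherel to pass to $L^2$, an $L^2$-flattening lemma proved by contradiction via Balog--Szemer\'edi--Gowers and a discretized sum-product input, then Bourgain--Gamburd iteration---matches the paper's. The substantive gap is in the sum-product step.

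Your plan is to reduce the $\R^n$ sum-product to Bourgain's one-dimensional theorem by projecting onto a well-chosen direction $v$. This does not work as stated, because coordinatewise multiplication is not compatible with $\pi_v$ for generic $v$: one has $\pi_v(b\cdot a)=\sum_j v_j b_j a_j=\langle (v_1b_1,\dots,v_nb_n),a\rangle$, which is a projection of $a$ onto a \emph{different}, $b$-dependent direction, not a product $\pi_v(b)\,\pi_v(a)$. So $\pi_v(A+bA)$ has no useful relation to $\pi_v(A)+\pi_v(b)\pi_v(A)$, and there is nothing for the 1D sum-product theorem to bite on. If instead you restrict to coordinate directions $v=e_j$, then the multiplication does project, but you only learn about growth of $\pi_j(A)$ in one coordinate; combining these into growth of $A$ itself in $\R^n$ is exactly the hard part, and a fibre decomposition over $e_j^\perp$ does not do it (the fibres are not preserved by the action of $b$ unless $b$ lies in a subalgebra). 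The paper does not attempt such a reduction: it proves a genuinely $n$-dimensional sum-product theorem (Theorem~\ref{prop:sumpro}) by invoking the He--de~Saxc\'e bounded-generation result for linear representations of $(\R^*)^n$ on $\R^n$, together with an energy/Fubini growth lemma under a full ball of multipliers. The crucial hypothesis on the multiplier set is that it be $\delta^\epsilon$-away from the maximal proper unitary subalgebras $\{x^i=x^j\}$; this is where the \emph{general-direction} projective non-concentration \eqref{equ:noncon} is actually used (with $v\propto e_i-e_j$), and it is precisely what rules out the product obstructions $H_1\times\cdots\times H_n$ you identified.

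A second, smaller issue is the endgame. You cannot iterate flattening down to ``the trivial floor $\delta^{-n/2}$'' (and that is not the floor: for a compactly supported probability measure $\|\nu_\delta\|_2$ is bounded below by a constant, not by $\delta^{-n/2}$). The flattening lemma needs $\|\nu_\delta\|_2^2\geq\delta^{-\sigma_1}$ for some fixed $\sigma_1>0$, so the paper iterates only until the exponent $\sigma_{k,r}$ drops below a fixed threshold $\kappa_1=\kappa_0/4$, and then uses a separate elementary H\"older-regularity lemma (one more multiplicative convolution against $\mu$, exploiting \eqref{equ:noncon} directly) to convert the $L^2$ smallness of $(\mu_k*\mu_k^-)^{(r)}$ into pointwise decay of $\hat\mu_{k+1}$ at $|\xi|\sim\delta^{-1}$.
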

	\begin{rem}
		This theorem will be used in \cite{li2018spectralgap} to give a Fourier decay of the Furstenberg measures for algebraic real split Lie groups, similar to Bourgain's result been used in the work of Bourgain and Dyatlov \cite{bourgain2017fourier} for Patterson-Sullivan measures associated to convex-cocompact Fuchsian groups. 
		
		From the Fourier decay of the Furstenberg measures, we can establish an exponential error term in the renewal theorem in the context of products of random matrices, which will also be used in \cite{LS} to obtain a power decay of the Fourier transform of some self-affine measures.
	\end{rem}
	\begin{rem}
		We cannot have a sharper result like $\epsilon_1\geq n/2$, because here we only use the product structure. In $\RR^*$, there exist Borel subgroups which have fractional dimension. (See \cite{erdos1966additive} for example) For a measure supported on a fractional Borel subgroup, the decay rate of Fourier transform is controlled by the Hausdorff dimension of the Borel subgroup. Hence, fractional Borel subgroups are obstacles for large decay rate of Fourier transform.
		
		If we continue to exploit the additive structure, that is to say replacing $\mu_k$ by $\nu=(\mu_k)^{*r}$, the $r$-times additive convolution of $\mu_k$, then the Fourier transform of $\nu$ can have arbitrary large decay rate. 
		
		The Fourier transform detects the additive structure. But our measure $\mu_k$ has the multiplicative structure. The decay of Fourier transform means that the additive and multiplicative structures are hard to coexist, the sum-product philosophy.
	\end{rem}
	The projective non concentration means the projection of the measure $\mu$ on every one dimensional linear subspace $\bb Rv$ satisfies a non concentration assumption (the case of $\RR$).
	
	The case $n=1$ is due to Bourgain \cite[Lemma 8.43]{bourgain2010discretized}. The main ingredient of the proof of Fourier decay is the discretized sum-product estimates in $\bb R^n$. The sum-product estimate roughly says that if the set does not concentrate in small balls, then under addition or multiplication the size of the set will become robustly larger than the initial set. 
	
	For $\delta>0$ and a bounded set $A$ in a metric space $E$, let $\ndelta(A)$ be the minimal number of closed balls of radius $\delta$ needed to cover $A$. In a metric space, we say that a set $A$ is \textbf{$\rho$ away from} a set $B$ if $A$ is not contained in the $\rho$ neighbourhood of $B$, that is, there exists $x$ in $A$ such that $d(x,B)\geq \rho$. In $(\R^*)^n$, we use $\id$ to denote the identity element $(1,\cdots ,1)$. In $\R^n$, we will consider maximal proper unitary subalgebras, such subalgebras are given by $\{x\in\R^n| x^i=x^j  \}$ for $1\leq i<j\leq n$. We say that $A$ is $\rho$ away from proper unitary subalgebras of $\R^n$ if $A$ is $\rho$ away from any maximal proper unitary subalgebra of $\R^n$.
	
	Now we state the discretized sum-product estimates on $\R^n$, which is the main ingredient of the proof of Theorem \ref{thm:sumfourier}.
\begin{thm}\label{prop:sumpro}
	We will consider the action of $(\bb R^*)^n$ on $V=\bb R^n$. The action is given by $gv=(g^1v^1,\dots,g^nv^n)$ for $g$ in $(\R^*)^n$ and $v$ in $V$. There exists a neighbourhood $U$ of the identity in $(\R^*)^n$ such that the following holds. Given $\kappa>0,\sigma\in(0,n)$, there exists $\epsilon>0$ such that for all $\delta>0$ sufficiently small, if $A\subset U$ and $X\subset B_V(0,\delta^{-\epsilon})$ satisfy the following $(\delta,\kappa,\sigma,\epsilon)$ assumption:

	(i)For  $j=1,\dots, n$
	\begin{equation*}
	\forall\rho\geq\delta,\ \nrho(\pi_j(A))\geq\delta^\epsilon\rho^{-\kappa},
	\end{equation*} 
	where $\pi_j$ denotes the projection into $j$-th coordinate,
	
	(ii) $A$ is $\delta^\epsilon$ away from proper unitary subalgebras of $\bb R^n$,
	
	(iii)For $j=1,\dots, n$
	\begin{equation*}
	\forall\rho\geq\delta,\ \nrho(\pi_j(X))\geq\delta^\epsilon\rho^{-\kappa},
	\end{equation*}
	
	(iv)$\ndelta(X)\leq \delta^{-(n-\sigma)-\epsilon}$.
	
	Then
	\begin{equation*}
	\ndelta(X+X)+\sup_{a\in A}\ndelta(X+aX)\geq \delta^{-\epsilon}\ndelta(X).
	\end{equation*}
\end{thm}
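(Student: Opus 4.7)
My plan is to argue by contradiction, following Bourgain's strategy for the one dimensional case \cite[Lemma 8.43]{bourgain2010discretized}, namely converting the failure of sum-product into an approximate ring structure on $X$ which is then destroyed using the hypotheses. Assume the conclusion fails, so $\ndelta(X+X)\leq \delta^{-\epsilon}\ndelta(X)$ and $\sup_{a\in A}\ndelta(X+aX)\leq \delta^{-\epsilon}\ndelta(X)$. The Pl\"unnecke--Ruzsa inequality in the abelian group $(\R^n,+)$ together with the Ruzsa covering lemma, both applied at scale $\delta$, yields that $X$ has small additive doubling, that $aX$ is covered by $\delta^{-O(\epsilon)}$ translates of $X-X$ for every $a\in A$, and more generally that any bounded additive combination $a_1X+\cdots +a_mX$ with $a_i\in A$ lies in $\delta^{-O(\epsilon)}$ translates of $X-X$. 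Setting $H:=X-X$, this makes $H$ approximately invariant under the coordinate-wise action of $A$ and of its iterates.

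The second step is to exploit conditions (i) and (ii) simultaneously. Condition (ii) furnishes at least one element of $A$ that is quantitatively far from every diagonal subalgebra $\{x^i=x^j\}$, and taking ratios $g=a(a')^{-1}$ with $a,a'\in A$ produces multipliers near the identity whose coordinates are pairwise $\delta^{O(\epsilon)}$-separated. Condition (i) then provides a rich family of such ratios, parametrized in each coordinate by a set of non concentration dimension at least $\kappa$. A standard Fubini/pigeonholing argument shows that for each such $g$ one has $\ndelta(gH\cap H)\geq \delta^{O(\epsilon)}\ndelta(H)$.

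The last step is to transport this multi dimensional multiplicative near-invariance of $H$ into the one dimensional setting where Bourgain's theorem applies. I would pick an index $j$ for which $\ndelta(\pi_j(X))$ is as large as possible, which by (iii) is at least $\delta^{-\kappa+O(\epsilon)}$, and use Pl\"unnecke--Ruzsa on the fibres of $\pi_j$ to push the family of multiplicative near-invariances down to $\pi_j(H)\subset\R$. The coordinate separations $|g^i-g^j|\geq\delta^{O(\epsilon)}$ are precisely what guarantees that the descended scalar set of multipliers $\pi_j(g)$ still satisfies a one dimensional non concentration hypothesis of exponent $\kappa'=\kappa'(\kappa)>0$. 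Bourgain's one dimensional discretized sum-product theorem then applies to $\pi_j(X)$ with this family of multipliers and yields a contradiction once $\epsilon=\epsilon(\kappa,\sigma)$ is chosen small enough.

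The main obstacle I foresee is this final descent. It is not obvious a priori that an approximate multiplicative invariance of $H$ in $\R^n$ descends to a useful sum-product statement for the one dimensional set $\pi_j(H)$: one has to do the fibre decomposition carefully, apply Pl\"unnecke--Ruzsa several more times to control the interaction between sums of fibres and multiplication by $g$, and keep all losses polynomial in $\delta$. Condition (ii), which rules out concentration on diagonal subalgebras, is exactly the hypothesis that prevents the reduction from collapsing to a proper coordinate or diagonal subspace where the genuinely $n$ dimensional sum-product phenomenon would be invisible, and condition (iii) is what prevents $X$ from being trapped in a coordinate hyperplane $\{x^j=0\}$ where Bourgain's theorem could not be fed with a non concentrating projection.
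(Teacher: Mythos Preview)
Your overall contradiction setup and the use of Pl\"unnecke--Ruzsa to get approximate invariance of $H=X-X$ under multiplication by $A$ are fine; this part matches the paper. The divergence is in how you finish: you try to descend to one coordinate and invoke Bourgain's one-dimensional theorem, whereas the paper stays in $\R^n$ and uses the He--de~Saxc\'e sum-product theorem (Proposition~\ref{prop:sumrep}, via Proposition~\ref{prop:sumproalg}) to show that $\langle A\rangle_s$ fills a ball $B(0,\delta^{\epsilon_0})$ modulo $\delta$. Combined with the elementary ``approximate ring'' Lemma~\ref{lem:30} this puts the whole ball $B(\id,1/2)$ inside $S_\delta(X,\delta^{-O(\epsilon)})$, and then an averaging/Fubini argument (Lemma~\ref{lem:plusreg}) produces a single $a\in B(\id,1/2)$ with $\ndelta(X+aX)\geq \delta^{-\epsilon_2}\ndelta(X)$, contradicting the assumed failure.

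Your descent step has a real gap, and it is exactly the one you flag. From $\ndelta(X+aX)\leq \delta^{-\epsilon}\ndelta(X)$ you only get
\[
\ndelta\bigl(\pi_j(X)+a^j\pi_j(X)\bigr)=\ndelta\bigl(\pi_j(X+aX)\bigr)\leq \delta^{-\epsilon}\ndelta(X),
\]
and for this to say anything about growth of $\pi_j(X)$ you would need $\ndelta(X)\leq \delta^{-O(\epsilon)}\ndelta(\pi_j(X))$, i.e.\ that the fibres of $\pi_j$ on $X$ are essentially trivial. Nothing in the hypotheses forces this: condition~(iii) gives only a lower bound $\ndelta(\pi_j(X))\geq \delta^{\epsilon-\kappa}$, while (iv) allows $\ndelta(X)$ to be as large as $\delta^{-(n-\sigma)-\epsilon}$, so the ratio can be $\delta^{-(n-\sigma-\kappa)}$, which is huge when $\sigma$ is small. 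In the same vein, Bourgain's one-dimensional theorem needs an upper bound of the form $\ndelta(\pi_j(X))\leq \delta^{-\sigma'}$ with $\sigma'<1$, and (iv) only yields $\ndelta(\pi_j(X))\leq \delta^{-(n-\sigma)-\epsilon}$, which is vacuous unless $\sigma>n-1$. So the projection does not inherit the sum-product hypothesis in any usable form, and ``Pl\"unnecke--Ruzsa on the fibres'' by itself does not repair this: the obstruction is a genuine size mismatch, not a covering issue. The paper circumvents all of this by never projecting: the growth comes from the $n$-dimensional He--de~Saxc\'e theorem together with the energy estimate of Lemma~\ref{lem:plusreg}, which is precisely where condition~(iv) and the coordinate non-concentration~(iii) are balanced against each other.
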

\begin{rem}
	The case $n=1$ is due to Bourgain \cite{bourgain2010discretized}. Compared with \cite[Prop.1]{bourgain_spectral_2012}, our situation does have invariant subspace under the action. Hence we put more regularity on the projection into coordinate subspaces.

	This theorem relies on a recent result of He and de Saxcé \cite{he_sum-product_2018}. See Proposition \ref{prop:sumrep}.
\end{rem}
\begin{rem}
	Roughly speaking, (i) and (iii) mean that the projections of $A,X$ into coordinate subspaces are not concentrated. Assumption (ii) is reasonable since it prevents $A$ from being trapped in a subalgebra.
	
	Compared with the projective non concentration in Theorem \ref{thm:sumfourier}, the assumption here is weaker. In multiplicative convolution, we need additionally that $\mu$ is not trapped in any affine subspace.
\end{rem}
%
\subsection*{Notation}
We will make use of some classic notation: For two real valued functions $A$ and $B$, we write $A=O(B), A\ll B$ or $B\gg A$ if there exists constant $C>0$ such that $|A|\leq CB$, where $C$ only depends on the ambient space. We also write $A\sim B$ if $B\ll A\ll B$. 

We write $A=O_r(B), A\ll_r B$, $B\gg_r A$ and $A\sim_r B$ if the constant $C$ depends on an extra parameter $r>0$.

\section{Discretized sum-product estimates in $\bb R^n$}
The non concentration assumption in Theorem \ref{thm:sumfourier} is a little different from that in \cite{bourgain2010discretized}, but the two assumptions are equivalent up to constants.
\begin{lem}\label{lem:equivalent}
		Let $1>\delta>0$. Let $\nu$ be a Borel probability measure on $\R$. We have two non concentration assumptions. 
		\begin{itemize}
			\item[(1)]$(\delta,\kappa_1,\epsilon_1)$ For $\forall \rho\geq \delta$, we have $\nu(B(a,\rho))\leq \delta^{-\epsilon_1}\rho^{\kappa_1}$.
			\item[(2)]$(\delta,\kappa,\epsilon)$ For $\rho\in[\delta,\delta^{\epsilon} ]$, we have $\nu(B(a,\rho))\leq \rho^{\kappa}$.
		\end{itemize}
		Then $(2)(\delta,\kappa,\epsilon)$ implies $(1)(\delta,\min\{\kappa,1\},\epsilon)$ and if $\kappa_1>2\epsilon_1$, we have that  $(1)(\delta,\kappa_1,\epsilon_1)$ implies $(2)(\delta,\kappa_1/2,2\epsilon_1/\kappa_1)$.
\end{lem}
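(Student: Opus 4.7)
The lemma is essentially a bookkeeping exercise: assumption (1) is a single uniform bound with a $\delta^{-\epsilon_1}$ loss valid down to scale $\delta$, while assumption (2) is a sharper (loss-free) bound but restricted to the band of scales $[\delta,\delta^\epsilon]$. The plan is to handle each implication by a direct calculation, splitting into cases on where $\rho$ lies.

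For the implication (2) $\Rightarrow$ (1), I would fix $\rho\geq\delta$ and split at the threshold $\rho=\delta^\epsilon$. On the band $\rho\in[\delta,\delta^\epsilon]$, assumption (2) gives $\nu(B(a,\rho))\leq\rho^\kappa$, and since $\rho\leq\delta^\epsilon\leq 1$, I would compare exponents to see that $\rho^\kappa\leq\rho^{\min\{\kappa,1\}}\leq\delta^{-\epsilon}\rho^{\min\{\kappa,1\}}$, which is the conclusion of (1). On the complementary range $\rho>\delta^\epsilon$, one simply uses that $\nu$ is a probability measure and notes that $\delta^{-\epsilon}\rho^{\min\{\kappa,1\}}\geq\delta^{-\epsilon}\cdot\delta^{\epsilon\min\{\kappa,1\}}\geq 1$, since $\min\{\kappa,1\}\leq 1$ and $\delta<1$.

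For the implication (1) $\Rightarrow$ (2), the argument is even more direct. Take $\rho\in[\delta,\delta^{2\epsilon_1/\kappa_1}]$; assumption (1) applies and yields $\nu(B(a,\rho))\leq\delta^{-\epsilon_1}\rho^{\kappa_1}$. It remains to compare this against the target $\rho^{\kappa_1/2}$: the inequality $\delta^{-\epsilon_1}\rho^{\kappa_1}\leq\rho^{\kappa_1/2}$ rearranges to $\rho^{\kappa_1/2}\leq\delta^{\epsilon_1}$, i.e., $\rho\leq\delta^{2\epsilon_1/\kappa_1}$, which is precisely the upper endpoint of the range. The hypothesis $\kappa_1>2\epsilon_1$ ensures $2\epsilon_1/\kappa_1<1$, so that $\delta^{2\epsilon_1/\kappa_1}>\delta$ and the band $[\delta,\delta^{2\epsilon_1/\kappa_1}]$ is nonempty; in other words, this hypothesis is exactly what is needed for assumption (2) to be nontrivial with parameter $\epsilon=2\epsilon_1/\kappa_1$.

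There is no real obstacle here, only the need to track constants carefully; the only mildly subtle point is recognizing why the $\min\{\kappa,1\}$ appears in the first implication, which is because (2) controls $\nu$ at scale $\rho\leq 1$ by $\rho^\kappa$, and converting this into an upper bound that degrades gracefully as $\rho\to 1$ forces the exponent to be capped at $1$.
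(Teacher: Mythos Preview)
Your proof is correct and follows essentially the same approach as the paper's: for $(2)\Rightarrow(1)$ you split at $\rho=\delta^\epsilon$ and use the trivial bound $\nu\leq 1$ above the threshold, and for $(1)\Rightarrow(2)$ you directly verify the exponent comparison on the band $[\delta,\delta^{2\epsilon_1/\kappa_1}]$. The paper's write-up is terser (it parametrizes $\rho=\delta^t$ to reduce $(1)\Rightarrow(2)$ to the linear inequality $\epsilon_1-t\kappa_1\leq -t\kappa$ on $t\in[\epsilon,1]$), but the content is identical.
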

\begin{proof}
	$(2)\Rightarrow(1)$ For $\rho<\delta^{\epsilon}$, it is obvious. For $\rho>\delta^{\epsilon}$, we use the trivial bound
	\[\nu(B(a,\rho))\leq 1\leq \delta^{-\epsilon+\epsilon\min\{\kappa,1\}}. \]
	Hence $(2)$ implies that $(1)$ holds for $(\epsilon_1,\kappa_1)=(\epsilon,\min\{\kappa,1 \})$.
	
	$(1)\Rightarrow(2)$ We want to find $(\epsilon,\kappa)$ such that $(2)$ holds. Let $\rho=\delta^{t}$. That means
	\begin{equation*}
	\epsilon_1-t\kappa_1\leq -t\kappa \text{ for }t\in [\epsilon,1].
	\end{equation*}
	By $\kappa_1>2\epsilon_1$, we can take $(\epsilon,\kappa)=(2\epsilon_1/\kappa_1,\kappa_1/2)$.
\end{proof}
The assumption $(2)$ in Lemma \ref{lem:equivalent} is the original definition of Bourgain. This assumption roughly says that the measure $\nu$ has dimension $\kappa$ at scale $\delta$ to scale $\delta^\epsilon$. The assumption $(1)$ is more convenient to be used. The smaller the parameter $\epsilon_1$ is, the more regularity the measure $\nu$ has.

Our result on the discretized sum-product estimates relies on a result of He and de Saxcé. They study sum-product phenomenon in finite dimensional linear representations of Lie groups. We will state the version we need, their theorem is much more general.
\begin{defi}
	Recall that we consider the action of $(\R^*)^n$ on $V=\R^n$ given by multiplication in each coordinate. Let $W$ be a linear subspace of $V$ such that $W$ is not a submodule, that is, there exists $g$ in $(\R^*)^n$ such that $gW\nsubseteq W$. Then we call $Stab_{(\R^*)^n}(W)$ a \textbf{proper stabilizer}.
\end{defi}
Let $A$ be a subset of $(\R^*)^n$ and let $X$ be a subset of $\R^n$. For $s\geq 1$, we define $\l A,X\r_s$ to be the set of elements which can be obtained as sums, differences and products of at most $s$ elements of $A$ and $X$. For example, we have $\l A,X\r_s=\{\pm g_{1,1}\cdots g_{1,i_1}v_1\pm\cdots \pm g_{l,1}\cdots g_{l,i_l}v_l |\ i_1,\cdots ,i_l,l\in \bb N,\ i_1+\cdots i_l+l\leq s  \}$. 
\begin{prop}\cite[Thm.2.3]{he_sum-product_2018}\label{prop:sumrep}
		Recall that we consider the action of $(\R^*)^n$ on $V=\R^n$ given by multiplication in each coordinate. There exists a neighbourhood $U$ of the identity in $(\R^*)^n$ such that the following holds. Given $\epsilon_0,\kappa>0$, there exist $s\geq 1$ and $\epsilon>0$ such that for all $\delta>0$ sufficiently small, if $A\subset U$ and $X\subset B_V(0,1)$ satisfy the following $(\delta,\kappa,\epsilon)$ assumption:
	\begin{itemize}
		\item[(i)] For  $j=1,\dots, n$
		\begin{equation*}
		\forall\rho\geq\delta,\ \nrho(\pi_j(A))\geq\delta^\epsilon\rho^{-\kappa},
		\end{equation*} 
		where $\pi_j$ denotes the projection into $j$-th coordinate,
		
		\item[(ii)] $A$ is $\delta^\epsilon$ away from proper stabilizers,
		
		\item[(iii)]$X$ is $\delta^\epsilon$ away from coordinate subspaces.
	\end{itemize}
	Then,
	\begin{equation*}
	B_V(0,\delta^{\epsilon_0})\subset\l A,X \r_s+B_V(0,\delta).
	\end{equation*}
\end{prop}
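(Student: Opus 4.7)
The plan is to induct on a filtration by the dimension of the smallest linear subspace that approximately contains $\langle A, X\rangle_s$, using Bourgain's one-dimensional discretized sum-product theorem (the $n=1$ case from \cite{bourgain2010discretized}) as the base case. The desired covering conclusion $B_V(0,\delta^{\epsilon_0}) \subset \langle A,X\rangle_s + B_V(0,\delta)$ is equivalent, up to a standard Pl\"unnecke--Ruzsa covering argument, to showing that $\langle A,X\rangle_s$ has $\delta$-box-counting content close to $\delta^{-n}$ inside some ball of radius $\delta^{\epsilon_0}$, together with a non-concentration statement in every affine hyperplane.

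For the base case, hypothesis (iii) produces some $x_0 \in X$ with every coordinate satisfying $|x_0^j| \geq \delta^\epsilon$. For each $j$, the map $g \mapsto \pi_j(g\cdot x_0) = g^j x_0^j$ transports the non-concentration of $\pi_j(A)$ from (i) to a non-concentration of $\pi_j(A x_0)\subset\R$ with only a $\delta^{-\epsilon}$ loss. Bourgain's one-dimensional discretized sum-product theorem, applied to $\pi_j(A\cdot x_0)$ together with $\pi_j(X)$, then produces inside $\pi_j(\langle A,X\rangle_{s_0})$ an interval $[-\delta^{\epsilon_0},\delta^{\epsilon_0}]$ covered at scale $\delta$, for some $s_0$ depending only on $\kappa$ and $\epsilon_0$.

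The inductive step rules out concentration of $\langle A,X\rangle_s$ in a proper linear subspace $W\subsetneq V$. If $W$ is a submodule, it is a coordinate subspace and contradicts (iii). Otherwise $\mathrm{Stab}_{(\R^*)^n}(W)$ is a proper stabilizer, so hypothesis (ii) supplies some $g\in A$ with $d(g,\mathrm{Stab}(W))\geq \delta^\epsilon$; this forces $gW$ to be quantitatively transverse to $W$. Forming $\langle A,X\rangle_s + g\langle A,X\rangle_s$, at the cost of $O(1)$ extra operations, produces a set with strictly larger effective dimension. After at most $n$ such dimensional bootstraps the effective dimension reaches $n$; combined with the coordinate-wise intervals of the base case, this yields a covering of a full-dimensional ball of radius $\delta^{\epsilon_0}$.

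The hard part is the quantitative bookkeeping of the non-concentration exponents across the iteration. Each bootstrap step degrades both the Frostman-type exponent on the projections (through Pl\"unnecke--Ruzsa) and the slackness in the transversality estimate $d(g,\mathrm{Stab}(W))\geq \delta^\epsilon$; one must therefore choose the initial $\epsilon$ small enough and $s$ large enough so that after $O(n)$ rounds of mixed additive and multiplicative operations, enough non-concentration survives to invoke the one-dimensional sum-product theorem on each coordinate projection at the end. Balancing the regularity in (i) against the transversality of (ii), so that both persist throughout a controlled but nontrivial number of sum-product operations, is the technical core of \cite{he_sum-product_2018}.
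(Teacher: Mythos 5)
This proposition is not proved in the paper at all: it is imported verbatim from He--de Saxc\'e (\cite[Thm.~2.3]{he_sum-product_2018}), so the only thing the paper offers is the citation. Your sketch, read as an attempted proof of that theorem, has genuine gaps and does not close the argument. The decisive one is the inductive ``dimension bootstrap'': you assert that once $d(g,\mathrm{Stab}(W))\geq\delta^{\epsilon}$ forces $gW$ to be quantitatively transverse to $W$, the set $\l A,X\r_s+g\l A,X\r_s$ has ``strictly larger effective dimension.'' That implication is precisely the content of the theorem; transversality of one element gives escape in one direction, but turning this into robust growth of covering numbers at every scale, with losses controlled well enough to survive $O(n)$ iterations, is the entire difficulty, and you explicitly defer it (``is the technical core of \cite{he_sum-product_2018}''). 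A proof that outsources its core step to the paper being proved is not a proof.

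Two further points would also need repair even if the bootstrap were supplied. First, your opening reduction is unjustified: having $\delta$-covering number close to $\delta^{-n}$ inside some ball of radius $\delta^{\epsilon_0}$, plus non-concentration near affine hyperplanes, is strictly weaker than the conclusion $B_V(0,\delta^{\epsilon_0})\subset\l A,X\r_s+B_V(0,\delta)$ --- a set can have nearly full covering number and still miss a fixed point by more than $\delta$ (remove a $2\delta$-ball), so Pl\"unnecke--Ruzsa covering alone does not convert ``large'' into ``contains a ball''; one needs the algebraic structure (increasing $s$) in an essential way. Second, hypothesis (iii) only gives, for each $j$, some $x_j\in X$ with $|x_j^j|\geq\delta^{\epsilon}$; it does not produce a single $x_0\in X$ with all coordinates at least $\delta^{\epsilon}$, so the base case as written starts from a false deduction (this one is patchable by working coordinate by coordinate with different witnesses, but as stated it is incorrect).
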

We will use the ring structure of $\R^n$. Recall that for a subset $A$ of $(\R^*)^n$, which is also a subset of $\R^n$, we define $\l A\r_s$ as $\l A,X\r_s$ with $X=A$. As a corollary of Proposition \ref{prop:sumrep}, we have
\begin{prop}\label{prop:sumproalg}
	There exists a neighbourhood $U$ of the identity in $(\R^*)^n$ such that the following holds. 
	Given $\kappa>0, \epsilon_0>0$, there exist $\epsilon>0$ and $s>0$ such that, for $\delta$ sufficiently small, if $A$ is a subset of $U$ satisfies the following $(\delta,\kappa,\epsilon)$ assumption:
	
	(i)For $j=1,\dots, n$
	\begin{equation*}
	\forall\rho\geq\delta,\ \nrho(\pi_j(A))\geq\delta^\epsilon\rho^{-\kappa},
	\end{equation*} 
	where $\pi_j$ denotes the projection into $j$-th coordinate, 
	
	(ii)$A$ is $\delta^\epsilon$ away from maximal proper unitary subalgebras.
	
	Then we have
	\[B_{\R^n}(0,\delta^{\epsilon_0})\subset \l A\r_s+B_{\R^n}(0,\delta). \]
\end{prop}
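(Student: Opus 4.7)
The plan is to apply Proposition \ref{prop:sumrep} with $X = A$ and match the hypotheses. Condition (i) of the corollary is literally condition (i) of the proposition. For condition (iii) of the proposition, that $A$ (used as $X$) be $\delta^\epsilon$ away from coordinate subspaces, note that $A \subset U$ lies in a neighbourhood of $\id = (1, \dots, 1)$: shrinking $U$ once and for all, every $a \in A$ satisfies $|a^k| \geq c$ for all $k$ and some constant $c = c(U) > 0$, so $a$ is at distance at least $c$ from any coordinate subspace. This exceeds $\delta^\epsilon$ for $\delta$ sufficiently small.

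The crux is condition (ii). The corollary asks that $A$ be $\delta^\epsilon$ away from the maximal proper unitary subalgebras $\{x \in \R^n : x^i = x^j\}$, while the proposition asks that $A$ be $\delta^\epsilon$ away from every proper stabilizer $\mathrm{Stab}_{(\R^*)^n}(W)$ of a non-submodule subspace $W$ of $V$. I would establish the structural claim that every proper stabilizer is contained in some $H_{ij} := \{g \in (\R^*)^n : g^i = g^j\}$. Since $H_{ij}$ is precisely the intersection of $(\R^*)^n$ with the maximal proper unitary subalgebra $\{x : x^i = x^j\}$, and since being $\delta^\epsilon$ away from a set implies being $\delta^\epsilon$ away from each of its subsets, condition (ii) of the corollary then yields condition (ii) of the proposition.

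To prove the structural claim, I would decompose $W = V_S \oplus W'$, where $V_S = \bigoplus_{k \in S} \R e_k$ with $S = \{k : \R e_k \subset W\}$, and $W' = W \cap V_{\{1, \dots, n\} \setminus S}$. Since $W$ is not a coordinate subspace, $W' \neq 0$, and every non-zero element of $W'$ has support of size at least $2$. Choose $v \in W'$ with minimal support $J$; a short argument (reducing support by subtracting scalar multiples of $v$) gives $W \cap V_J = \R v$. Then for any $g \in \mathrm{Stab}(W)$, the image $gv = \sum_{k \in J} g^k v^k e_k$ again has support exactly $J$ and lies in $W$, so $gv \in W \cap V_J = \R v$; writing $gv = \lambda v$ forces $g^k = \lambda$ for all $k \in J$, hence $g^i = g^j$ for any $i \neq j$ in $J$. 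This gives $\mathrm{Stab}(W) \subset H_{ij}$ for some such pair.

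With the three hypotheses verified, Proposition \ref{prop:sumrep} applied to $(A, X) = (A, A)$ yields $B_V(0, \delta^{\epsilon_0}) \subset \l A, A \r_s + B_V(0, \delta) = \l A \r_s + B_V(0, \delta)$, which is the desired conclusion. The main obstacle is the structural claim on proper stabilizers in the previous paragraph; once that is in hand, everything else is a direct translation. A minor technicality that I would sweep aside: the proposition asks $X \subset B_V(0, 1)$, whereas $A$ may have norm up to order $\sqrt{n}$, but this is absorbed in a fixed rescaling affecting only the constants.
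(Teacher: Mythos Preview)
Your proof is correct and follows the same route as the paper: apply Proposition~\ref{prop:sumrep} and verify its hypotheses, the crux being that every proper stabilizer lies inside a maximal proper unitary subalgebra. The only notable differences are that the paper takes $X=A-A$ (centred near $0$, so $X\subset B_V(0,1)$ is immediate after shrinking $U$, avoiding the rescaling you mention, at the cost of replacing $s$ by $2s$), and that it argues the structural claim more abstractly by observing that $\mathrm{Stab}_{\R^n}(W)=\{a\in\R^n: aW\subset W\}$ is automatically a unitary subalgebra of $\R^n$, necessarily proper since $W$ is not a submodule; your explicit minimal-support argument reaches the same conclusion.
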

\begin{proof}
	Take $X=A-A$. We can shrink $U$ to ensure that $X\subset U-U\subset B_{\R^n}(0,1)$. Then we claim that $A,X$ satisfies $(\delta,\kappa,2\epsilon/\kappa)$ assumption of Proposition \ref{prop:sumrep}.
	
	Assumption (i) of Proposition \ref{prop:sumrep} is the same as Assumption (i) of this proposition. For assumption (iii) of Proposition \ref{prop:sumrep}, take $\rho=\delta^{2\epsilon/\kappa}$. Then $$\nrho(\pi_j(X))\geq\nrho(\pi_j(A))\geq\delta^\epsilon\rho^{-\kappa}= \delta^{-\epsilon}>1.$$ 
	Hence, $X$ is $\delta^{2\epsilon/\kappa}$ away from coordinate subspaces. The assumption(iii) in Proposition \ref{prop:sumrep} is satisfied.
	
	It remains to verify Assumption (ii) of Proposition \ref{prop:sumrep}. We need to change the point of view. The set $G=(\R^*)^n\subset \R^n$ is seen as subsets of $Aut(V)\subset End(V)$, the automorphism group and the endomorphism ring of $V$. The main point is that in the case of $\R^n$, proper stabilizers are contained in the subalgebras. In other words, let $W$ be a subspace of $V$ which is not a $G$-submodule. Then the proper stabilizer satisfies
	\begin{align*}
	Stab_{G}(W)=G\cap Stab_{\R^n}(W)=G\cap \{a\in \R^n| a\cdot W\subset W \}.
	\end{align*}
	By definition, $Stab_{G}W$ is a proper subgroup of $G$. The fact that $Stab_{\R^n}(W)$ is a unitary subalgebra of $\R^n$ implies that $Stab_{\R^n}(W)$ must be a proper unitary subalgebra of $\R^n$.	Hence, the assumption (ii) of Proposition \ref{prop:sumrep} is automatically satisfied.
	
	Applying Proposition \ref{prop:sumrep} with $\kappa,\epsilon_0$ implies that there exists $s_1$ such that
	\[B_{\R^n}(0,\delta^{\epsilon_0})\subset \l A,X\r_{s_1}+B_{\R^n}(0,\delta), \]
	when $\epsilon$ is small enough. The observation that
	\[\l A,X\r_{s_1}=\l A, A-A\r_{s_1}\subset\l A\r_{2s_1} \]
	implies the result.
\end{proof}

%
\begin{lem}\label{lem:11}
	Let $K>2$ be a roughness constant and let $A$ be a subset of $B_{\R^n}(0,K)$. If 
	\[\ndelta(A+A)+\ndelta(A+A\cdot A)\leq K\ndelta(A), \]
	then for any integer $s$
	\[\ndelta(\l A\r_s)\leq K^{O_s(1)}\ndelta(A). \]
\end{lem}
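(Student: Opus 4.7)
The plan is to reduce the problem to bounding $\ndelta(\sum_{i=1}^l \epsilon_i A^{k_i})$ for each choice of signs $\epsilon_i\in\{\pm 1\}$ and each partition $k_1+\cdots+k_l\leq s$; indeed, $\l A\r_s$ is a union of $O_s(1)$ such sumsets. First, apply Ruzsa's covering lemma to $\ndelta(A+AA)\leq K\ndelta(A)$: there is $Z\subset AA$ with $|Z|\leq K$ such that $AA\subset Z+(A-A)$, whence $\ndelta(AA)\leq K^{O(1)}\ndelta(A)$. Since $A$ sits inside the abelian group $(\R^*)^n$, multiplicative Plünnecke-Ruzsa then gives $\ndelta(A^k)\leq K^{O(k)}\ndelta(A)$ for every $k\geq 1$.

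The crux is to prove $\ndelta(A+A^k)\leq K^{O(k)}\ndelta(A)$ for every $k\geq 1$ by induction on $k$; the cases $k=1,2$ are the hypotheses. As an auxiliary estimate, I first show that for every $c\in A^k$, $\ndelta(A+cA)\leq K^{O(k)}\ndelta(A)$, by a parallel induction on $k$. Factoring $c=ac'$ with $a\in A$ and $c'\in A^{k-1}$, Ruzsa's triangle inequality with intermediate set $c'A$ gives
\[\ndelta(A+cA)\ll \frac{\ndelta(A+c'A)\cdot\ndelta(A+aA)}{\ndelta(A)},\]
where I have used $\ndelta(c'A+cA)=\ndelta(c'(A+aA))=O(\ndelta(A+aA))$ (multiplication by $c'\in U^{k-1}$ is bi-Lipschitz) and $\ndelta(c'A)\gg\ndelta(A)$. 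Since $aA\subset AA$ gives $\ndelta(A+aA)\leq K\ndelta(A)$, and $\ndelta(A+c'A)\leq K^{O(k-1)}\ndelta(A)$ by the inner inductive hypothesis, we conclude $\ndelta(A+cA)\leq K^{O(k)}\ndelta(A)$.

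With the auxiliary estimate in hand, the outer inductive step $k\to k+1$ runs as follows. Apply Ruzsa covering to $\ndelta(A+A^k)\leq K^{O(k)}\ndelta(A)$ to obtain $X_k\subset A^k$ with $|X_k|\leq K^{O(k)}$ and $A^k\subset X_k+(A-A)$; then $A^{k+1}\subset AX_k+AA-AA$, so
\[A+A^{k+1}\subset \bigcup_{x\in X_k}(A+xA+AA-AA).\]
For each $x\in X_k\subset A^k$, the auxiliary estimate gives $\ndelta(A+xA)\leq K^{O(k)}\ndelta(A)$; combining with $\ndelta(A+A),\ndelta(A+AA)\leq K\ndelta(A)$ via the multi-variable additive Plünnecke-Ruzsa inequality (with base $A$ and summand sets $xA, AA, -AA$) yields $\ndelta(A+xA+AA-AA)\leq K^{O(k)}\ndelta(A)$ uniformly in $x$. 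A union bound over the $|X_k|\leq K^{O(k)}$ values of $x$ closes the induction. Finally, a last application of multi-variable Plünnecke-Ruzsa (base $A$, summand sets $A^{k_1},\ldots,A^{k_l}$) gives $\ndelta(\sum_i\epsilon_i A^{k_i})\leq K^{O(\sum k_i)}\ndelta(A)\leq K^{O_s(1)}\ndelta(A)$, as required.

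The main obstacle is the circularity in the outer inductive step: a naive Ruzsa triangle applied directly to $A+A^{k+1}$ fails because multiplicative translates $xA$ for $x\in A^k$ are themselves contained in $A^{k+1}$, precisely the set whose additive doubling with $A$ we are trying to bound. The auxiliary estimate for $\ndelta(A+cA)$ breaks this circle by exploiting the internal factorization $c=ac'$ to recurse through Ruzsa-triangle inequalities involving only strictly lower-degree sets, each of which is then covered by the inductive hypothesis.
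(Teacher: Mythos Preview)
The paper does not prove this lemma; it only cites \cite{he_discretized_2016} and \cite{breuillard2011lectures}. So there is no in-paper argument to compare against, and the comments below concern the correctness of your proposal as a proof of the lemma \emph{as stated}.

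Your overall architecture---reduce to $\ndelta(A+A^k)\leq K^{O(k)}\ndelta(A)$ by induction via Ruzsa covering, driven by an auxiliary bound on $\ndelta(A+cA)$ for $c\in A^k$---is exactly the right one and matches the cited references. But two steps, as written, are not justified under the sole hypothesis $A\subset B_{\R^n}(0,K)$:
\begin{itemize}
\item You invoke ``multiplicative Pl\"unnecke--Ruzsa'' and assert that $A$ sits inside $(\R^*)^n$. The lemma does not assume this (elements of $A$ may have zero coordinates), and more fundamentally $\ndelta$ is a Euclidean covering number, not invariant under coordinatewise multiplication, so Pl\"unnecke--Ruzsa for the multiplicative group does not transfer. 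This step is in any case unnecessary: once $\ndelta(A+A^k)\leq K^{O(k)}\ndelta(A)$ is known, $\ndelta(A^k)\leq\ndelta(A+A^k)$ already gives the product bound.
\item In the auxiliary estimate you use ``multiplication by $c'\in U^{k-1}$ is bi-Lipschitz'' and ``$\ndelta(c'A)\gg\ndelta(A)$''. The lemma never places $A$ inside $U$; an element $c'\in A^{k-1}$ may have arbitrarily small or vanishing coordinates, so neither claim holds and the denominator $\ndelta(c'A)$ in your Ruzsa-triangle step is uncontrolled from below.
\end{itemize}

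The fix avoids the intermediate set $c'A$ entirely. From $\ndelta(A+AA)\leq K\ndelta(A)$, Ruzsa covering yields $AA\subset X+A-A$ with $|X|\ll K$. Writing $c=c'a$ with $a\in A$, one has $aA\subset AA\subset X+A-A$, hence $cA=c'(aA)\subset c'X+c'A-c'A$. Since $c'$ is a \emph{fixed} point, $c'X$ has at most $|X|$ elements, so
\[
\ndelta(A+cA)\leq |X|\,\ndelta(A+c'A-c'A).
\]
By the inductive hypothesis $\ndelta(A+c'A)\leq K^{O(k-1)}\ndelta(A)$ and a second Ruzsa covering $c'A\subset Y+A-A$ with $|Y|\leq K^{O(k-1)}$, one gets $\ndelta(A+c'A-c'A)\leq |Y|^2\,\ndelta(3A-2A)\leq K^{O(k)}\ndelta(A)$ using only \emph{additive} Pl\"unnecke--Ruzsa. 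This closes the induction with no invertibility hypothesis on $c'$, and the rest of your argument then goes through.
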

(See \cite[Lemma 11]{he_discretized_2016} and \cite[Lemma 4.5]{breuillard2011lectures} for more details)
This lemma tells us that instead of proving that $A+A$ or $A+A\cdot A$ is large, it is sufficient to prove that $\l A\r_s$ is substantially large.

As a by-product, using Lemma \ref{lem:11}, we have the following version of discretized sum-product estimates in $\R^n$.
\begin{prop}\label{prop:semisimple}
There exists a neighbourhood $U$ of the identity in $(\R^*)^n$ such that the following holds. Given $\kappa>0,\sigma\in(0,n)$, there exists $\epsilon>0$ such that for all $\delta>0$ sufficiently small, if $A\subset U$ satisfies the following:
	
	(i) For $j=1,\dots, n$
	\begin{equation*}
	\forall\rho\geq\delta,\ \nrho(\pi_j(A))\geq\delta^\epsilon\rho^{-\kappa},
	\end{equation*} 
	where $\pi_j$ denotes the projection into $j$-th coordinate, 
	
	(ii) $A$ is $\delta^\epsilon$ away from proper unitary subalgebras of $\R^n$,
	
	(iii) $\ndelta(A)\leq \delta^{-\sigma-\epsilon}$.
	
	Then
	\begin{equation*}
	\ndelta(A+A)+\ndelta(A+A\cdot A)\geq \delta^{-\epsilon}\ndelta(A).
	\end{equation*}
\end{prop}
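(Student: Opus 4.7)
The plan is to argue by contradiction, pitting the lower bound on $\ndelta(\langle A\rangle_s)$ coming from Proposition \ref{prop:sumproalg} against the Pl\"unnecke-type upper bound supplied by Lemma \ref{lem:11}.

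First I fix the parameters. Since $\sigma<n$ there is a fixed positive gap; I choose $\epsilon_0>0$ so that $n(1-\epsilon_0)>\sigma$, for definiteness $\epsilon_0=(n-\sigma)/(2n)$. Feeding this $\epsilon_0$ together with the given $\kappa$ into Proposition \ref{prop:sumproalg} produces an integer $s\geq 1$ and a threshold $\epsilon'>0$. I will take the $\epsilon>0$ asserted by the conclusion to satisfy $\epsilon\leq\epsilon'$, with the precise smallness fixed at the end of the argument.

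Second, I extract the lower bound. Because $\delta<1$, the hypotheses (i)--(ii) of the current proposition with parameter $\epsilon\leq\epsilon'$ imply the analogous hypotheses of Proposition \ref{prop:sumproalg} with parameter $\epsilon'$ (the inequalities $\delta^\epsilon\geq\delta^{\epsilon'}$ only make the non-concentration and the separation stronger). Proposition \ref{prop:sumproalg} therefore yields
\begin{equation*}
B_{\R^n}(0,\delta^{\epsilon_0})\subset\langle A\rangle_s+B_{\R^n}(0,\delta),
\end{equation*}
and a standard volume count forces $\ndelta(\langle A\rangle_s)\gg\delta^{-n(1-\epsilon_0)}$.

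Third, I suppose for contradiction that $\ndelta(A+A)+\ndelta(A+A\cdot A)<\delta^{-\epsilon}\ndelta(A)$. Since $U$ is a bounded neighbourhood of the identity, $A$ lies in a fixed ball $B_{\R^n}(0,C)$, so the choice $K=\delta^{-\epsilon}$ exceeds $\max(2,C)$ for $\delta$ small and Lemma \ref{lem:11} applies, producing a constant $C_s=O_s(1)$ with
\begin{equation*}
\ndelta(\langle A\rangle_s)\leq K^{C_s}\ndelta(A)\leq\delta^{-C_s\epsilon-\sigma-\epsilon}.
\end{equation*}
Combining the two bounds gives $n(1-\epsilon_0)\leq(C_s+1)\epsilon+\sigma+o(1)$ as $\delta\to 0$. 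Since $n(1-\epsilon_0)-\sigma>0$ is a fixed positive number while $C_s$ is fixed by the choice of $\epsilon_0$ made beforehand, shrinking $\epsilon$ below $(n(1-\epsilon_0)-\sigma)/(2(C_s+1))$ yields the required contradiction.

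The main difficulty is purely one of bookkeeping: the parameters must be introduced in the correct order, namely $\sigma,\kappa\to\epsilon_0\to(s,\epsilon')\to\epsilon$, because $C_s$ depends on $s$ which depends on $\epsilon_0$, and $\epsilon$ must be chosen small against $C_s$. Conceptually the argument is the by-now standard ``Pl\"unnecke plus sum-product-via-representations'' template, so all of the real content has been packaged into Proposition \ref{prop:sumproalg} and Lemma \ref{lem:11}.
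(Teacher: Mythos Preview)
Your proof is correct and follows essentially the same approach as the paper: argue by contradiction, use Lemma~\ref{lem:11} with $K=\delta^{-\epsilon}$ to bound $\ndelta(\langle A\rangle_s)$ above by $\delta^{-O_s(\epsilon)-\sigma}$, use Proposition~\ref{prop:sumproalg} to bound it below by $\delta^{-n(1-\epsilon_0)}$, and then choose $\epsilon_0$ and $\epsilon$ to force a contradiction. Your bookkeeping of the order in which the parameters are fixed is in fact slightly more explicit than the paper's.
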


We deduce Proposition \ref{prop:semisimple} from Lemma \ref{lem:11} and Proposition \ref{prop:sumproalg}. The proof is exactly the same as the proof of \cite[Theorem 2]{he_discretized_2016}. We include its proof for completeness.
\begin{proof}[Proof of Proposition \ref{prop:semisimple}]
	Suppose that the result fails. For every $\epsilon>0$ there exists $A$ satisfying the hypothesis of Proposition \ref{prop:semisimple} but 
	\[\ndelta(A+A)+\ndelta(A+A\cdot A)< \delta^{-\epsilon}\ndelta(A). \]
	We will reach a contradiction when $\epsilon$ is small enough depending only on $\kappa,\sigma$ and $\R^n$.
	
	Then by Lemma \ref{lem:11} and assumption (ii) of Proposition \ref{prop:semisimple}, for every integer $s$, we have
	\begin{equation}\label{equ:ndesl}
	\ndelta(\l A\r_s)\leq \delta^{-O_s(\epsilon)}\ndelta(A)\leq \delta^{-O_s(\epsilon)-\sigma}.
	\end{equation}
	On the other hand, $A$ also satisfies the assumptions of Proposition \ref{prop:sumproalg}. Given $\epsilon_0>0$, there exist $\epsilon_1>0$ and integer $s$ such that if $\epsilon\leq \epsilon_1$, then
	\begin{equation*}
	B_{\R^n}(0,\delta^{\epsilon_0})\subset \l A\r_s+B_{\R^n}(0,\delta).
	\end{equation*}
	Therefore
	\begin{equation}\label{equ:ndesg}
	\ndelta(\l A\r_s)\geq\ndelta(B_{\R^n}(0,\delta^{\epsilon_0}))=\delta^{n(-1+\epsilon_0)}.
	\end{equation}
	If we take $\epsilon_0$ sufficiently small such that $n(1-\epsilon_0)> \sigma$, and take $\epsilon$ sufficiently small such that
	\[n(1-\epsilon_0)>O_s(\epsilon)+\sigma, \]
	then \eqref{equ:ndesl} contradicts \eqref{equ:ndesg}.
\end{proof}
This version is not sufficient to imply the decrease of Fourier transform of multiplicative convolution of measures. We will introduce more tools of additive combinatorics to obtain a stronger form of discretized sum-product estimates.
\subsection{Basics of discretized sets}
Before proving our results, we recall some elementary and known results in the discretized setting. Let $\delta>0$ be the scale. Let $K\geq 2$ be a roughness constant. Two quantities bounded by a polynomial of $K$ is considered as equivalent.
\begin{lem}\label{lem:klip}
	Let $f$ be a $K$-Lipschitz function from $\RR^n$ to $\RR^n$. Let $A$ be a bounded subset of $\RR^n$. We have
	\begin{equation}\label{equ:klip}
		\ndelta(fA)\ll K^n\ndelta(A).
	\end{equation}
\end{lem}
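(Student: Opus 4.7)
The plan is to carry out the standard two-step covering argument. First, I would cover $A$ by $\ndelta(A)$ closed balls of radius $\delta$, say $A\subset \bigcup_{i=1}^{N}B_{\R^n}(x_i,\delta)$ with $N=\ndelta(A)$. Then $fA\subset\bigcup_{i=1}^{N}f(B_{\R^n}(x_i,\delta))$.

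Next, the $K$-Lipschitz property of $f$ says that the image $f(B_{\R^n}(x_i,\delta))$ is contained in the ball $B_{\R^n}(f(x_i),K\delta)$. So it suffices to show that every closed ball of radius $K\delta$ in $\R^n$ can be covered by $O(K^n)$ closed balls of radius $\delta$. This is a routine volume-comparison argument: pick a maximal $\delta$-separated subset inside $B_{\R^n}(f(x_i),K\delta)$; by maximality the $\delta$-balls centred at these points cover $B_{\R^n}(f(x_i),K\delta)$, and by separation the $\delta/2$-balls around them are disjoint and contained in $B_{\R^n}(f(x_i),K\delta+\delta/2)$, so counting volumes gives a bound of order $(2K+1)^n=O(K^n)$ such points.

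Combining the two steps yields
\[
\ndelta(fA)\ \leq\ \sum_{i=1}^{N}\ndelta\bigl(B_{\R^n}(f(x_i),K\delta)\bigr)\ \ll\ K^n\,\ndelta(A),
\]
which is exactly \eqref{equ:klip}. There is no real obstacle; the only point requiring mild care is that the covering number of a $K\delta$-ball by $\delta$-balls is independent of $\delta$ and depends only on the ratio $K$ and the ambient dimension $n$, which is precisely what the volume argument provides.
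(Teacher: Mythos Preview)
Your proof is correct and is exactly the standard argument one would expect. The paper itself does not supply a proof of this lemma; it is stated as an elementary fact in the ``Basics of discretized sets'' subsection and used freely thereafter. Your two-step argument (cover $A$ by $\delta$-balls, use the Lipschitz bound to replace images by $K\delta$-balls, then cover each $K\delta$-ball by $O(K^n)$ balls of radius $\delta$ via a volume/packing count) is the canonical justification, and there is nothing to add.
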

\begin{defi}
	For a bounded subset $A$ of $\RR^n$, we denote by $A^{(\delta)}$ the $\delta$-neighbourhood of $A$, given by 
	\[A^{(\delta)}=\{x\in\RR^n|d(x,A)\leq\delta \}. \]
\end{defi}
\begin{lem}Let $A$ be a bounded subset of $\RR^n$. Let $\tilde{A}$ be a maximal $\delta$-separated subset of $A$, that is, different elements of $\tilde{A}$ have distance at least $\delta$ and $\tilde{A}$ is maximal for inclusion. Then
	\begin{equation}\label{equ:neiequ}
		\ndelta(A)\sim |A^{(\delta)}|\delta^{-n}\sim \#\tilde{A},
	\end{equation}
	where $|A|$ denotes the volume of $A$ and $\#\tilde{A}$ denotes the number of elements of $\tilde{A}$.
\end{lem}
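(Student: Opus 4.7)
The plan is to establish the two claimed equivalences by pairing one standard covering bound with one standard packing bound. First I would show $\#\tilde{A}\sim\ndelta(A)$. Since $\tilde{A}$ is a maximal $\delta$-separated subset of $A$, maximality forces every $x\in A$ to lie within distance $\delta$ of some point of $\tilde{A}$; otherwise $\tilde{A}\cup\{x\}$ would still be $\delta$-separated, contradicting maximality. Hence the closed $\delta$-balls centred on $\tilde{A}$ cover $A$, giving $\ndelta(A)\leq\#\tilde{A}$. For the reverse, fix any cover of $A$ by $\ndelta(A)$ closed balls of radius $\delta$; each such ball has diameter $2\delta$, and by a standard volume comparison (the disjoint open balls of radius $\delta/2$ centred at the points of $\tilde{A}$ inside it must pack into a ball of radius $3\delta/2$) it contains at most a dimensional constant $c_n$ points of $\tilde{A}$. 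Summing over the cover gives $\#\tilde{A}\leq c_n\ndelta(A)$.

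Next I would tie these to $|A^{(\delta)}|\delta^{-n}$. For the upper bound, cover $A$ by $\ndelta(A)$ closed $\delta$-balls and enlarge each to radius $2\delta$; the resulting union contains $A^{(\delta)}$, so $|A^{(\delta)}|\leq\ndelta(A)\,\omega_n(2\delta)^n\ll \ndelta(A)\,\delta^n$, where $\omega_n$ is the volume of the Euclidean unit ball. For the lower bound, the open balls of radius $\delta/2$ centred at the points of $\tilde{A}$ are pairwise disjoint by $\delta$-separation and all contained in $A^{(\delta)}$, so $|A^{(\delta)}|\geq\#\tilde{A}\,\omega_n(\delta/2)^n\gg\#\tilde{A}\,\delta^n$. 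Concatenating these four inequalities yields $\ndelta(A)\sim|A^{(\delta)}|\delta^{-n}\sim\#\tilde{A}$, with implicit constants depending only on the dimension $n$.

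I do not anticipate a serious obstacle — this is a textbook doubling-type argument in Euclidean space. The only point one has to get right is choosing the inflation radii so that the packing and covering bounds line up cleanly: for the lower bound on $|A^{(\delta)}|$ one must use radius at most $\delta/2$ to keep the balls disjoint and inside $A^{(\delta)}$, while for the upper bound one must enlarge the cover to radius $2\delta$ to capture all of $A^{(\delta)}$. After that, everything reduces to the scaling $|B(0,r)|=\omega_n r^n$.
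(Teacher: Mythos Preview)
Your argument is correct and is exactly the standard covering/packing comparison one would expect. The paper itself does not supply a proof of this lemma; it is stated as an elementary fact and used freely thereafter, so there is nothing to compare against beyond noting that your write-up fills in precisely the routine details the author chose to omit.
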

\begin{defi}[Ruzsa distance]
	Let $A, B$ be two bounded subsets of $\RR^n$. We define the Ruzsa distance of $A,B$ at scale $\delta$ by
	\[d_\delta(A,B)=\frac{1}{2}\log\frac{\ndelta(A-B)^2}{\ndelta(A)\ndelta(B)}.\]
	This is not a real distance. It measures the additive structure of $A$ and $B$. 
\end{defi}
\begin{lem}[Ruzsa triangular inequality]\label{lem:ruztri}
	Let $A,B,C$ be three bounded subsets of $\RR^n$. Then
	\begin{equation}\label{equ:ruztri}
		\ndelta(B)\ndelta(A-C)\ll\ndelta(A-B)\ndelta(B-C).
	\end{equation}
\end{lem}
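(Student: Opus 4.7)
The plan is to discretize the classical Ruzsa injection argument at scale $\delta$. By \eqref{equ:neiequ}, $\ndelta(B)$ and $\ndelta(A-C)$ are comparable to the cardinalities of maximal $\delta$-separated subsets, so I pick such $\tilde B\subset B$ and $Y\subset A-C$ satisfying $\#\tilde B\sim \ndelta(B)$ and $\#Y\sim\ndelta(A-C)$. For each $y\in Y$ I fix a representation $y=a(y)-c(y)$ with $a(y)\in A$, $c(y)\in C$, and define
\[\phi : \tilde B \times Y\to (A-B)\times(B-C),\qquad \phi(b,y)=\bigl(a(y)-b,\ b-c(y)\bigr).\]
The image automatically lies in $(A-B)\times(B-C)$, hence can be covered by $O\bigl(\ndelta(A-B)\,\ndelta(B-C)\bigr)$ products of $\delta$-balls.

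The key step is to check that $\phi$ is essentially injective at scale $\delta$, meaning that each such product ball contains the image of at most $O(1)$ pairs. Suppose the two coordinates of $\phi(b,y)$ and $\phi(b',y')$ each lie within $\delta$. The sum of the two coordinates of $\phi(b,y)$ equals $a(y)-c(y)=y$, and similarly for $(b',y')$, so $\|y-y'\|\le 2\delta$; since $Y$ is $\delta$-separated, only $O(1)$ choices of $y$ can occur near a given $y'$. Having fixed $y$ (and hence $a(y)$), the first coordinate $a(y)-b$ of $\phi(b,y)$ determines $b$ up to $O(\delta)$, and the $\delta$-separation of $\tilde B$ then caps the number of such $b$ by $O(1)$.

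Combining these two observations gives $\#\tilde B\cdot\#Y\ll \ndelta(A-B)\,\ndelta(B-C)$, which by \eqref{equ:neiequ} is precisely \eqref{equ:ruztri}. The main obstacle is purely the bookkeeping of dimension-dependent constants arising from the elementary packing fact that only $O(1)$ points of a $\delta$-separated set in $\R^n$ fit inside a $2\delta$-ball; these constants are absorbed in the $\ll$ notation, which depends only on the ambient dimension. An alternative is to work with $\delta$-neighbourhoods and volumes via \eqref{equ:neiequ}, turning the injectivity claim into a Fubini-type volume estimate, but the combinatorial content is identical.
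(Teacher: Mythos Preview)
Your proof is correct and is precisely the standard discretization of the classical Ruzsa injection $(b,y)\mapsto(a(y)-b,\,b-c(y))$, carried out via maximal $\delta$-separated subsets and the packing equivalence \eqref{equ:neiequ}. The paper does not give its own proof of this lemma but refers to \cite{he_orthogonal_2017} and \cite{tao_additive_2006} for exactly this reduction from the discretized to the discrete statement; your argument is the content of those references, so there is nothing to add.
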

The above inequality \eqref{equ:ruztri} is roughly a triangular inequality for the Ruzsa distance $d_\delta$.
\begin{lem}[Pl\"unnecke-Ruzsa inequality]\label{lem:pluruz}
	Let $A,B$ be two bounded subsets of $\RR^n$. If $\ndelta(A+B)\leq K\ndelta(B)$, then for $k,l$ in $\bb N$ we have
	\[\ndelta(kA-lA)\leq O(K)^{k+l}\ndelta(B). \]
\end{lem}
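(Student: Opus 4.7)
The plan is to reduce the discretized Plünnecke--Ruzsa inequality to the classical Plünnecke--Ruzsa inequality in an abelian group by passing to a $\delta$-lattice. Let $G = \delta \bb Z^n$ and let $r \colon \bb R^n \to G$ be the nearest-lattice-point rounding map (breaking ties arbitrarily). Set $A' = r(A)$ and $B' = r(B)$. By \eqref{equ:neiequ}, we have $|A'| \sim \ndelta(A)$ and $|B'| \sim \ndelta(B)$, with constants depending only on $n$.

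First I would transfer the sumset hypothesis to the lattice. Since every element of $A' + B'$ lies within $O(\delta)$ of $A + B$, we have $A' + B' \subset (A + B)^{(O(\delta))}$. Because $A' + B'$ is itself a subset of the scaled lattice $G$, its elements are either equal or $\delta$-separated, hence
$$|A' + B'| \lesssim \ndelta\bigl((A+B)^{(O(\delta))}\bigr) \sim \ndelta(A+B) \leq K \ndelta(B) \sim K |B'|.$$
Thus $A', B'$ are finite subsets of the abelian group $G$ satisfying the classical Plünnecke--Ruzsa hypothesis with constant $O(K)$.

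Next I would invoke the classical Plünnecke--Ruzsa inequality in the torsion-free abelian group $G$ (in the sharp form due to Petridis) to conclude
$$|k A' - l A'| \leq O(K)^{k+l} |B'|.$$
Finally, since $kA - lA \subset (kA' - lA') + B_{\bb R^n}(0, (k+l)\delta)$, covering by $\delta$-balls centred at the at most $|kA' - lA'|$ lattice points of $kA' - lA'$ yields
$$\ndelta(kA - lA) \lesssim |kA' - lA'| \cdot O(1)^{k+l} \leq O(K)^{k+l} \ndelta(B),$$
where the factor $O(1)^{k+l}$ (the covering cost of the $(k+l)\delta$-neighbourhood) is absorbed into $O(K)^{k+l}$ using $K \geq 2$.

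The only nontrivial step is the sumset transfer: verifying that discretizing by rounding inflates the sumset by at most a bounded factor, which amounts to comparing $\ndelta$ of a $c\delta$-neighbourhood with $\ndelta$ of the original set. Everything else is a black-box application of the classical Plünnecke--Ruzsa inequality. Alternatively, one can avoid the discretization step entirely by running Petridis's induction directly against $\ndelta$, using the Ruzsa triangle inequality \eqref{equ:ruztri} at each step; I would use whichever exposition is cleanest.
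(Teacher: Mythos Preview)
Your proposal is correct and follows essentially the same route as the paper: the paper does not give a self-contained proof but refers to \cite{he_orthogonal_2017} for the reduction of the discretized statement to the classical discrete Pl\"unnecke--Ruzsa inequality (in \cite{tao_additive_2006}), which is exactly what your lattice-rounding argument carries out explicitly. The only cosmetic difference is that the paper names the Ruzsa covering lemma as the transfer device, whereas you pass through a maximal $\delta$-separated set / lattice rounding; both are standard and interchangeable here.
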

In \cite{he_orthogonal_2017}, He explains how to deduce the discretized version from the discrete version of the above two lemmas. For the discrete version, please see \cite{tao_additive_2006}. The main ingredient of proof is the Ruzsa covering lemma.
\begin{defi}
	Let $A, B$ be two bounded subsets of $\RR^n$. We define the doubling constant of $A$ at scale $\delta$ by 
	\[\sigma_\delta[A]:=\frac{\ndelta(A+A)}{\ndelta(A)}=\exp(d_\delta(A,-A)). \]
	We write $A\approx_KB$ if $\ndelta(A+B)\leq K\ndelta(A)^{1/2}\ndelta(B)^{1/2}$, which is equivalent to that the Ruzsa distance is small, that is to say $d_\delta(A,-B)\leq \log K$.
\end{defi}
\begin{lem}[Ruzsa calculus]\label{lem:ruzsa}
	Let $A,B,C$ be three bounded subsets of $\RR^n$.  Then
	\begin{itemize}
		\item[(1)] If $A\approx_K B$, then $A\approx_{K^{O(1)}}-B$, $K^{-O(1)}\ndelta(B)\leq \ndelta(A)\leq {K^{O(1)}}\ndelta(B)$ and $\sigma_\delta[A], \sigma_\delta[B]\leq K^{O(1)}$.
		\item[(2)] If $A\approx_KB$ and $B\approx_KC$, then $A\approx_{K^{O(1)}}C$.
		\item[(3)] If $\sigma_\delta[A], \sigma_\delta[B]\leq K$ and $\ndelta(A^{(\delta)}\cap B^{(\delta)})\geq K^{-1}\ndelta(A)^{1/2}\ndelta(B)^{1/2}$, then $A\approx_{K^{O(1)}}B$.
	\end{itemize}
\end{lem}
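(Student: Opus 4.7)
The plan is to derive all three parts from the Ruzsa triangle inequality (Lemma~\ref{lem:ruztri}) and the Pl\"unnecke-Ruzsa inequality (Lemma~\ref{lem:pluruz}), together with the elementary neighbourhood comparison $\ndelta(X+Y^{(\delta)}) \sim \ndelta(X+Y)$ implicit in \eqref{equ:neiequ}. The guiding principle is that $A \approx_K B$ is equivalent to $d_\delta(A,-B) \leq \log K$, and Lemma~\ref{lem:ruztri} says that $d_\delta$ behaves as a pseudo-metric up to absolute constants, so everything can be phrased as an exercise in triangle inequalities.

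For (1), the starting observation is $\ndelta(A+B) \geq \max(\ndelta(A), \ndelta(B))$; combined with the hypothesis this forces $\ndelta(A) \sim_{K^2} \ndelta(B)$ and rewrites the hypothesis in the asymmetric form $\ndelta(A+B) \leq K^{O(1)} \min(\ndelta(A),\ndelta(B))$. Feeding this into Lemma~\ref{lem:pluruz} (applied once with $A$ playing the reference role and once with $B$) yields $\sigma_\delta[A], \sigma_\delta[B] \leq K^{O(1)}$ and bounds for $\ndelta(B+B)$. Applying Lemma~\ref{lem:ruztri} with $-B$ as middle set produces $\ndelta(B)\ndelta(A-B) \ll \ndelta(A+B)\ndelta(B+B)$, and substituting the above bounds yields $\ndelta(A-B) \ll K^{O(1)}\ndelta(A)^{1/2}\ndelta(B)^{1/2}$, i.e., $A \approx_{K^{O(1)}} -B$.

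For (2), apply Lemma~\ref{lem:ruztri} with $B$ as middle set to $\ndelta(A+C) = \ndelta(A-(-C))$, obtaining $\ndelta(B)\ndelta(A+C) \ll \ndelta(A-B)\ndelta(B+C)$. The hypothesis and part (1) give $\ndelta(A-B) \leq K^{O(1)}\ndelta(A)^{1/2}\ndelta(B)^{1/2}$ and $\ndelta(B+C) \leq K\ndelta(B)^{1/2}\ndelta(C)^{1/2}$, so immediately $\ndelta(A+C) \ll K^{O(1)}\ndelta(A)^{1/2}\ndelta(C)^{1/2}$.

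Part (3) is the step with the most content. Set $D = A^{(\delta)} \cap B^{(\delta)}$. By hypothesis $\ndelta(D) \geq K^{-1}\ndelta(A)^{1/2}\ndelta(B)^{1/2}$; conversely, $D \subset A^{(\delta)}$ gives $\ndelta(D) \ll \ndelta(A)$ and symmetrically $\ndelta(D) \ll \ndelta(B)$, so $\ndelta(A), \ndelta(B), \ndelta(D)$ are pairwise $K^{O(1)}$-comparable. The inclusion $D \subset A^{(\delta)}$ combined with \eqref{equ:neiequ} yields $\ndelta(A+D) \ll \ndelta(A+A) \leq K\ndelta(A) \leq K^{O(1)}\ndelta(D)$, and analogously $\ndelta(B+D) \leq K^{O(1)}\ndelta(D)$. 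Lemma~\ref{lem:pluruz} applied to the pair $(A,D)$ then gives $\ndelta(A-A) \leq K^{O(1)}\ndelta(D)$, and since $A - D \subset (A-A)^{(\delta)}$ we lift this to $\ndelta(A-D) \ll K^{O(1)}\ndelta(D)$. Finally Lemma~\ref{lem:ruztri} in the form $\ndelta(D)\ndelta(A+B) \ll \ndelta(A-D)\ndelta(D+B)$ concludes. The main subtlety is the conversion between the symmetric hypotheses on $\sigma_\delta[A], \sigma_\delta[B]$ and the asymmetric doubling input $\ndelta(A+D) \leq K^{O(1)}\ndelta(D)$ required by Lemma~\ref{lem:pluruz}; this is exactly what the size comparison among $\ndelta(A), \ndelta(B), \ndelta(D)$ buys.
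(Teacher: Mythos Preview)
Your proof is correct and follows essentially the same route as the paper's (sketched) argument: all three parts are reduced to the Ruzsa triangle inequality and Pl\"unnecke--Ruzsa, with the key idea in part~(3) being to route through the intersection $D=A^{(\delta)}\cap B^{(\delta)}$ and use $\ndelta(D)\,\ndelta(A+B)\ll\ndelta(A-D)\,\ndelta(D+B)$. The only cosmetic difference is that in part~(1) you obtain the doubling bounds $\sigma_\delta[A],\sigma_\delta[B]\leq K^{O(1)}$ via Pl\"unnecke--Ruzsa, whereas the paper's sketch gets them directly from the triangle inequality $\ndelta(A-A)\ll\ndelta(A+B)^2/\ndelta(B)$; either works.
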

The proofs are direct applications of the Ruzsa triangular inequality and the Pl\"unnecke-Ruzsa inequality. For the discrete version, please see \cite{tao_additive_2006} and the second note of Green in \cite{green2009additive}. The first and second statements mean that the Ruzsa distance is symmetric and transitive. The Ruzsa calculus will be used to prove Proposition \ref{prop:BSG} (Additive-Multiplicative Balog-Szemerédi-Gowers theorem).

%
%
\subsubsection*{The additive energy: the discrete case}
We first introduce the additive energy in the discrete case. Let $A,B$ be two finite sets in an abelian group $G$. We define the additive energy $\omega(+,A\times B)$ as the number of the quadruplet $(a,b,a',b')$ in $A\times B\times A\times B$ such that $a+b=a'+b'$, that is,
\[\omega(+,A\times B)=\#\{(a,b,a',b')\in A\times B\times A\times B| a+b=a'+b' \}. \]
We also have a formulation with $\ell^2$ norm
\begin{equation}\label{equ:omeab}
\omega(+,A\times B)=\|\BB_A*\BB_B\|_2^2 ,
\end{equation}
where the measure in defining $\ell^2$ norm is the counting measure. From the definition, by Young's inequality, we have 
\begin{equation}\label{equ:optimal value}
\omega(+,A\times B)\leq |A|^{3/2}|B|^{3/2},
\end{equation}
where $|A|$ denotes the number of elements in $A$. The additive energy is important because it reflects the additive structure of $A$ and $B$. If $|A+B|\leq K|A|^{1/2}|B|^{1/2}$, then by the Cauchy-Schwarz inequality,
\begin{equation}
\omega(+,A\times B)\geq\frac{|A|^2|B|^2}{|A+B|}\geq K^{-1}|A|^{3/2}|B|^{3/2}, 
\end{equation}
which is robustly large with respect to the optimal value of $\omega(+,A\times B)$ \eqref{equ:optimal value}.
  (See \cite{tao_additive_2006} and \cite{green_sum-product_2009} for more details).
 \subsubsection*{The additive energy: the continuous case}
 We now define the discretized version of the additive energy. On a Cartesian product $X\times Y$ of metric spaces, we use the distance defined by
 \[d((x,y),(x',y'))=\sqrt{d_X^2(x,x')+d_Y^2(y,y')}, \]
 where $x,x'$ are in $X$ and $y,y'$ are in $Y$. 
\begin{defi}[Energy of a map]
	Let $X,Y$ be two metric spaces, and let $\varphi$ be a Lipschitz map from $X$ to $Y$. For a subset $C$ of $X$, the energy of $\varphi$ at scale $\delta$ is defined by
	\begin{equation}
		\omega_\delta(\varphi,C)=\ndelta(\{(a,a')\in C\times C|d(\varphi(a),\varphi(a'))\leq \delta \}).
	\end{equation}
\end{defi}
\begin{lem}\label{lem:12}
	Let $\varphi$ be a $K$-Lipschitz map from $\bb R^m$ to $\bb R^n$, and let $C$ be a bounded subset of $\bb R^m$. Then 
	\begin{itemize}
		\item[(i)] We have
		\begin{equation}
			\ndelta(C)^2\gg \omega_\delta(\varphi,C)\gg_{n,m} \frac{\ndelta(C)^2}{\ndelta(\varphi(C))}.
		\end{equation}
		\item [(ii)] Let $\tilde C$ be a maximal $\delta-$separated subset of $C$. Then
		\begin{equation}
			\omega_\delta(\varphi,C)\ll \#\{(a,a')\in \tilde{C}^2|d(\varphi(a),\varphi(a'))\leq (1+2K)\delta \}.
		\end{equation}
	\end{itemize}
\end{lem}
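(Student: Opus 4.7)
The plan is to treat (i) and (ii) with the same pigeonhole plus Cauchy--Schwarz philosophy as in the discrete case, while carefully converting between covering numbers and maximal separated subsets. Write $E:=\{(a,a')\in C\times C:d(\varphi(a),\varphi(a'))\leq\delta\}$, so that $\omega_\delta(\varphi,C)=\ndelta(E)$.

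For the upper bound in (i), the inclusion $E\subset C\times C$ together with the standard $\ndelta(C\times C)\asymp_m\ndelta(C)^2$ gives the result immediately. For (ii), I would fix a maximal $\delta$-separated $\tilde C\subset C$, which by maximality is automatically a $\delta$-net of $C$. Given $(a,a')\in E$, choose $\tilde a,\tilde a'\in\tilde C$ with $d(a,\tilde a),d(a',\tilde a')\leq\delta$; the $K$-Lipschitz assumption and the triangle inequality then yield $d(\varphi(\tilde a),\varphi(\tilde a'))\leq K\delta+\delta+K\delta=(1+2K)\delta$, so $(\tilde a,\tilde a')$ lies in the discrete set $\tilde E$ on the right-hand side. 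Since each $(a,a')\in E$ is within $\sqrt2\,\delta$ of its chosen $(\tilde a,\tilde a')$, the set $E$ is covered by $O_m(1)$-many $\delta$-balls around each element of $\tilde E$, giving $\ndelta(E)\ll_m \#\tilde E$.

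For the lower bound in (i), set $M:=\ndelta(\varphi(C))$ and partition $\varphi(C)$ into $\lesssim_n M$ pieces of diameter at most $\delta/3$, for instance Voronoi cells of a $\delta/6$-net (using $\mathcal N_{\delta/6}\asymp_n\ndelta$); pull these back to a partition $C=\sqcup_i C_i$. For $a,a'\in C_i$ one has $d(\varphi(a),\varphi(a'))\leq\delta/3$, so $C_i\times C_i\subset E$. Let $\tilde C\subset C$ be a maximal $\delta$-separated subset (so $|\tilde C|\asymp_m\ndelta(C)$) and set $n_i=\#(\tilde C\cap C_i)$. Then $F:=\bigsqcup_i(\tilde C\cap C_i)^2\subset E$ is $\delta$-separated in $\R^{2m}$, and Cauchy--Schwarz yields
\[
|F|=\sum_i n_i^2\geq\frac{(\sum_i n_i)^2}{\#\{i\}}\gg_{n,m}\frac{\ndelta(C)^2}{M}.
\]
Since a $\delta$-ball in $\R^{2m}$ contains only $O_m(1)$ points of a $\delta$-separated set, $\ndelta(E)\gg_m|F|$, which is the claimed bound.

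The only real obstacle is the routine bookkeeping of scales and dimension-dependent constants: the cells must have diameter strictly less than $\delta$ so that preimage pairs land in $E$, yet their number must stay $\lesssim_n M$ in order for Cauchy--Schwarz to produce the $\ndelta(C)^2/M$ lower bound. Once those ratios are fixed, both parts reduce to the elementary identity $\sum_i n_i^2\geq(\sum_i n_i)^2/\#\{i\}$, in direct parallel with the discrete reasoning that gives $\omega(+,A\times B)\geq|A|^2|B|^2/|A+B|$.
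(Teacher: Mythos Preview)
Your proof is correct and follows the standard approach. The paper does not give its own proof of this lemma, deferring instead to \cite[Lemma 12]{he_discretized_2016}; your argument is essentially the same as the one found there: trivial inclusion for the upper bound in (i), the Lipschitz triangle inequality plus a covering argument for (ii), and for the lower bound in (i) a pullback of a partition of $\varphi(C)$ into pieces of small diameter combined with Cauchy--Schwarz on the fibre counts $n_i$. The only points to be careful about are exactly the ones you flagged---keeping the cell diameter strictly below $\delta$ while their number stays $\asymp_n\ndelta(\varphi(C))$, and noting that a $\delta$-separated subset $F\subset E$ forces $\ndelta(E)\gg_m |F|$---and you handle both correctly.
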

(See \cite[Lemma 12]{he_discretized_2016} for more details) When $m=2n$, $C=A\times B\subset\RR^{2n}$ with $A,B$ in $\RR^n$ and $\varphi(a,b)=a+b$, we call $\omega_\delta(+,A\times B)$ the additive energy of $A,B$ at scale $\delta$. We have a formulation with $L^2$ norm (see \cite[Appendix A.1]{boutonnet2017local} for example. This is also the discretized version of \eqref{equ:omeab}.) We have an inequality
\begin{equation}\label{equ:additive energy}
\omega_\delta(+,A\times B)\gg \delta^{-3n}\|\BB_{A}*\BB_{B}\|_2^2.
\end{equation}

Lemma \ref{lem:12} (i) implies that 
\begin{equation}\label{equ:omegg1}
\omega_\delta(+,A\times B)\gg \frac{\ndelta(A\times B)^2}{\ndelta(A+B)}\geq \frac{\ndelta(A)^2\ndelta(B)^2}{\ndelta(A+B)}.
\end{equation}
If $A\approx_KB$, that is, $\ndelta(A+B)\leq K\ndelta(A)^{1/2}\ndelta(B)^{1/2}$, then \eqref{equ:omegg1} implies
\begin{equation}\label{equ:omegg}
	\omega_\delta(+,A\times B)\gg K^{-1}\ndelta(A)^{3/2}\ndelta(B)^{3/2}.
\end{equation}
This means that when two sets $A, B$ have additive structure then the additive energy is relatively large.

The additive energy is powerful when combined with the following proposition, a partial converse to \eqref{equ:omegg}, which says that if two sets have relatively large additive energy, then there exist large subsets which have additive structure.
\begin{prop}[Balog-Szemer\'edi-Gowers]\cite[Theorem 6.10]{tao_product_2006}\label{prop:bsg}
	Let $A,B$ be two bounded subsets of $\bb R^n$ such that 
	\[\omega_\delta(+,A\times B)\geq K^{-1}\ndelta(A)^{3/2}\ndelta(B)^{3/2}. \]
	Then there exist subsets $A', B'$ of $A,B$ such that 
	$$\ndelta(A')\gg_n K^{-O(1)}\ndelta(A),\ \ndelta(B')\gg_n K^{-O(1)}\ndelta(B)$$
	 and 
	\[\ndelta(A'+B')\ll_n K^{O(1)}\ndelta(A')^{1/2}\ndelta(B')^{1/2}. \]
\end{prop}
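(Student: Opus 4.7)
The plan is to reduce the statement to the classical Balog--Szemerédi--Gowers theorem for finite abelian groups. The bridge from the $\delta$-scale continuous energy to a discrete count is provided by Lemma \ref{lem:12}(ii), and the transfer of the resulting sumset bound back to covering numbers is provided by \eqref{equ:neiequ}.

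First I would fix maximal $\delta$-separated subsets $\tilde A\subset A$ and $\tilde B\subset B$, so that $\#\tilde A\sim\ndelta(A)$ and $\#\tilde B\sim\ndelta(B)$ by \eqref{equ:neiequ}. Since the addition map $(x,y)\mapsto x+y$ is $O(1)$-Lipschitz, Lemma \ref{lem:12}(ii) translates the hypothesis $\omega_\delta(+,A\times B)\geq K^{-1}\ndelta(A)^{3/2}\ndelta(B)^{3/2}$ into a discrete near-solution lower bound
\[
\#\{(a,a',b,b')\in\tilde A^2\times\tilde B^2:\|(a+b)-(a'+b')\|\leq C_n\delta\}\gg_n K^{-1}(\#\tilde A)^{3/2}(\#\tilde B)^{3/2}
\]
for some $C_n$ depending only on $n$.

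Next I would remove the ``$\leq C_n\delta$'' slack by choosing a fixed auxiliary scale $\delta'$ a large multiple of $\delta$ and projecting $\tilde A,\tilde B$ to the lattice $\Gamma=\delta'\bb Z^n$ by rounding. Pigeonholing on the lattice cell in which $(a+b)-(a'+b')$ lies, and discarding a $O_n(1)$ fraction of points so that each cell is hit at most $O_n(1)$ times, produces subsets $\bar A,\bar B\subset\Gamma$ of cardinalities $|\bar A|\gg_n\#\tilde A$, $|\bar B|\gg_n\#\tilde B$ whose honest additive energy in the abelian group $\Gamma$ satisfies $\omega(+,\bar A\times\bar B)\gg_n K^{-1}|\bar A|^{3/2}|\bar B|^{3/2}$. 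Applying the classical BSG theorem \cite[Theorem 6.10]{tao_product_2006} inside $\Gamma$ yields subsets $\bar A'\subset\bar A$ and $\bar B'\subset\bar B$ with $|\bar A'|\gg K^{-O(1)}|\bar A|$, $|\bar B'|\gg K^{-O(1)}|\bar B|$, and $|\bar A'+\bar B'|\ll K^{O(1)}|\bar A'|^{1/2}|\bar B'|^{1/2}$.

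Finally I would lift $\bar A',\bar B'$ back to subsets $A'\subset A$ and $B'\subset B$ by picking, in each selected lattice cell, one preimage from $\tilde A$ or $\tilde B$. The lower bounds on $\ndelta(A'),\ndelta(B')$ follow directly from \eqref{equ:neiequ}, and since every $\delta$-ball in $A'+B'$ meets $O_n(1)$ lattice cells of $\bar A'+\bar B'$, the sumset bound in $\Gamma$ gives the announced $\ndelta(A'+B')\ll_n K^{O(1)}\ndelta(A')^{1/2}\ndelta(B')^{1/2}$. The delicate step is the passage between near-equality at scale $\delta$ and exact equality in $\Gamma$: one must verify that the pigeonholing loses only a constant factor depending on $n$, and that the subsets extracted from $\Gamma$ lift to honest subsets of the original $A$ and $B$ without further loss; everything else is bookkeeping between covering numbers and cardinalities of maximal $\delta$-separated nets.
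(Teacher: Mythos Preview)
The paper does not give its own proof of this proposition: it is stated with the citation \cite[Theorem 6.10]{tao_product_2006} and used as a black box. So there is no ``paper's proof'' to compare against; your task was really to supply the standard reduction of the discretized statement to the discrete Balog--Szemer\'edi--Gowers theorem, and your outline does this correctly.

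Your route (pass to maximal $\delta$-separated nets via Lemma \ref{lem:12}(ii), round to a lattice $\Gamma=\delta'\bb Z^n$, apply discrete BSG, lift back) is the standard one and works. Two small technical points are worth tightening. First, since $\tilde A$ is $\delta$-separated, the rounding map to $\delta'\bb Z^n$ is automatically $O_n(1)$-to-one; no ``discarding'' is needed to control multiplicity. Second, your pigeonhole on the cell containing $(a+b)-(a'+b')$ yields many quadruples in $\bar A^2\times\bar B^2$ with $\bar a+\bar b-\bar a'-\bar b'=w$ for some \emph{fixed} $w\in\Gamma$, not with $w=0$. To pass to honest additive energy, note that for any $w$,
\[
\#\{(\bar a,\bar a',\bar b,\bar b'):\bar a+\bar b=\bar a'+\bar b'+w\}=\sum_{x}r(x)r(x-w)\leq\sum_x r(x)^2=\omega(+,\bar A\times\bar B),
\]
where $r=\BB_{\bar A}*\BB_{\bar B}$ and the inequality is Cauchy--Schwarz. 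Hence the genuine energy $\omega(+,\bar A\times\bar B)$ dominates the pigeonholed count, and the discrete BSG hypothesis is verified. With these two clarifications the argument is complete.
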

\subsection{Sum-product estimates in $\RR^n$}
%
%
We first state the discrete version of the growth under a ball to give a flavour of this type of phenomenon and to motivate the continuous version.
\begin{lem}\cite[Lemma 3.1]{green_sum-product_2009}
	Let $p$ be a prime number. If $X$ is a subset of $\bb F_p$, then 
	\[\sup_{a\in\bb F_p}|X+aX|\geq \frac{1}{2}\min\{|X|^2,p \}. \] 
\end{lem}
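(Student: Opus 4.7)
The plan is to use a simple double-counting argument combined with Cauchy--Schwarz, in the spirit of the additive-energy arguments of the previous subsection. I would set $M = \sup_{a \in \bb F_p} |X + aX|$ and consider the quantity
\[ N = \sum_{a \in \bb F_p} \#\bigl\{(x,y,x',y') \in X^4 : x + ay = x' + ay'\bigr\}, \]
which I plan to estimate in two complementary ways.

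First I would compute $N$ exactly by splitting the quadruples according to whether $y = y'$. If $y \neq y'$, the equation rewrites as $a(y-y') = x'-x$ and has a unique solution $a \in \bb F_p$, so each such quadruple contributes exactly once to $N$, and there are $|X|^3(|X|-1)$ of them. If instead $y = y'$, the equation reduces to $x = x'$ independently of $a$, so each of the $|X|^2$ diagonal quadruples is counted for all $p$ values of $a$. This yields
\[ N = |X|^3(|X|-1) + p|X|^2. \]

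Next I would derive a lower bound via Cauchy--Schwarz. For each $a$, writing $r_a(s) = \#\{(x,y) \in X^2 : x + ay = s\}$, the inner count equals $\sum_s r_a(s)^2$. Since $\sum_s r_a(s) = |X|^2$ and $r_a$ is supported in $X + aX$, Cauchy--Schwarz gives $\sum_s r_a(s)^2 \geq |X|^4/|X+aX| \geq |X|^4/M$. Summing over the $p$ values of $a$ yields $N \geq p|X|^4/M$.

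Combining the two estimates and dividing by $|X|^2$ leaves $p|X|^2/M \leq |X|^2 - |X| + p$. A short case split then finishes: when $|X|^2 \leq p$ the right-hand side is at most $2p$, forcing $M \geq |X|^2/2$, while if $|X|^2 > p$ it is at most $2|X|^2$, forcing $M \geq p/2$. Either way $M \geq \tfrac{1}{2}\min\{|X|^2, p\}$. I do not expect any serious obstacle; the whole argument is elementary Cauchy--Schwarz in the spirit of the discretized version used later in the paper, and the only delicate point is tracking the constant $1/2$ through the final case analysis.
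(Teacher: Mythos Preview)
Your proposal is correct and follows essentially the same approach as the paper's sketch: the paper does not give a full proof here (the lemma is quoted from \cite{green_sum-product_2009}) but outlines exactly this strategy of computing $\sum_{a\in\bb F_p}\omega(+,X\times aX)$ in two ways, with the Cauchy--Schwarz lower bound on one side and the direct count on the other. Your write-up simply fills in the details of that sketch, arranged as a direct argument rather than by contradiction.
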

The proof is by calculating the additive energy in two ways. Suppose that the result does not hold, then the additive energy $\omega(+,X\times aX)$ is large for every $a$ in $\bb F_p$. But the sum of the additive energy $\omega(+,X\times aX)$ with respect to $a$ in $\bb F_p$ is small, which gives the contradiction.

The continuous version uses a Fubini type argument to study the growth under a ball in $(\bb R^*)^n$. Recall that $\id=(1,\cdots,1)$ is the identity in $(\R^*)^n$.

\begin{lem}\label{lem:plusreg}
	Given $\kappa>0,\sigma\in(0,n)$, there exists $\epsilon>0$ such that for $\delta$ sufficiently small the following holds. Let $X$ be a bounded subset of $\RR^n$  such that for $j=1,\dots n$
	\begin{equation*}
	\forall\rho>\delta,\ \nrho(\pi_j(X))\geq\delta^\epsilon\rho^{-\kappa}
	\end{equation*}
	and $\ndelta(X)\leq \delta^{-\sigma-\epsilon}$. Then
	\begin{equation*}
	\sup_{a\in B_{\RR^n}(\id,1/2)}\ndelta(X+aX)\geq \delta^{-\epsilon}\ndelta(X).
	\end{equation*}
\end{lem}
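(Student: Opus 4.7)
The plan is to argue by contradiction, via the additive energy combined with a Fubini computation in the $a$-variable. Suppose, for sake of contradiction, that $\ndelta(X+aX)<\delta^{-\epsilon}\ndelta(X)$ for every $a\in B_{\R^n}(\id,1/2)$. By the Cauchy--Schwarz inequality (compare \eqref{equ:omegg}), this forces the additive energy to be large:
\[\omega_\delta(+,X\times aX)\gg\delta^{\epsilon}\ndelta(X)^3\qquad\text{for every }a\in B_{\R^n}(\id,1/2).\]
Integrating over $a$ with respect to Lebesgue measure on $B_{\R^n}(\id,1/2)$ yields the lower bound
\[\int_{B_{\R^n}(\id,1/2)}\omega_\delta(+,X\times aX)\,da\gg\delta^{\epsilon}\ndelta(X)^3.\]

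For an upper bound on the same integral I would invoke Lemma \ref{lem:12}(ii) to rewrite $\omega_\delta(+,X\times aX)$ as (up to constants) the number of quadruples $(x,y,x',y')\in\tilde X^4$ with $|(x-x')+a(y'-y)|\leq C\delta$, where $\tilde X\subset X$ is a maximal $\delta$-separated subset. Exchanging the integration over $a$ and the sum over quadruples, the coordinate-wise constraint $|(x_j-x'_j)-a_j(y'_j-y_j)|\leq C\delta$ pins $a_j$ to an interval of length at most $\min(1,C\delta/|y_j-y'_j|)$, so the volume factors across coordinates. The sum over $x,x'$ contributes a trivial factor $\ndelta(X)^2$, and the resulting estimate is
\[\int_{B_{\R^n}(\id,1/2)}\omega_\delta\,da\ll\ndelta(X)^2\cdot T,\qquad T:=\sum_{y,y'\in\tilde X}\prod_{j=1}^{n}\min\!\Bigl(1,\tfrac{C\delta}{|y_j-y'_j|}\Bigr).\]

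Confronting the two bounds, a contradiction emerges provided $T\ll\delta^{\epsilon'}\ndelta(X)$ for some $\epsilon'>\epsilon$. To prove this, I would decompose dyadically over the scales $|y_j-y'_j|\sim 2^{-k_j}$, writing $T\ll\sum_{\vec k}\delta^n\, 2^{k_1+\cdots+k_n}\,N(\vec k)$, where $N(\vec k)$ counts pairs in $\tilde X^2$ with that dyadic profile. Each $N(\vec k)$ is then to be estimated coordinate by coordinate, using the non-concentration $\nrho(\pi_j(X))\geq\delta^{\epsilon}\rho^{-\kappa}$. The slack $\sigma<n$ between $\ndelta(X)\leq\delta^{-\sigma-\epsilon}$ and the ambient dimension is what creates the genuine power gain: the $n$ projections carry a combined ``dimension budget'' of at least $n\kappa$, and once the per-coordinate spreading is accounted for, the overall sum $T$ drops by a factor $\delta^{\epsilon'}$ below $\ndelta(X)$.

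The main obstacle lies in this last step. The hypothesis gives only a \emph{lower} bound on $\nrho(\pi_j(X))$, whereas a clean bound on $T$ really wants an \emph{upper} bound on the local concentration $\#\{y'\in\tilde X:|y_j-y'_j|\leq\rho\}$. I expect to bridge this by a regularization/pigeonhole step: after passing to a subset of $X$ with $\ndelta$-size comparable to $\ndelta(X)$, each projection $\pi_j(X)$ can be made Frostman regular at dimension close to $\kappa$ on all dyadic scales between $\delta$ and $1$, turning the projective lower bound into the upper bound needed for the pair count. With that regularity available, multiplying the per-coordinate gains and choosing $\epsilon$ small compared with $\kappa$ and $n-\sigma$ produces the required $\delta^{\epsilon'}$ gain in $T$ and closes the contradiction.
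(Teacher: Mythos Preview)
Your overall strategy (additive energy plus Fubini in the $a$-variable) matches the paper's, but there are two issues, one of which is a genuine gap.

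First, the regularization step you worry about is unnecessary. The paper observes that the contradiction hypothesis at the single point $a=\id$, namely $\ndelta(X+X)<\delta^{-\epsilon}\ndelta(X)$, already yields the fiber upper bound you need. From the elementary inequality
\[
\ndelta(X+X)\ \geq\ \nrho(\pi_j(X))\cdot\max_{b\in\R}\ndelta\bigl(X\cap\pi_j^{-1}B_\R(b,\rho)\bigr)
\]
(proved by choosing $2\rho$-separated points in $\pi_j(X)$ and translating a dense fiber), one gets immediately $\ndelta(X\cap\pi_j^{-1}B_\R(b,\rho))\leq\delta^{-2\epsilon}\rho^{\kappa}\ndelta(X)$. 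No passage to a Frostman-regular subset is needed.

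Second, and more seriously, your bound $\int\omega_\delta\,da\ll\ndelta(X)^2\cdot T$ with $T=\sum_{y,y'\in\tilde X}\prod_j\min(1,C\delta/|y_j-y'_j|)$ is too crude to close the argument. The diagonal $y=y'$ alone already gives $T\geq\#\tilde X\sim\ndelta(X)$, so your upper bound is only $\ll\ndelta(X)^3$, which cannot contradict the lower bound $\gg\delta^{\epsilon}\ndelta(X)^3$. The loss comes precisely from summing trivially over $x,x'$. The paper fixes this by splitting on a single threshold $\rho$: if $\min_j|y_j-y'_j|\geq\rho$ then the volume in $a$ is $\ll(\delta/\rho)^n$ and the four sums are bounded trivially, contributing $\ndelta(X)^4\delta^n\rho^{-n}$. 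In the complementary case the key observation is that for fixed $x,y,y'$,
\[
\sum_{x'\in\tilde X}\ \bb P_a\{\|(x-x')+a(y-y')\|\leq C\delta\}\ \ll\ 1,
\]
since for each fixed $a$ only $O(1)$ values of $x'$ satisfy the constraint. This replaces your factor $\ndelta(X)$ by $O(1)$ on the $x'$-sum. Counting bad pairs $(y,y')$ via the fiber bound above and optimizing $\rho=\delta^{(n-\sigma)/(n+\kappa)}$ then gives $\bb E_a[\omega_\delta]\ll\ndelta(X)^3\delta^{\kappa(n-\sigma)/(n+\kappa)-O(\epsilon)}$, which does produce the contradiction for $\epsilon$ small.
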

\begin{rem}
	We follow closely the proof of \cite[Theorem 3]{he_discretized_2016}.
	To prove the stronger version, we need another lemma, which is a reducible version of \cite[Prop.29]{he_discretized_2016}. The proof is essentially the same as the irreducible version, with the estimate of small balls replaced by thin cylinders. 
\end{rem}
\begin{proof}[Proof of Lemma \ref{lem:plusreg}]
	Assume that $\ndelta(X+X)<\delta^{-\epsilon}\ndelta(X)$, otherwise the proof is finished. For $\rho>\delta$ and $j=1,\cdots, n$, we have
	\begin{equation}\label{equ:x+x}
	\ndelta(X+X)\geq\nrho(\pi_j(X))\max_{b\in\bb R}\ndelta(X\cap\pi_j^{-1}B_{\RR}(b,\rho)).
	\end{equation}
	This can be proved by the following standard argument. Choose a maximal subset $\{ c_i\}$ of $X$ such that $\pi_j(c_i)$ is $2\rho$-separated. Fix $b$ in $\RR$. Choose a maximal $\delta$-separated subset $\{d_k\}$ of $X\cap\pi_j^{-1}B_{\RR}(b,\rho)$. If $(i,k)\neq (i',k')$, then
	\[d(c_i+d_k, c_{i'}+d_{k'})\geq\delta. \]
	 Hence $\{c_i+d_k\}_{i,k}$ is a $\delta$-separated subset of $X+X$ and \eqref{equ:x+x} follows.
	
	For all $b$ in $\RR$, by \eqref{equ:x+x} and hypothesis
	\begin{equation}\label{equ:ndpi}
	\ndelta(X\cap\pi_j^{-1}B_{\RR}(b,\rho))\leq\frac{\ndelta(X+X)}{\nrho(\pi_j(X))}\leq \frac{\delta^{-\epsilon}\ndelta(X)}{\delta^\epsilon\rho^{-\kappa}}= \delta^{-2\epsilon}\rho^\kappa\ndelta(X).
	\end{equation}
	
	Let $\mu$ be the normalized Lebesgue measure on $B_{\RR^n}(\id,1/2)$ with total mass $1$, and let $a$ be a random variable following the law of $\mu$. Define $\varphi_a:\bb R^n\times\bb R^n\rightarrow\bb R^n$ by
	\[\varphi_a(x,y)=x+ay. \]
	By Lemma \ref{lem:12} (i),
	\[\ndelta(\varphi_a(X\times X))\gg\frac{\ndelta(X\times X)^2}{\omega_\delta(\varphi_a,X\times X)}, \]
	which is also
	\[\ndelta(X+aX)\gg\frac{\ndelta(X)^4}{\omega_\delta(\varphi_a,X\times X)}. \]
	By the Jensen inequality on the function $t\mapsto \frac{1}{t}$ from $\bb R^+$ to $\bb R^+$,
	\begin{equation}\label{equ:jensen}
	\bb E(\ndelta(X+aX))\gg\frac{\ndelta(X)^4}{\bb E(\omega_\delta(\varphi_a,X\times X))}.
	\end{equation}
	Therefore it is sufficient to give a bound that $\bb E(\omega_\delta(\varphi_a,X\times X))\ll\delta^\epsilon\ndelta(X)^3$.
	
	By Lemma \ref{lem:12} (ii), letting $\tilde{X}$ be a maximal $\delta$-separated subset of $X$, we have
	\begin{equation}\label{equ:energy}
	\begin{split}
	\bb E(\omega_\delta(\varphi_a,X\times X))&\ll\bb E(\#\{(x,x',y,y')\in\tilde{X}^4|\|(x-x')+a(y-y') \|\leq 5\delta \})\\
	&=\sum_{x,x',y,y'\in \tilde{X}}\bb P\{\|a(y-y')+(x-x')\|\leq 5\delta \},
	\end{split}
	\end{equation}
	where $a$ is contained in $B_{\RR^n}(\id,1/2)$ and $K=2$.
	Let $\rho$ be a parameter to be fixed later. We distinguish two cases
	\begin{itemize}
		\item If $\min_j|y^j-(y')^j|\geq\rho$, then
		\begin{equation}\label{equ:mingeq}
		\bb P\{\|a(y-y')+(x-x')\|\leq 5\delta \}\ll\delta^n\rho^{-n}.
		\end{equation}
		\item Otherwise, the number of pairs $(y,y')$ such that $\min_j|y^j-(y')^j|<\rho$ can be bounded using \eqref{equ:ndpi} and \eqref{equ:neiequ}
		\begin{equation}\label{equ:minleq}
		\#\{(y,y')\in \tilde{X}^2|\min_j|y^j-(y')^j|<\rho\}\leq \#\tilde{X}(\sum_j\max_{b\in\bb R}\#\{\tilde{X}\cap\pi_j^{-1}B_{\RR}(b,\rho) \})\ll\delta^{-2\epsilon}\rho^\kappa\ndelta(X)^2. 
		\end{equation}
		Moreover, we have for all $x,y,y'\in\tilde{X}$,
		\begin{equation}\label{equ:xyy'}
		\sum_{x'\in\tilde{X}}\bb P\{\|a(y-y')+(x-x')\|\leq 5\delta \}\ll 1 ,
		\end{equation}
		since for every event, there exists a finite number of $x'$ which satisfies the assumption.
	\end{itemize}
	Therefore combining \eqref{equ:energy} \eqref{equ:mingeq} \eqref{equ:minleq} and \eqref{equ:xyy'}, and taking $\rho=\delta^{\frac{n-\sigma}{n+\kappa}}$,
	\begin{align*}
	\bb E(\omega_\delta(\varphi_a,X\times X))&\ll \ndelta(X)^4\delta^n\rho^{-n}+\ndelta(X)^3\delta^{-2\epsilon}\rho^\kappa\\
	&\ll \ndelta(X)^3(\delta^{n-\sigma-\epsilon}\rho^{-n}+\delta^{-2\epsilon}\rho^\kappa)\\
	&\ll\ndelta(X)^3\delta^{-2\epsilon+\frac{\kappa(n-\sigma)}{n+\kappa}}.
	\end{align*}
	When $\epsilon$ is sufficiently small, we have $\bb E(\omega_\delta(\varphi_a,X\times X))\ll \ndelta(X)^3\delta^{\epsilon}$, which finishes the proof.
\end{proof}
Before proving Theorem \ref{prop:sumpro}, we need to introduce $S_\delta$, the set of ``good elements".
Let $A$ be a bounded subset of $\bb R^n$. Let 
\[S_\delta(A,K)=\{a\in B_{End(\bb R^n)}(0,K)| \ndelta(A+aA)\leq K\ndelta(A) \}. \]
The following lemma says that $S_\delta(A,K)$ has a ``ring structure".
\begin{lem}\label{lem:30}
	Let $A\subset B(0,K)$ be a subset of $\bb R^n$.
	\begin{itemize}
		\item[(i)] If $a$ is in $S_\delta(A,K)$ and $\|a-b\|\leq K\delta$, then $b$ belongs to $S_\delta(A,K^{O(1)})$.
		\item[(ii)] If $\id, a, b$ are in $S_\delta(A,K)$, then $a-b, a+b, ab$ belong to $S_\delta(A,K^{O(1)})$.
		\item[(iii)] Suppose that $a$ is invertible. If $a^{-1}$ is in $B(0,K)$ and $a$ is in $S_\delta(A,K)$, then $a^{-1}$ belongs to $S_\delta(A,K^{O(1)})$.
	\end{itemize}
\end{lem}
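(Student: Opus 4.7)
The plan is a direct Ruzsa-calculus argument for each of~(i)--(iii). For~(i), the hypothesis $\|a - b\| \le K\delta$ combined with $A \subset B(0, K)$ gives $\|ax - bx\| \le K^2 \delta$ for every $x \in A$, so $A + bA$ is contained in the $K^2\delta$-neighborhood of $A + aA$. Covering this neighborhood by $\delta$-balls costs only a factor $O(K^{2n})$, hence $\ndelta(A + bA) \le K^{O(1)} \ndelta(A + aA) \le K^{O(1)} \ndelta(A)$. For~(iii), the identity $A + a^{-1}A = a^{-1}(A + aA)$ combined with Lemma~\ref{lem:klip} applied to the $K$-Lipschitz map $a^{-1}$ immediately gives $\ndelta(A + a^{-1}A) \le K^{O(1)} \ndelta(A + aA) \le K^{O(1)} \ndelta(A)$.

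For the sums in~(ii), I would set $T := A \cup aA \cup bA$ and prove $\ndelta(T + T) \le K^{O(1)} \ndelta(T)$ by bounding each pairwise sum-set $\ndelta(X + Y)$ with $X, Y \in \{A, aA, bA\}$ by $K^{O(1)} \ndelta(A)$. The three hypotheses of~(ii) handle $A+A$, $A+aA$, $A+bA$ directly; Lemma~\ref{lem:klip} handles $aA + aA = a(A+A)$ and $bA + bA = b(A+A)$; and the cross term $aA + bA$ is controlled by the Ruzsa triangle inequality (Lemma~\ref{lem:ruztri}) through the reference set $A$. The Pl\"unnecke--Ruzsa inequality (Lemma~\ref{lem:pluruz}) then yields $\ndelta(kT - \ell T) \le K^{O(1)} \ndelta(A)$ for bounded $k, \ell$. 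Since $A + (a \pm b) A \subseteq T + T \pm T$, this yields $\ndelta(A + (a \pm b) A) \le K^{O(1)} \ndelta(A)$, so $a \pm b \in S_\delta(A, K^{O(1)})$.

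The main obstacle is the product $ab$. A direct Ruzsa triangle argument through the intermediate $aA$, combined with the identity $aA + abA = a(A + bA)$, only gives $\ndelta(A + abA) \le K^{O(1)} \ndelta(A)^2 / \ndelta(aA)$, which is insufficient when $\ndelta(aA) \ll \ndelta(A)$. My approach is to use instead the identity $A + abA = a(a^{-1}A + bA)$, which is valid whenever $a$ is invertible. In the paper's application the elements $a, b$ lie in a small neighborhood of $\id$ in $(\R^*)^n$, so $a^{-1} \in B(0, K)$ is automatic after shrinking the neighborhood; part~(iii) then gives $a^{-1} \in S_\delta(A, K^{O(1)})$, the Ruzsa triangle inequality through $A$ bounds $\ndelta(a^{-1}A + bA) \le K^{O(1)} \ndelta(A)$, and Lemma~\ref{lem:klip} applied to the $K$-Lipschitz map $a$ finishes via $\ndelta(A + abA) = \ndelta(a(a^{-1}A + bA)) \le K^{O(1)} \ndelta(A)$.
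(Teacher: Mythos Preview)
Your arguments for (i), (iii), and the sum/difference part of (ii) are correct and match the standard route (the paper gives no proof of its own, deferring to \cite{he_discretized_2016} and \cite{bourgain2004sum}). The genuine gap is the product $ab$ in~(ii): the lemma does not assume $a$ invertible, so the identity $A + abA = a(a^{-1}A + bA)$ is unavailable in general. Your remark that invertibility holds in the paper's application is essentially right but needs a little more care: in the proof of Theorem~\ref{prop:sumpro} one must show $\langle A\rangle_s \subset S_\delta(X, K^{O_s(1)})$, and by the paper's definition the elements of $\langle A\rangle_s$ are signed sums of products of elements of $A$. Since $A$ lies in a fixed neighborhood $U$ of $\id$, each such product of at most $s$ factors is invertible with inverse bounded by $O_s(1)$, so your product argument applies at every multiplication step; the signed sums are then handled by your $a\pm b$ argument, which needs no invertibility. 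Thus your weakening suffices for the application, but it does not establish~(ii) as stated.

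The fix for the general case is the Ruzsa covering lemma (which the paper cites as the engine behind Lemmas~\ref{lem:ruztri} and~\ref{lem:pluruz}): from $\ndelta(A + bA) \le K\ndelta(A)$ one covers $bA$ by $O(K)$ translates of $(A - A)^{(O(\delta))}$; applying the $K$-Lipschitz map $a$ then covers $abA$ by $O(K)$ translates of $(aA - aA)^{(O(K\delta))}$, and hence $A + abA$ by $O(K)$ translates of $(A + aA - aA)^{(O(K\delta))}$. The remaining bound $\ndelta(A + aA - aA) \le K^{O(1)}\ndelta(A)$ follows from your own $T$-argument applied to $T = A \cup aA$ (the set $A + aA - aA$ sits inside $2T - T$), giving $\ndelta(A + abA) \le K^{O(1)}\ndelta(A)$ with no invertibility assumption on $a$.
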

(See \cite[Lemma 30]{he_discretized_2016} and \cite[Proposition 3.3]{bourgain2004sum} for more details)
\begin{proof}[Proof of Theorem \ref{prop:sumpro}]
	The idea is to use Proposition \ref{prop:sumproalg} to force $A$ to grow to a fat ball. Then Lemma \ref{lem:plusreg} implies the growth of regularity under the action of a fat ball.
	
	Assume that the result fails. that is, for every $\epsilon>0$, there exist $A,X$ satisfying the assumptions of Theorem \ref{prop:sumpro} such that \begin{equation}\label{equ:asubs}
		A\subset S_\delta(X,\delta^{-\epsilon}).
	\end{equation}
	We will reach a contradiction when $\epsilon$ is small enough depending on $\kappa,\sigma$.
	
	By Proposition \ref{prop:sumproalg}, for every $\epsilon_0>0$, there exist $s\in\bb N$ and $\epsilon_1>0$ depending only on $\epsilon_0$ and $\kappa$, such that if $\epsilon<\epsilon_1$ then
	\begin{equation}\label{equ:bdels}
	B_{\RR^n}(0,\delta^{\epsilon_0})\subset\l A\r_s+B_{\RR^n}(0,\delta).
	\end{equation}
	By Lemma \ref{lem:30} (ii) with $K=\delta^{-\epsilon}$ and \eqref{equ:asubs}, we have 
	\begin{equation}\label{equ:ass}
		\l A\r_s\subset S_\delta(X,\delta^{-O(s)\epsilon}).
	\end{equation}
	By Lemma \ref{lem:30} (i) with $K=\delta^{-O(s)\epsilon}$ and \eqref{equ:ass}, \eqref{equ:bdels}
	\begin{equation}\label{equ:Bdele}
	 B_{\RR^n}(0,\delta^{\epsilon_0})\subset S_\delta(X,\delta^{-O(s)\epsilon}).
	\end{equation} 
	By Lemma \ref{lem:30} (iii) with $K=\delta^{-O(s)\epsilon-\epsilon_0}$, $a=\frac{\delta^{\epsilon_0}}{2}\id$ and \eqref{equ:Bdele}, we have
	\begin{equation}\label{equ:2del}
	2\delta^{-\epsilon_0}\id=a^{-1}\in S_\delta(X,\delta^{-O(s)\epsilon-O(\epsilon_0)}).
	\end{equation} 
	Again by Lemma \ref{lem:30} (ii), using product and \eqref{equ:Bdele}, \eqref{equ:2del}, we obtain
	\begin{equation}\label{equ:bid1}
		B_{\RR^n}(\id,1/2)\subset B_{\RR^n}(0,2)=2\delta^{-\epsilon_0}\id\cdot B_{\RR^n}(0,\delta^{\epsilon_0})\subset S_\delta(X,\delta^{-O(s)\epsilon-O(\epsilon_0)}). 
	\end{equation}
	By Lemma \ref{lem:plusreg}, there exists $\epsilon_2>0$ depending only on $\sigma$ and $\kappa$, such that when $\epsilon<\epsilon_2$
	\begin{equation}\label{equ:ainb1}
	\sup_{a\in B_{\RR^n}(\id,1/2)}\ndelta(X+aX)\geq \delta^{-\epsilon_2}\ndelta(X).
	\end{equation}
	
	Taking $\epsilon_0$ sufficiently small, and then taking $\epsilon$ sufficiently small such that $O(s)\epsilon+O(\epsilon_0)<\epsilon_2$, we get a contradiction from \eqref{equ:bid1} \eqref{equ:ainb1}
	\[\delta^{-O(s)\epsilon-O(\epsilon_0)}\ndelta(X)\geq \sup_{a\in B_{\RR^n}(\id,1/2)}\ndelta(X+aX)\geq \delta^{-\epsilon_2}\ndelta(X). \]
	The proof is complete.
\end{proof}
\section{Application to multiplicative convolution of measures}
\label{sec:appmul}

\textbf{Notation}: For a measure $\mu$ on $\bb R^n$, let $\mu^-$ be the symmetry of $\mu$, that is, $\mu^-(E)=\mu(-E)$ for any Borel set $E$ of $\bb R^n$. Let $\mu^{(r)}$ be the $r$-times additive convolution of $\mu$. Recall that $\mu_k$ is the $k$-times multiplicative convolution of $\mu$. For an element $x$ in $\RR^n$, we write $x^j$ for its $j$-th coordinate, $j=1,2,\cdots,n$. Recall that we use the norm induced by the standard scalar product on $\bb R^n$, that is to say for $x\in\RR^n$, $\|x\|=\sqrt{(x^1)^2+\cdots +(x^n)^2}$. All vectors $x,\xi$ in $\bb R^n$ are column vectors. For $y$ in $\R^n$ and measure $\nu$ on $\R^n$, let $(m_y)_*\nu$ be the pushforward measure of $\mu$ by the multiplication action of $y$, that is, $(m_y)_*\nu(E)=\nu(y^{-1}E)$. In order to simplify the notation, we abbreviate $B_{\RR^n}(0,R)$ to $B(0,R)$. For a function $f$ on $\RR^n$, we write $\|f\|_p$, $p=1,2,\infty$, for its $L^p$ norm on $\RR^n$.

	Let $P_\delta=\frac{\BB_{B(0,\delta)}}{|B(0,\delta)|}$, where $|\cdot|$ is the Lebesgue measure of a Borel set in $\RR^n$. Let $\nu_\delta=\nu*P_\delta$, which is an approximation of $\nu$ at scale $\delta$. 
\subsection{$L^2$-flattening}
\begin{lem}[$L^2$-flattening]\label{lem:l2fla}
	Given $\sigma_1$, $\kappa>0$, there exists $\epsilon=\epsilon(\sigma_1,\kappa)>0$ such that the following holds for $\delta$ small enough. Let $\nu$ be a symmetric Borel probability measure on $[-\delta^{-\epsilon},\delta^{-\epsilon}]^n\subset\R^n$. Assume that 
	\[ \|\nu_\delta\|_2^2\geq \delta^{-\sigma_1} \]
	and $\nu$ satisfies $(\delta,\kappa,\epsilon)$ projective non concentration assumption, that is, 
	\begin{align}\label{equ:procon}
		\forall\rho\geq\delta,\quad\sup_{a\in\bb R,v\in\bb S^{n-1}}(\pi_v)_*\nu(B_\RR(a,\rho))=\sup_{a,v}\nu\{x| \l v, x\r\in B_\RR(a,\rho) \}\leq\delta^{-\epsilon} \rho^{\kappa}.
	\end{align}
	Then 
	\begin{equation}\label{equ:intnu}
		\int	\|\nude*(m_y)_*\nude\|_2^2\dd\nu(y)\leq\delta^{\epsilon} \|\nu_\delta\|_2^2.
	\end{equation}

\end{lem}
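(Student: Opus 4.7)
The plan is to argue by contradiction: if \eqref{equ:intnu} fails, I extract a set $A\subset(\R^*)^n$ and a set $X'\subset\R^n$ that jointly violate the discretized sum-product estimate of Theorem \ref{prop:sumpro}. Suppose $\int\|\nude*(m_y)_*\nude\|_2^2\,\dd\nu(y)>\delta^\epsilon\|\nu_\delta\|_2^2$. Young's inequality gives $\|\nude*(m_y)_*\nude\|_2^2\leq\|\nu_\delta\|_2^2$, so a pigeonhole produces a set $Y_0\subset\supp\nu$ with $\nu(Y_0)\gg\delta^{\epsilon}$ on which $\|\nude*(m_y)_*\nude\|_2^2\geq\tfrac12\delta^{\epsilon}\|\nu_\delta\|_2^2$. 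The projective non-concentration \eqref{equ:procon} applied with $v=e_j$ and $v=(e_i-e_j)/\sqrt 2$ lets me discard, at the cost of a $\delta^{O(\epsilon)}$ factor, the subset of $Y_0$ on which some $|y^j|$ or some $|y^i-y^j|$ is less than $\delta^{\epsilon/(2\kappa)}$, and a further dyadic pigeonhole on the multiplicative scales of the coordinates $y^j$ places the remaining set $Y$ in a small multiplicative cell. After absorbing the resulting diagonal rescaling into the set $X'$ produced below (using that rescaling by a diagonal element with coordinates between $\delta^{O(\epsilon)}$ and $\delta^{-\epsilon}$ distorts covering numbers only by a $\delta^{-O(\epsilon)}$ factor), I may assume $Y\subset U$, the neighbourhood of the identity furnished by Theorem \ref{prop:sumpro}.

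Separately, a dyadic decomposition of the values of $\nu_\delta$ and a pigeonhole over $O(|\log\delta|)$ levels yield $t>0$ and a level set $X=\{\nu_\delta\geq t\}$ with $t^2\ndelta(X)\delta^n\gg |\log\delta|^{-1}\|\nu_\delta\|_2^2$ and $t\ndelta(X)\delta^n\ll 1$. Combined with $\|\nu_\delta\|_2^2\geq\delta^{-\sigma_1}$, this yields $\ndelta(X)\leq\delta^{-(n-\sigma_1)-O(\epsilon)}$, verifying hypothesis (iv) of Theorem \ref{prop:sumpro} with $\sigma=\sigma_1$. For each $y\in Y$, the pointwise lower bound $\nu_\delta\geq t\BB_{X^{(\delta)}}$ together with the explicit density of $(m_y)_*\nu_\delta$ gives
\[\nude*(m_y)_*\nude\gg \frac{t^2}{|y^1\cdots y^n|}\,\BB_{X^{(\delta)}}*\BB_{yX^{(\delta)}};\]
since $|y^1\cdots y^n|^{\pm1}\leq\delta^{-O(\epsilon)}$ on $Y$, squaring, integrating, and applying \eqref{equ:additive energy} translate the lower bound on $\|\nude*(m_y)_*\nude\|_2^2$ into the additive-energy bound $\omega_\delta(+,X^{(\delta)}\times yX^{(\delta)})\gg\delta^{O(\epsilon)}\ndelta(X)^3$ for every $y\in Y$.

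Applying Proposition \ref{prop:bsg} to $(X^{(\delta)},yX^{(\delta)})$ for each $y\in Y$, then pigeonholing over $y$ together with the Ruzsa triangle inequality of Lemma \ref{lem:ruztri} and the Ruzsa calculus of Lemma \ref{lem:ruzsa}, I extract a single $X'\subset X$ of size $\ndelta(X')\geq\delta^{O(\epsilon)}\ndelta(X)$ and a subset $A\subset Y$ of $\nu$-mass $\geq\delta^{O(\epsilon)}$ such that
\[\ndelta(X'+yX')\leq\delta^{-O(\epsilon)}\ndelta(X')\qquad\text{for every }y\in A.\]
The projective non-concentration of $\nu$ combined with $\nu(A)\geq\delta^{O(\epsilon)}$ transfers to coordinate non-concentration of $A$ and to $\delta^{O(\epsilon)}$-separation of $A$ from each maximal proper unitary subalgebra $\{x^i=x^j\}$, namely hypotheses (i) and (ii) of Theorem \ref{prop:sumpro}; similarly, the bound $\nu_\delta\geq t\BB_{X^{(\delta)}}$ shows that $(\pi_j)_*\nu_\delta$ is concentrated on $\pi_j(X^{(\delta)})$, so the one-dimensional projective non-concentration of $\nu$ along $v=e_j$ transfers to non-concentration of $\pi_j(X)$, and then to $\pi_j(X')$ via a last pigeonhole, giving hypothesis (iii).

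Theorem \ref{prop:sumpro} then forces $\ndelta(X'+X')+\sup_{a\in A}\ndelta(X'+aX')\geq\delta^{-\epsilon^*}\ndelta(X')$ for some $\epsilon^*=\epsilon^*(\kappa,\sigma_1)>0$ independent of $\epsilon$, contradicting the previous display once $\epsilon$ is chosen small enough that $O(\epsilon)<\epsilon^*$. The main obstacle is the simultaneous bookkeeping of the non-concentration hypotheses through the many refinements: each pigeonhole (on $y$, on coordinate scales, on level sets) and each BSG extraction costs a $\delta^{O(\epsilon)}$ factor in both the $\nu$-mass of $A$ and the size of $X'$, and the argument only closes if all such losses are absorbed by the gain $\epsilon^*$ produced by Theorem \ref{prop:sumpro}.
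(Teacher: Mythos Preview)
There is a genuine gap in the step where you pass from the $L^2$ lower bound on the convolution to an additive-energy lower bound for the level set $X$. The displayed pointwise inequality
\[
\nude*(m_y)_*\nude\ \gg\ \frac{t^2}{|y^1\cdots y^n|}\,\BB_{X^{(\delta)}}*\BB_{yX^{(\delta)}}
\]
goes in the wrong direction for your purpose: squaring and integrating it yields
\[
\|\nude*(m_y)_*\nude\|_2^2\ \gg\ t^4\,\delta^{O(\epsilon)}\,\|\BB_{X^{(\delta)}}*\BB_{yX^{(\delta)}}\|_2^2,
\]
which is a \emph{lower} bound on $\|\nude*(m_y)_*\nude\|_2^2$. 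But you already have a lower bound on this quantity from the contradiction hypothesis; combining two lower bounds on the same object says nothing about $\|\BB_X*\BB_{yX}\|_2^2$, and hence nothing about $\omega_\delta(+,X\times yX)$. To deduce that the additive energy of the level set is large you would need an \emph{upper} bound $\nude\ll t\,\BB_X$, which is false: a single superlevel set $\{\nu_\delta\ge t\}$ controls $\nu_\delta$ from below, not from above.

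The paper repairs exactly this point by using the full dyadic decomposition $\nude\ll\sum_i 2^i\BB_{X_i}$ of Lemma~\ref{lem:2adic} as an \emph{upper} bound. After Cauchy--Schwarz this gives
\[
\|\nude*(m_y)_*\nude\|_2^2\ \ll\ (\log\delta)^2\sum_{i,j}\|2^i\BB_{X_i}*2^j\BB_{yX_j}\|_2^2,
\]
so the lower bound on the left forces, by pigeonhole over the $O(|\log\delta|^2)$ pairs $(i,j)$, some term $\|2^i\BB_{X_i}*2^j\BB_{yX_j}\|_2^2$ to be large; this is what feeds into the additive-energy estimate and then into Proposition~\ref{prop:BSG}. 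Note in particular that two possibly different level sets $X_i,X_j$ appear, which is why the paper needs the asymmetric Additive--Multiplicative Balog--Szemer\'edi--Gowers statement rather than only Proposition~\ref{prop:bsg} plus Ruzsa calculus; your sketch of that extraction (``pigeonholing over $y$ together with the Ruzsa triangle inequality'') is on the right track but is itself the content of Proposition~\ref{prop:BSG} and Lemma~\ref{lem:bb'}, and should not be treated as routine.
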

\begin{rem}
	The first assumption that the $L^2$ norm is not small means that the measure is not too smooth. Indeed, if the measure is already smooth, then the convolution can not make the measure more smooth. This assumption should be compared with the assumption (iv) in Theorem \ref{prop:sumpro}, where we need that the covering number of the set is not too large. 
	
	By definition and \eqref{equ:procon}, $\|\nude\|_2^2\leq \|\nude\|_\infty\|\nude\|_1\leq \delta^{\kappa-\epsilon-n}$. Hence $\kappa+\sigma_1\leq \epsilon+n$, that is, the non concentration assumption gives a upper bound of $L^2$ norm. Another explication of the $L^2$ norm is in Lemma \ref{lem:nudel}.
\end{rem}

\begin{rem}\label{rem:ideal}
	The non concentration assumption here is stronger than the non concentration in Theorem \ref{prop:sumpro}. This is because we need to make multiplication in the proof. 
	The projective non concentration assumption is preserved under multiplication and addition, with possibly different constants. But the non concentration assumption in Theorem \ref{prop:sumpro} is not.

	The hypothesis of projective non concentration can be weakened to (i) non concentration on coordinate subspaces and (ii) away from linear subspaces. Please see Remark \ref{rem:weak}. But the assumption needed in Theorem \ref{thm:sumfourier} is projective non concentration. Hence we write the same assumption here for simplicity. The step where we really need a projective non concentration is explained in Remark \ref{rem:pro}.
\end{rem}
\begin{rem}\label{rem:nu2}
When $n$ equals 1, this is due to Bourgain \cite{bourgain_erdos-volkmann_2003} \cite{bourgain2010discretized}. It roughly says that under multiplicative and additive convolution the H\"older regularity of a measure will increase, that is, given $\kappa>0$ there exists $\epsilon>0$ such that if for all $x$ in $\RR$ and $r>0$, we have $\nu(B(x,r))\leq r^\kappa$, then $\nu *\nu_2(B(x,r))\leq r^{\kappa+\epsilon}$. With this observation, Bourgain gave a quantitative proof of the Erd\"os-Volkmann ring conjecture \cite[Section 4]{bourgain_erdos-volkmann_2003}. 
\end{rem}
Instead of using the original approach in \cite{bourgain_erdos-volkmann_2003} \cite{bourgain2010discretized}, we will follow the approach used for proving $L^2$-flattening in the case of simple Lie groups, using dyadic decomposition to simplify the argument, developed by Bourgain and Gamburd (see \cite{bourgain2008spectral}, \cite{benoist2016spectral}, \cite{boutonnet2017local} for example).
We introduce an approximation by dyadic level sets.
		\begin{defi}
			Let $\{D_i\}_{i\in I}$ be a family of subsets of $\RR^n$. We call $\{D_i\}_{i\in I}$ an essentially disjoint union, if each point $x$ in $\RR^n$ is covered by at most $C$ different $D_i$, where $C$ is a fixed constant only depending on $\RR^n$.
		\end{defi}
		\begin{lem}\cite{lindenstrauss_hausdorff_2015}\cite[Lemma A.4]{boutonnet2017local}\label{lem:2adic}
			Let $\nu$ be a Borel probability measure on $\RR^n$. Let $\cal C$ be a maximal $\delta$-separated set of $\R^n$. Let $\cal C_0=\{x\in\cal C|0< \nu_{2\delta}(x)\leq 1 \}$ and $\cal C_i=\{x\in\cal C|2^{i-1}<\nu_{2\delta}(x)\leq 2^i  \}$ for $i\geq 1$. For $i\geq 0$, let $X_i=\cup_{x\in\cal C_i}B(x,\delta)$. Then $X_i$ is empty if $i\geq O(\log\frac{1}{\delta})$, and we have
			\begin{itemize}
				\item[(1)] 
				$\nude\ll \sum_{i\geq 0}2^i\BB_{X_i}$ and $\sum_{i> 0}2^i\BB_{X_i}\ll \nu_{3\delta}.$
				\item[(2)] $X_i$ is an essentially disjoint union of balls of radius $\delta$, for each $i\geq 0$.
			\end{itemize}
		\end{lem}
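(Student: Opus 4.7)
The plan is to derive all three conclusions directly from the definition $\nu_{r\delta}(y)=\nu(B(y,r\delta))/|B(0,r\delta)|$, combined with the $\delta$-separation and maximality of $\mathcal{C}$, plus a dyadic pigeonhole at the final step.

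First, the emptiness of $X_i$ for $i\geq O(\log(1/\delta))$ is immediate: since $\nu$ is a probability measure, $\nu_{2\delta}(x)\leq 1/|B(0,2\delta)|\asymp\delta^{-n}$, so the condition $\nu_{2\delta}(x)>2^{i-1}$ forces $i\leq O(\log(1/\delta))$. For claim $(2)$, each $\mathcal{C}_i$ inherits $\delta$-separation from $\mathcal{C}$, and a standard volume/packing argument shows that any fixed point $y$ lies in at most $O_n(1)$ of the balls $\{B(x,\delta):x\in\mathcal{C}_i\}$ (since such $x$ are forced into $B(y,\delta)$, which contains a bounded number of $\delta$-separated points); this is exactly the essential disjointness.

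For the upper bound $\nude\ll\sum_{i\geq 0}2^i\BB_{X_i}$, fix $y\in\R^n$. By maximality of $\mathcal{C}$ as a $\delta$-separated set, there exists $x\in\mathcal{C}$ with $d(x,y)\leq\delta$, hence $B(y,\delta)\subset B(x,2\delta)$ and therefore $\nude(y)\leq 2^n\nu_{2\delta}(x)$. If $\nu_{2\delta}(x)>0$, then $x\in\mathcal{C}_i$ for some $i\geq 0$ with $\nu_{2\delta}(x)\leq 2^i$, and $y\in B(x,\delta)\subset X_i$, giving $\nude(y)\leq 2^{n+i}\BB_{X_i}(y)$, which is dominated by $2^n\sum_{j\geq 0}2^j\BB_{X_j}(y)$. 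If instead $\nu_{2\delta}(x)=0$, then $\nu(B(y,\delta))\leq\nu(B(x,2\delta))=0$ so $\nude(y)=0$ trivially.

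For the lower bound $\sum_{i>0}2^i\BB_{X_i}\ll\nu_{3\delta}$, fix $y$ and let $i^*(y)=\max\{i\geq 1:y\in X_i\}$ (both sides vanish if no such $i$ exists). The geometric series estimate $\sum_{i>0}2^i\BB_{X_i}(y)\leq 2^{i^*+1}$ collapses the problem to the top dyadic level. Choose $x\in\mathcal{C}_{i^*}$ with $y\in B(x,\delta)$; then $B(x,2\delta)\subset B(y,3\delta)$, so
\[\nu_{3\delta}(y)=\frac{\nu(B(y,3\delta))}{|B(0,3\delta)|}\geq\frac{|B(0,2\delta)|}{|B(0,3\delta)|}\nu_{2\delta}(x)=(2/3)^n\nu_{2\delta}(x)\gg 2^{i^*},\]
which yields the desired inequality. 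The only conceptual point is this geometric-series collapse to the largest active level: a single $y$ may lie in many distinct $X_i$, yet only the top one contributes up to a constant factor, and that one is controlled by $\nu_{3\delta}(y)$. All remaining steps are routine ball-volume bookkeeping and present no genuine obstacle.
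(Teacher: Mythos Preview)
Your proof is correct. The paper does not supply its own proof of this lemma; it simply cites \cite{lindenstrauss_hausdorff_2015} and \cite[Lemma~A.4]{boutonnet2017local}, so there is nothing to compare against beyond noting that your argument is the standard one and matches what those references do.
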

		\begin{lem}\cite[Lemma A.5]{boutonnet2017local}\label{lem:nu4}
			Let $a>0$ and $\nu$ be a Borel probability measure on $\R^n$. Then
			\[\|\nu_{a\delta}\|_2\ll_a \|\nude\|_2. \]
		\end{lem}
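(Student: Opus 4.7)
The plan is to dominate $P_{a\delta}$ pointwise by a bounded (in $a$) number of translates of $P_\delta$; convolving with $\nu$ then dominates $\nu_{a\delta}$ by translates of $\nude$, and the triangle inequality in $L^2$ together with translation invariance of the norm closes the argument.

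First I would dispose of the easy case $0<a\le 1$. Since $B(0,a\delta)\subset B(0,\delta)$ and $|B(0,a\delta)|=a^n|B(0,\delta)|$, comparing densities gives the pointwise bound $P_{a\delta}\le a^{-n}P_\delta$. Convolving with the probability measure $\nu$ preserves this inequality, so $\nu_{a\delta}\le a^{-n}\nude$ pointwise, and in particular $\|\nu_{a\delta}\|_2\le a^{-n}\|\nude\|_2$.

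For $a>1$, I would cover $B(0,a\delta)$ by $N_a\ll_a 1$ balls of radius $\delta$ (a volume-packing bound gives $N_a\le C(2a)^n$ for some absolute $C$): pick $x_1,\dots,x_{N_a}\in B(0,a\delta)$ so that $B(0,a\delta)\subset\bigcup_{i=1}^{N_a}B(x_i,\delta)$. Then pointwise $P_{a\delta}\le a^{-n}\sum_{i=1}^{N_a}P_\delta(\,\cdot-x_i)$, so $\nu_{a\delta}\le a^{-n}\sum_i\nude(\,\cdot-x_i)$. Taking $L^2$ norms and using that translation is an isometry of $L^2$ yields $\|\nu_{a\delta}\|_2\le N_a\,a^{-n}\|\nude\|_2\ll_a\|\nude\|_2$. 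The argument is entirely elementary, so no serious obstacle arises; the only thing to track is the constants (the ratio $a^{-n}$ when $a\le 1$, and the covering count $N_a$ when $a>1$), both depending only on $a$ and $n$.
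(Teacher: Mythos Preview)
Your argument is correct. The pointwise domination $P_{a\delta}\le a^{-n}\sum_i P_\delta(\cdot-x_i)$ via a covering of $B(0,a\delta)$ by $N_a\ll_a 1$ balls of radius $\delta$ (and the trivial bound when $a\le 1$) is exactly the right idea; convolution with $\nu$ and the triangle inequality in $L^2$ then give the claim with constant $a^{-n}N_a$ depending only on $a$ and $n$.

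Note that the paper does not supply its own proof of this lemma: it is simply quoted from \cite[Lemma A.5]{boutonnet2017local}, so there is nothing in the present paper to compare your approach against. Your covering argument is the standard elementary proof and matches in spirit what one finds in the cited reference.
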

		We also need the following inequality, which is an inverse Chebyshev's inequality. Its proof is elementary.
		\begin{lem}\label{lem: inverse cheb}
			Let $K>0$. Let $\nu$ be a probability measure on a measure space $X$. Let $f$ be a nonnegative function on $X$. If $|f(x)|\leq K\int_X f\dd\nu$ on the support of $\nu$, then 
			\[\nu\left\{x\in X\big|f(x)\geq \frac{1}{2}\int_Xf\dd\nu\right \}\geq \frac{1}{2K}. \]
		\end{lem}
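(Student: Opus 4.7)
The plan is to prove this Paley–Zygmund style reverse Chebyshev bound by splitting the integral $I := \int_X f\,d\nu$ according to whether $f(x)$ is above or below the threshold $I/2$, and then using the two-sided control on $f$: the pointwise upper bound $f \leq KI$ on $\operatorname{supp}\nu$ to estimate the ``large'' part, and the trivial bound $f < I/2$ to estimate the ``small'' part.

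Concretely, writing $E = \{x \in X \mid f(x) \geq I/2\}$, I would decompose
\[
I \;=\; \int_{E} f\,d\nu + \int_{X\setminus E} f\,d\nu.
\]
On $X\setminus E$ the integrand satisfies $f < I/2$, so the second term is strictly less than $(I/2)\nu(X\setminus E) \leq I/2$. On $E$, the hypothesis $f \leq KI$ on $\operatorname{supp}\nu$ yields $\int_E f\,d\nu \leq KI\,\nu(E)$. Combining gives $I \leq I/2 + KI\,\nu(E)$, i.e.\ $\nu(E) \geq 1/(2K)$, which is exactly the claim.

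There is essentially no obstacle; the only thing to watch is the degenerate case $I = 0$. If $\int_X f\,d\nu = 0$ then $f = 0$ $\nu$-a.e., and the inequality $f(x) \geq 0 = I/2$ holds on a set of full measure, so the conclusion is trivially satisfied (and in fact much stronger than $1/(2K)$). With that case dispatched, the argument above is a three-line computation, and no further machinery from the paper is required.
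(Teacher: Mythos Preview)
Your argument is correct; it is exactly the standard Paley--Zygmund/reverse Chebyshev computation. The paper does not actually supply a proof of this lemma---it simply states that ``its proof is elementary''---so there is nothing further to compare.
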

	Here is the main idea of the proof of $L^2$-flattening: Suppose that \eqref{equ:intnu} fails. By \eqref{equ:additive energy}, we can obtain two sets with large additive energy from the convolution of its character function. Hence we can find some sets in the support of $\nude$ with large additive energy. Together with Balog-Szemer\'edi-Gowers theorem (Proposition \ref{prop:bsg}), this produces two sets which violate sum-product estimates (Theorem \ref{prop:sumpro}).
	\begin{proof}[Proof of $L^2$-flattening (Lemma \ref{lem:l2fla})] 
	We follow closely the proof of \cite[Lemma 2.5]{benoist2016spectral}. Proof by contradiction: Assume that the result fails. Then for every $\epsilon>0$, there exist $\delta$ small and a measure $\nu$ satisfying
	\begin{equation}\label{equ:nudey1}
	\int	\|\nude*(m_y)_*\nude\|_2^2\dd\nu(y)>\delta^{\epsilon} \|\nu_\delta\|_2^2.
	\end{equation}
	We will reach a contradiction for $\epsilon$ sufficiently small.
	
	For $y$ in $\R^n$ invertible, let $\det y$ be the determinant of $y$ seen as an endomorphism of $\R^n$, that is to say $\det y=y^1\cdots y^n$. Let 
	\begin{equation}\label{equ:ygg}
		E:=\{y\in\R^n|\ |\det y|> \delta^{3n\epsilon/\kappa}=\delta^{O(\epsilon)} \} .
	\end{equation}
	Here $3n\epsilon/\kappa$ depends on the extra parameter $\kappa$, but $\kappa$ is fixed in the proof, so for simplicity we can use $O(\epsilon)$ instead of $O_\kappa(\epsilon)$.
	The complement of $E$ has small $\nu$ measure. Indeed, if $|\det y|\leq \delta^{3n\epsilon/\kappa}$, then there exists $1\leq j\leq n$ such that $|y^j|\leq \delta^{3\epsilon/\kappa}$. From the projective non concentration with $\rho=\delta^{3\epsilon/\kappa}$, we obtain
	\[\nu\{y\in \R^n||y^j|\leq \delta^{3\epsilon/\kappa} \}\leq \delta^{-\epsilon}(\delta^{3\epsilon/\kappa})^\kappa=\delta^{2\epsilon}. \]
	Hence $ \nu(E^c)\leq n\delta^{2\epsilon}$. Then by Young's inequality,
	\[\int_{E^c}\|\nude*(m_y)_*\nude\|_2^2\dd\nu(y)\leq \int_{E^c} \|\nude\|_2^2\|(m_y)_*\nude \|_1^2\dd\nu=\nu(E^c)\|\nude\|_2^2\leq n\delta^{2\epsilon}\|\nude\|_2^2.  \]
	If $\delta$ is small enough depending on $\epsilon$ and $n$, with \eqref{equ:nudey1} this implies 
	\begin{equation}\label{equ:nudey}
		\int_E	\|\nude*(m_y)_*\nude\|_2^2\dd\nu(y)\geq\delta^{\epsilon}(1-n\delta^\epsilon) \|\nu_\delta\|_2^2\geq \frac{\delta^\epsilon}{2}\|\nude\|_2^2.
	\end{equation}
	
	Lemma \ref{lem:2adic}, \eqref{equ:nudey}, Cauchy-Schwarz's inequality and the definition of $E$ imply
	\begin{align*}
	\delta^\epsilon\|\nude\|_2^2\ll \int_E \|\sum_{i,j}2^i\BB_{X_i}*2^j\BB_{yX_j}\|_2^2\frac{1}{|\det y|^2}\dd\nu(y)\ll (\log\delta)^2\delta^{-O(\epsilon)}\sum_{i,j}\int_E \|2^i\BB_{X_i}*2^j\BB_{yX_j}\|_2^2\dd\nu(y).
	\end{align*}
	There must exist $i,j$ such that
	\begin{equation}\label{equ:bbaiaj}
	Q:=\int_E \|2^i\BB_{X_i}*2^j\BB_{yX_j}\|_2^2\dd\nu(y)\gg \frac{\delta^{O(\epsilon)}}{(\log\delta)^4}\|\nude\|_2^2\gg\delta^{O(\epsilon)}\|\nude\|^2_2\geq \delta^{O(\epsilon)-\sigma_1}.
	\end{equation}
	
	With the same argument as in \cite[Appendix A.2]{boutonnet2017local}, we can conclude that $i,j>0$. If $i=0$, since $\supp \nu\subset[-\delta^{-\epsilon},\delta^{-\epsilon}]^n$, we have a bound on volume, that is, $|X_0|\leq \delta^{-O(\epsilon)}$. If $j>0$, by Lemma \ref{lem:2adic}, then $\|2^j\BB_{X_j}\|_1\ll \|\nu_{3\delta}\|_1=1$. Therefore, for $j\geq 0$ and $\|y\|\leq \delta^{-\epsilon}$, by Young's inequality
	\[ \|\BB_{X_0}*2^j\BB_{yX_j} \|_2\leq \|\BB_{X_0}\|_2\|2^j\BB_{yX_j} \|_1\leq \delta^{-O(\epsilon)},  \]
	which contradicts to \eqref{equ:bbaiaj} if $\epsilon$ is sufficiently small with respect to $\sigma$. Similarly, we obtain $j>0$.
	
	Therefore, Lemma \ref{lem:2adic} implies
	\begin{equation}\label{equ:aiaj}
	\begin{split}
	2^i|X_i&|=\|2^i\BB_{X_i}\|_1\ll \|\nu_{3\delta}\|_1=1,\\
	 2^{2i}|X_i|&=\|2^i\BB_{X_i}\|_2^2\ll \|\nu_{3\delta}\|_2^2\ll\|\nude\|_2^2, \text{ and similarly for }j,
	\end{split}
	\end{equation}
	where the last inequality is due to Lemma \ref{lem:nu4}.
	Hence by Young's inequality, for every $y$ in the support of $\nu$
	\begin{equation}\label{equ:2i2jleq}
		 \|2^i\BB_{X_i}*2^j\BB_{yX_j}\|_2\leq \|2^i\BB_{X_i}\|_1\|2^j\BB_{yX_j}\|_2=2^i|X_i|\|2^j\BB_{X_j}\|_2|\det y|^{1/2}\ll \delta^{-O(\epsilon)}\|\nude\|_2.
	\end{equation}
	
	Then we take a set $B$ such that for every $y$ in $B$ we have that $\|2^i\BB_{X_i}*2^j\BB_{yX_j}\|_2^2$ is relatively large. 
	Let
	\begin{equation}\label{equ:by}
	B=\{y\in E\cap\supp\nu|\ \|2^i\BB_{X_i}*2^j\BB_{yX_j}\|_2^2\geq Q/2 \}.
	\end{equation}
	Using Lemma \ref{lem: inverse cheb} with $f(y)=\|2^i\BB_{X_i}*2^j\BB_{yX_j}\|_2^2$ and \eqref{equ:bbaiaj}, \eqref{equ:2i2jleq} we have
	\begin{align}\label{equ:nuB}
	\nu(B)
	\geq  \frac{Q}{2\sup_{y\in\supp\nu}  f(y)}\gg \delta^{O(\epsilon)}.
	\end{align}
	
	We verify that $X_i,X_j$ and $B$ satisfy some natural assumptions. Take $y$ in $B$. By \eqref{equ:bbaiaj} and Young's inequality, we have
	\begin{equation}\label{equ:young}
	\begin{split}
	\delta^{O(\epsilon)}\|\nude\|_2\ll  \|2^i\BB_{X_i}*2^j\BB_{yX_j}\|_2\leq \|2^i\BB_{X_i}\|_2\|2^j\BB_{yX_j}\|_1&=2^j|X_j|\|2^i\BB_{X_i}\|_2|\det y|.
	\end{split}
	\end{equation}
	By $\|2^j\BB_{X_j}\|_2\ll \|\nude\|_2$ , $|\det y|\leq \delta^{-O(\epsilon)}$ and \eqref{equ:aiaj}, the inequality \eqref{equ:young} implies
	\begin{equation}\label{equ:2jaj}
	2^j|X_j|= \delta^{O(\epsilon)}\text{, and similarly }2^i|X_i|= \delta^{O(\epsilon)}.
	\end{equation}
	Next, \eqref{equ:aiaj} and \eqref{equ:young} also imply
	\begin{equation*}
		\delta^{O(\epsilon)}\|\nude\|_2\ll 2^j|X_j|\|2^i\BB_{X_i}\|_2|\det y|\ll \delta^{-O(\epsilon)}2^i|X_i|^{1/2}\leq \delta^{-O(\epsilon)}2^{i/2}.
	\end{equation*}
	We have
	\begin{equation}\label{equ:2igg}
		2^i\gg \delta^{O(\epsilon)}\|\nude\|_2^2\geq \delta^{-\sigma_1+O(\epsilon)}.
	\end{equation}
	Since $X_i$ is an essentially disjoint union of $\delta$ balls, we have $\ndelta(X_i)\sim\frac{|X_i|}{\delta^n}$ and $\ndelta(X_i\cap\pi_l^{-1}B_\RR(a,\rho))\ll\delta^{-n}|X_i\cap\pi_l^{-1}B_\RR(a,2\rho)|$ for every $\rho\geq\delta$ and $l=1,\cdots, n$ . By \eqref{equ:2jaj} and \eqref{equ:2igg} we have
	\begin{align}\label{equ:aill}
		&\ndelta(X_i)\sim\frac{|X_i|}{\delta^n}=\delta^{O(\epsilon)} 2^{-i}\delta^{-n}\ll \delta^{-n+\sigma_1-O(\epsilon)}.
	\end{align}
	By Lemma \ref{lem:2adic}(1), the projective non concentration and \eqref{equ:aill} for $\rho\geq\delta$, $a\in \R$ and $l=1,\cdots,n$,
	\begin{equation}\label{equ:aipij}
	\begin{split}
		\ndelta(X_i\cap\pi_l^{-1}B_\RR(a,\rho))&\ll\delta^{-n}|X_i\cap\pi_l^{-1}B_\RR(a,2\rho)|\leq \delta^{-n}2^{-i}\nu_{3\delta}(\pi_l^{-1}B_\RR(a,2\rho))\\
		&\ll \ndelta(X_i)\delta^{-O(\epsilon)}\rho^\kappa.
	\end{split}
	\end{equation}
	This means that $X_i$ inherits non concentration from $\nu$.
	
	We calculate additive energy. By \eqref{equ:additive energy} we have 
	\begin{align*}
	\omega_\delta(+,X_i\times yX_j)&\gg \delta^{-3n}\|\BB_{X_i}*\BB_{yX_j}\|_2^2.
	\end{align*}
	Then for every $y$ in $B$, by \eqref{equ:by}, \eqref{equ:aiaj}, \eqref{equ:2jaj} and \eqref{equ:aill}
	\begin{align*}
	\omega_\delta(+,X_i\times yX_j)&\gg\delta^{-3n+O(\epsilon)}\|\nude\|_2^2 2^{-2i-2j}\\
	&\gg \delta^{-3n+O(\epsilon)}2^{-i-j}|X_i|^{1/2}|X_j|^{1/2}\gg \delta^{-3n+O(\epsilon)}|X_i|^{3/2}|X_j|^{3/2}
	\\ &\gg \delta^{O(\epsilon)}\ndelta(X_i)^{3/2}\ndelta(X_j)^{3/2}.
	\end{align*}
	We can use the following proposition, which is a uniform version of the Balog-Szemerédi-Gowers theorem, inspired by the finite field version due to Bourgain.
	\begin{prop}[Additive-Multiplicative Balog-Szemerédi-Gowers theorem]\label{prop:BSG}
		Let $K>2$ be the roughness constant, let $X,X',B$ be bounded subsets of $\bb R^n$ in $B(0,K)$, with $B^{-1}$ bounded by $K$ (if $b\in B$ then $|b_j|\geq 1/K$ for $j=1,\dots n$), and let $\mu$ be a Borel probability measure on $B$. If for every $b\in B$ we have
		\[\omega_\delta(+,X\times bX')\geq\frac{1}{K}\ndelta(X)^{3/2}\ndelta(X')^{3/2}. \]
		Then there exist $X_o\subset X$, $b_o\in B$ and $B_1\subset B\cap B(b_o,1/K^2)$ containing $b_o$ such that $\ndelta(X_o)\geq K^{-O(1)}\ndelta(X)$, $\mu(B_1)\geq K^{-O(1)}$ and for every $b\in b_o^{-1}B_1$
		\[\ndelta(X_o+bX_o)\leq K^{O(1)}\ndelta(X_o). \]
	\end{prop}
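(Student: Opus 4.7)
The plan is to apply the standard Balog-Szemerédi-Gowers theorem (Proposition \ref{prop:bsg}) for each $b \in B$ to obtain $A_b \subset X$ and $C_b \subset X'$ with $\ndelta(A_b) \gg K^{-O(1)}\ndelta(X)$, $\ndelta(C_b) \gg K^{-O(1)}\ndelta(X')$ and $\ndelta(A_b + bC_b) \leq K^{O(1)}\ndelta(A_b)^{1/2}\ndelta(C_b)^{1/2}$. In the notation of Lemma \ref{lem:ruzsa} this reads $A_b \approx_{K^{O(1)}} -bC_b$, and consequently both $A_b$ and $C_b$ have doubling $\leq K^{O(1)}$ (the scaling by $b$ is bi-Lipschitz with constant $K^{O(1)}$ since $b$ and $b^{-1}$ are bounded by $K$ coordinatewise). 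The goal is then to locate one common subset $X_o \subset X$ and one center $b_o$ such that for many $b \in B$ close to $b_o$, the quantity $\ndelta(X_o + b_o^{-1}bX_o)$ is small.

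Next I pigeonhole in two stages. Stage one: cover $B \subset B(0,K)$ by $K^{O(1)}$ balls of radius $1/(2K^2)$ and extract a single ball $B(c,1/(2K^2))$ carrying $\mu$-measure $\gg K^{-O(1)}$; write $B^\circ = B \cap B(c,1/(2K^2))$. Stage two: working on product $\delta$-nets of $X$ and $X'$, apply Cauchy-Schwarz to the incidence function $(y,z,b)\mapsto \BB_{A_b^{(\delta)}}(y)\BB_{C_b^{(\delta)}}(z)$ with $b$ weighted by $\mu|_{B^\circ}$. The lower bound on $\ndelta(A_b)\ndelta(C_b)$ combined with a standard averaging in $b_o$ yields $b_o \in B^\circ$ and $B_1 \subset B^\circ$ with $\mu(B_1) \gg K^{-O(1)}$ such that simultaneously $\ndelta(A_b \cap A_{b_o}^{(\delta)}) \gg K^{-O(1)}\ndelta(X)$ and $\ndelta(C_b \cap C_{b_o}^{(\delta)}) \gg K^{-O(1)}\ndelta(X')$ for all $b \in B_1$. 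Since both $b_o$ and $B_1$ lie in $B(c,1/(2K^2))$, the triangle inequality gives $B_1 \subset B(b_o,1/K^2)$.

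Setting $X_o = A_{b_o}$, Lemma \ref{lem:ruzsa}(3) applied to these large intersections yields $A_b \approx_{K^{O(1)}} X_o$ and $C_b \approx_{K^{O(1)}} C_{b_o}$ for every $b \in B_1$. The closing Ruzsa chain runs as follows: BSG on $b_o$ gives $X_o \approx -b_oC_{b_o}$, which after multiplying by $-b_o^{-1}b$ (bi-Lipschitz with constant $K^{O(1)}$) becomes $b_o^{-1} b X_o \approx -bC_{b_o}$; on the other side, $X_o \approx A_b \approx -b C_b \approx -b C_{b_o}$ by the pigeonhole, BSG, and bi-Lipschitz scaling by $b$. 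Transitivity (Lemma \ref{lem:ruzsa}(2)) yields $X_o \approx_{K^{O(1)}} b_o^{-1}b X_o$, which unpacks to $\ndelta(X_o + b_o^{-1}bX_o) \leq K^{O(1)} \ndelta(X_o)$, using that $\ndelta(b_o^{-1}bX_o) \sim_{K^{O(1)}} \ndelta(X_o)$.

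The delicate step is the joint pigeonhole: a single $b_o$ must simultaneously govern the $A$-family and the $C$-family, which is why Cauchy-Schwarz is performed on the product of $\delta$-nets rather than on each factor separately (a naive pigeonhole on the $A$'s alone would leave the $C_b$'s uncontrolled and break the transitivity chain). The localization to a ball of radius $1/K^2$ is forced by the form of the conclusion; fortunately, since $n$ is fixed and $B \subset B(0,K)$, the covering number of $B(0,K)$ by such balls is only $K^{O(1)}$, so the localization contributes only a polynomial $K$-loss that is absorbed into the other estimates.
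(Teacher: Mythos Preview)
Your proposal is correct and follows essentially the same route as the paper: apply BSG for each $b$, perform a joint pigeonhole on the product $A_b^{(\delta)}\times C_b^{(\delta)}$ (the paper isolates this as a separate lemma with $S=Y\times Y'$, $S_b=Y_b\times Y_b'$, using Cauchy--Schwarz twice, inverse Chebyshev, and Fubini to extract $b_o$ and $B_1$ inside one ball of radius $1/K^2$), and conclude with the same Ruzsa-calculus chain $X_o\approx A_b\approx bC_b\approx bC_{b_o}\approx b_o^{-1}bX_o$. The only cosmetic difference is that you localize to a ball of large $\mu$-mass \emph{before} Cauchy--Schwarz whereas the paper folds that localization into the Cauchy--Schwarz step itself; the two orderings are equivalent up to $K^{O(1)}$ losses.
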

	Take $K=\delta^{-O(\epsilon)}$, $\mu=\frac{1}{\nu(B)}\nu|_B$, $X=X_i$ and $X'=X_j$. By \eqref{equ:ygg}, the set $B$ satisfies the assumption in Proposition \ref{prop:BSG}. Take $B(1,2r)\subset U$ as in Theorem \ref{prop:sumpro} with the group $G=(\bb R^*)^n$, $V=\bb R^n$. Proposition \ref{prop:BSG} implies that  for $\delta$ small enough that $\delta^\epsilon\leq r$ there exist $C_1>0$,
	$$X_o\subset X_i \text{ and }B_1\subset B\cap B(b_o,\delta^\epsilon r)$$ 
	such that 
	\begin{equation}\label{equ:nx1ai}
		\ndelta(X_o)\geq \delta^{C_1\epsilon}\ndelta(X_i),
	\end{equation} 
	\begin{equation}\label{equ:mub0}
	\mu(B_1)\geq \delta^{C_1\epsilon}, \text{}
	\end{equation} 
	and for $b\in b_o^{-1}B_1$,
	\begin{equation}\label{equ:x1bx1}
	\ndelta(X_o+bX_o)\leq \delta^{-C_1\epsilon}\ndelta(X_o).
	\end{equation}
	\begin{lem}\label{lem:b1x1}
		There exists $C_2>0$. These sets $b_o^{-1}B_1,X_o$ satisfy the $(\delta,\kappa,\sigma_1,C_2\epsilon)$ assumption of Theorem \ref{prop:sumpro} when $\delta$ is small enough.
	\end{lem}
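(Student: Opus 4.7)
The goal is to verify the four hypotheses of Theorem \ref{prop:sumpro} for the pair $(A,X)=(b_o^{-1}B_1,X_o)$ with $\sigma=\sigma_1$ and with $C_2\epsilon$ playing the role of $\epsilon$. The constant $C_2$ will be fixed at the end, large enough to absorb every polynomial-in-$\epsilon$ loss accumulated in the earlier steps; these losses depend only on $\kappa$, $\sigma_1$ and $n$. Conditions (iii) and (iv) on $X_o$ follow quickly: since $X_o\subset X_i$, the size bound \eqref{equ:aill} immediately gives (iv), and for (iii) I would combine the non-concentration \eqref{equ:aipij} of $X_i$ with the size lower bound \eqref{equ:nx1ai} through the pigeonhole inequality
\[\ndelta(X_o)\leq \nrho(\pi_l(X_o))\cdot\max_a \ndelta(X_o\cap \pi_l^{-1}B_\R(a,\rho)),\]
using $\ndelta(X_o\cap\pi_l^{-1}B_\R(a,\rho))\leq\ndelta(X_i\cap\pi_l^{-1}B_\R(a,\rho))$ to import the non-concentration from $X_i$ to $X_o$.

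For condition (i), the key point is that multiplication by $b_o^{-1}$ is coordinate-wise, so $\pi_l(b_o^{-1}B_1)=\pi_l(B_1)/b_o^l$ and hence $\nrho(\pi_l(b_o^{-1}B_1))=\cal N_{\rho|b_o^l|}(\pi_l(B_1))$. I would bound the last quantity from below by
\[\nu(B_1)\leq \cal N_{\rho'}(\pi_l(B_1))\cdot\max_a \nu(\pi_l^{-1}B_\R(a,\rho')),\qquad \rho'=\rho|b_o^l|,\]
controlling the right-hand factor by the projective non-concentration \eqref{equ:procon} (with the trivial fallback at scale $\delta$ when $\rho'<\delta$). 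The lower bound $\nu(B_1)\geq \mu(B_1)\nu(B)\gg\delta^{O(\epsilon)}$ comes from combining \eqref{equ:nuB} with \eqref{equ:mub0}, and the fact that $b_o\in E$ forces $|b_o^l|\in[\delta^{O(\epsilon)},\delta^{-\epsilon}]$; sorting out the exponents then yields $\nrho(\pi_l(b_o^{-1}B_1))\geq\delta^{O(\epsilon)}\rho^{-\kappa}$.

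Condition (ii) I would prove by contradiction. If $b_o^{-1}B_1$ were contained in the $\delta^{C_2\epsilon}$-neighbourhood of some maximal proper unitary subalgebra $W_{ij}=\{x:x^i=x^j\}$, then writing the distance as $|b^ib_o^j-b^jb_o^i|/(\sqrt 2\,|b_o^ib_o^j|)$ traps $B_1$ inside the slab $\{b:|\l\hat v,b\r|<\delta^{C_2\epsilon-O(\epsilon)}\}$ orthogonal to the unit vector $\hat v$ parallel to $b_o^je_i-b_o^ie_j$, where the $O(\epsilon)$ loss absorbs the renormalization of $v$ and the factor $|b_o^ib_o^j|\leq\delta^{-2\epsilon}$. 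Applying the projective non-concentration \eqref{equ:procon} in direction $\hat v$ then gives $\nu(B_1)\ll\delta^{-\epsilon+\kappa(C_2\epsilon-O(\epsilon))}$, which contradicts $\nu(B_1)\gg\delta^{O(\epsilon)}$ once $C_2$ is taken large enough in terms of $\kappa$ and the implicit constants. The main obstacle I anticipate is precisely this bookkeeping step: I must ensure that the normalization $\hat v=v/\|v\|$ and the division by $|b_o^ib_o^j|$ widen the slab only by a factor $\delta^{-O(\epsilon)}$, which rests on the fact that $b_o\in E$ keeps each coordinate $|b_o^j|$ above $\delta^{O(\epsilon)}$.
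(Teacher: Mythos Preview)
Your proposal is correct and follows essentially the same route as the paper. The paper organizes condition (i) by introducing the normalized measure $\nu_1=\nu|_{B_1}/\nu(B_1)$ and invoking Lemma~\ref{lem:klip} for the multiplication by $b_o^{-1}$, while you change scales directly; and for condition (ii) the paper gives the direct estimate $\nu_1\{x:f_{ij}(b_o^{-1}x)\in B_\R(0,\rho)\}\leq\delta^{-O(\epsilon)}\rho^\kappa$ using the vector $w=(b_o^i)^{-1}e_i-(b_o^j)^{-1}e_j$ (parallel to your $\hat v$), rather than framing it as a contradiction---but the computations are the same. One small omission: you should also record that $b_o^{-1}B_1\subset U$ (from $B_1\subset B(b_o,1/K^2)$) and $X_o\subset B_V(0,\delta^{-O(\epsilon)})$, as these containments are part of the hypotheses of Theorem~\ref{prop:sumpro}.
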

	\begin{proof}
		By Proposition \ref{prop:BSG}, the set $X_o$ satisfies $X_o\subset X_i\subset  \supp\nu^{(4\delta)}\subset B(0,\delta^{-O(\epsilon)})$, and $B_1$ satisfies $b_o^{-1}B_1\subset b_o^{-1}B(b_o,\delta^\epsilon r)\subset U$.
		
		Let 
		\begin{equation*}
		\nu_1=\frac{1}{\nu(B_1)}(\nu|_{B_1}).
		\end{equation*}
		By \eqref{equ:nuB} and \eqref{equ:mub0}
		\[\nu(B_1)=\nu(B)\mu(B_1)\gg \delta^{O(\epsilon)}. \] 
		Hence for any Borel measurable set $E$, we have 
		\begin{equation}\label{equ:nuone leq nu}
		\nu_1(E)\leq \delta^{-O(\epsilon)}\nu(E).
		\end{equation}
		
		Assumption (i) (non concentration): By \eqref{equ:nuone leq nu} and projective non concentration
		\begin{equation*}
		\forall \rho>\delta,\ \sup_{a\in\bb R}\nu_1(\pi_j^{-1} B_\RR(a,\rho) )\ll \delta^{-O(\epsilon)}\sup_{a\in\bb R}\nu(\pi_j^{-1} B_\RR(a,\rho) ) \leq \delta^{-O(\epsilon)}\rho^\kappa,
		\end{equation*}
		Therefore by $\|b_o^{-1}\|\leq \delta^{-O(\epsilon)}$ and Lemma \ref{lem:klip}, 
		\begin{equation}
		\nrho(\pi_j(b_o^{-1}B_1))\geq \delta^{O(\epsilon)}\nrho(\pi_j(B_1))\geq \frac{\nu_1(B_1)}{\sup_{a\in\bb R}\nu_1(\pi_j^{-1} B_\RR(a,\rho) )} \geq \delta^{O(\epsilon)}\rho^{-\kappa}. 
		\end{equation}
		
		Assumption (ii) (away from proper unitary subalgebras): All the maximal unitary subalgebras of $\RR^n$ have a form $\{x\in\RR^n| x^i=x^j \}$ with $i\neq j$. Let $f_{ij}(x)=x^i-x^j$ for $x\in\R^n$. By \eqref{equ:ygg} we know that $|(b_o)_i|,|(b_o)_j|\geq \delta^{O(\epsilon)}$. By \eqref{equ:nuone leq nu},
		$$\nu_1\{x| f_{ij}(b_o^{-1}x)\in B_\R(0,\rho) \}\leq \delta^{-O(\epsilon)}\nu\{x| f_{ij}(b_o^{-1}x)\in B_\R(0,\rho) \}.$$
		This is an estimate of being away from linear subspace. If we take the vector $w$ with its $i$-th, $j$-th coordinate equal to $(b_o)_i^{-1}$, $-(b_o)_j^{-1}$, and other coordinates equal to zero, and let $v=w/\|w\|$, then
		\[f_{ij}(b_o^{-1}x)=\l w,x\r. \]
		 Hence projective non concentration \eqref{equ:procon} for $v$  implies that
		$$ \nu\{x| f_{ij}(b_o^{-1}x)\in B_\R(0,\rho) \}\leq\nu(\pi_v^{-1} B_\R(0,\delta^{-O(\epsilon)}\rho))\leq  \delta^{-O(\epsilon)}\rho^\kappa.$$ 
		Hence $b_o^{-1}B_1$ is $\delta^{O(\epsilon)}$ away from proper subalgebra.
		
		Assumption (iii) (non concentration of $X_o$): By  \eqref{equ:nx1ai}  and \eqref{equ:aipij} we have for $\rho\geq\delta$ and $j=1,\cdots, n$,
		\[\nrho(\pi_j(X_o))\geq\frac{\ndelta(X_o)}{\sup_{a\in\bb R}\ndelta(X_o\cap\pi_j^{-1}B_\R(a,\rho))}\gg \delta^{O(\epsilon)}\frac{\ndelta(X_i)}{\sup_{a\in\bb R}\ndelta(X_i\cap\pi_j^{-1}B_\R(a,\rho))}\gg \delta^{O(\epsilon)}\rho^{-\kappa}. \]
		
		Assumption (iv): By \eqref{equ:aill}, 
		\[\ndelta(X_o)\ll\ndelta(X_i)\ll \delta^{-n+\sigma_1-O(\epsilon)}. \]
		When $\delta$ is small enough such that $\delta^\epsilon\leq 1/2$, the inequalities with Landau notation can be replaced by $\geq$ or $\leq$ with augmenting $O(\epsilon)$.
	\end{proof}
	The end of the proof of the $L^2$-flattening lemma:
	Let $C_1\epsilon$ and $C_2\epsilon$ be given in \eqref{equ:x1bx1} and Lemma \ref{lem:b1x1}, respectively. Suppose that $C_2\geq C_1$ (we can always augment $C_2$ in Lemma \ref{lem:b1x1}. The larger $C_2$ is, the easier the assumption is). Applying Theorem \ref{prop:sumpro} with $A=b_o^{-1}B_1\supset \{\id \}$ and $X=X_o$, when $\epsilon$ is sufficiently small, we have
	\[\sup_{b\in b_o^{-1}B_1}\ndelta(X_o+bX_o)\geq\delta^{-C_2\epsilon}\ndelta(X_o) .\]
	Due to $C_2\geq C_1$, we have $\delta^{-C_2\epsilon}\ndelta(X_o)\geq \delta^{-C_1\epsilon}\ndelta(X_o)$, which contradicts \eqref{equ:x1bx1}.
	The proof is complete.
\end{proof}
\begin{rem}\label{rem:weak}
	The only place where we need a stronger non concentration than non concentration on coordinate subspaces is in the proof of Lemma \ref{lem:b1x1}, when we check assumption (ii) of Theorem \ref{prop:sumpro}. In this step, we need a property of being away from a linear subspace.
\end{rem}
It remains to prove Proposition \ref{prop:BSG}. We first state the finite field version
\begin{prop}\cite[Thm.C]{bourgain_multilinear_2009} \cite[Prop. 4.1]{green_sum-product_2009}
	Let $K>1$. Let $A\subset\bb F_p$ and $B\subset\bb F_p^*$ be two sets. If for all $b$ in $B$, we have $\omega(+,A\times bA)\geq K^{-1}|A|^{3/2}|B|^{3/2}$. Then there exist $x$ in $B$ and $A'\subset A$, $B'\subset x^{-1}B$ with $|A'|\geq K^{-O(1)}|A|$ and $|B'|\geq K^{-O(1)}|B|$ such that for all $b'\in B'$,
	\[|A'+b'A'|\leq K^{O(1)}|A'|. \]
\end{prop}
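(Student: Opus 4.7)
The plan is to apply the classical (abelian-group) Balog--Szemer\'edi--Gowers theorem once for each fixed $b\in B$, producing $b$-dependent subsets of $A$ with small additive doubling, then to uniformize these sets across $b$ by a pigeonhole step, and finally to convert pairwise smallness of $|A'+bA'|$ into smallness of $|A'+b'A'|$ for $b'$ ranging in a dilate of a large sub-family of $B$ via the Ruzsa triangle inequality. Since $b\in\bb F_p^*$ implies $|bA|=|A|$, the energy hypothesis may be read, in the regime of interest, as $\omega(+,A\times bA)\geq K^{-O(1)}|A|^3$ for each $b\in B$, which is the standard input to BSG.

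\emph{Step 1 (per-$b$ BSG).} For each $b\in B$, the finite abelian Balog--Szemer\'edi--Gowers theorem applied to the pair $(A,bA)$ yields subsets $A_b^{(1)},A_b^{(2)}\subset A$ with $|A_b^{(i)}|\geq K^{-O(1)}|A|$ and $|A_b^{(1)}+bA_b^{(2)}|\leq K^{O(1)}|A|$. A standard Ruzsa--Pl\"unnecke symmetrization (covering one set by few translates of the difference set of the other, then passing to a common subset) produces a single set $A_b\subset A$ with $|A_b|\geq K^{-O(1)}|A|$ and $|A_b+bA_b|\leq K^{O(1)}|A_b|$.

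\emph{Step 2 (pigeonhole for a common subset).} The crux is to replace the family $\{A_b\}_{b\in B}$ by a single set $A'\subset A$ valid for a large sub-family $B^\star\subset B$. Dyadic pigeonhole lets us assume $|A_b|\sim M$ for $b$ in a subset $B_1\subset B$ of density $K^{-O(1)}$. Then double-counting the incidence $\{(a,b)\in A\times B_1:a\in A_b\}$ together with its second moment, and pigeonholing over the ``template'' $A_b\subset A$, yields a reference $b_0\in B_1$ for which $|A_b\cap A_{b_0}|\geq K^{-O(1)}M$ on a sub-family $B^\star\subset B_1$ with $|B^\star|\geq K^{-O(1)}|B|$. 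Setting $A':=A_{b_0}$ and applying Pl\"unnecke--Ruzsa on the large intersection $A_b\cap A'$ gives $|A'+bA'|\leq K^{O(1)}|A'|$ for every $b\in B^\star$.

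\emph{Step 3 (Ruzsa triangle across $b$).} For $b\in B^\star$, the Ruzsa triangle inequality applied to $b_0A'$, $A'$ and $bA'$ gives $|b_0A'-bA'|\cdot|A'|\leq|A'-b_0A'|\cdot|A'-bA'|\leq K^{O(1)}|A'|^2$, hence $|b_0A'-bA'|\leq K^{O(1)}|A'|$. Dividing by $b_0\in\bb F_p^*$ and converting the difference to a sum via Pl\"unnecke--Ruzsa yields $|A'+(b/b_0)A'|\leq K^{O(1)}|A'|$ for every $b\in B^\star$. Taking $x:=b_0\in B$ and $B':=b_0^{-1}B^\star\subset b_0^{-1}B$ completes the proof. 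The main obstacle is Step 2: a naive double-count yields only a popular \emph{element} common to many $A_b$, whereas we need a popular \emph{subset} $A'$ of density $K^{-O(1)}$ in $A$ shared by a positive-density sub-family of $B$. Producing this uniformity --- typically via dependent random choice or an iterated Ruzsa-covering argument --- while preserving the density of $B^\star$ in $B$ is the genuinely non-trivial point.
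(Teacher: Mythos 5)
Your overall architecture (per-$b$ Balog--Szemer\'edi--Gowers, a pigeonhole to find a popular reference $b_0$, then Ruzsa calculus to pass to ratios $b/b_0$) is the same as the paper's, but Step 1 contains a genuine error that the rest of the argument inherits. The additive BSG theorem applied to $(A,bA)$ gives two \emph{different} subsets $A_b^{(1)},A_b^{(2)}\subset A$ with $|A_b^{(1)}+bA_b^{(2)}|\leq K^{O(1)}|A|$, and there is no ``standard Ruzsa--Pl\"unnecke symmetrization'' upgrading this to a single set $A_b$ with $|A_b+bA_b|\leq K^{O(1)}|A_b|$. Pl\"unnecke--Ruzsa does give small \emph{untwisted} doubling of $A_b^{(1)}$ and of $A_b^{(2)}$, but every attempt to bound the twisted sum of one set with itself via the Ruzsa triangle inequality runs through a quantity of exactly the same twisted type, and the statement is in fact false: take $P=\{1,\dots,N\}\subset\bb F_p$ with $p\gg N^2$, $b$ generic, and $A=P\cup b^{-1}P$. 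The pair $(A_b^{(1)},A_b^{(2)})=(P,b^{-1}P)$ satisfies $|A_b^{(1)}+bA_b^{(2)}|=|P+P|\ll|A|$, yet any $A_0\subset A$ with $|A_0|\geq c|A|$ contains a set $Q$ of size $\gg N$ inside $P$ or inside $b^{-1}P$, and then $|A_0+bA_0|\geq|Q+bQ|\gg|Q|^2\gg N\,|A_0|$ for generic $b$. Note also that if your Steps 1--2 were correct they would yield $|A'+bA'|\leq K^{O(1)}|A'|$ for the \emph{original} elements $b$ of a positive-proportion subfamily of $B$, which is strictly stronger than the proposition and false (same example with $B=\{b\}$, where the energy hypothesis holds with $K=O(1)$); this is precisely why the conclusion is phrased for $B'\subset x^{-1}B$, i.e.\ for ratios.

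The correct route, which is what the paper does in its discretized analogue (Proposition \ref{prop:BSG}), is to keep the asymmetric pair all the way through: apply Proposition \ref{prop:bsg} to get $(A_b^{(1)},A_b^{(2)})$ for each $b$, then pigeonhole over $b$ so that \emph{both} intersections $A_b^{(1)}\cap A_{b_0}^{(1)}$ and $A_b^{(2)}\cap A_{b_0}^{(2)}$ are simultaneously large --- in the paper this is arranged by running the popularity argument (Lemma \ref{lem:bb'}) on the product sets $S_b=Y_b\times Y_b'$, a point your single-family pigeonhole in Step 2 does not supply --- and only then chain with Ruzsa calculus: $A_{b_0}^{(1)}\approx A_b^{(1)}\approx bA_b^{(2)}\approx bA_{b_0}^{(2)}=(b/b_0)\,b_0A_{b_0}^{(2)}\approx (b/b_0)A_{b_0}^{(1)}$. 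The ratio $b/b_0$ enters exactly at the last link, and one takes $A'=A_{b_0}^{(1)}$, $x=b_0$. Your Step 3 is essentially this final chain, but it cannot be reached from your Steps 1--2 as written.
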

The main point is to find $A'$ which is uniform for $b$. This is accomplished by using the pigeon-hole principle. For more details, please see \cite[Prop. 4.1]{green_sum-product_2009} or the following proof.
\begin{proof}[Proof of Proposition \ref{prop:BSG}]
	We follow closely the proof of \cite[Proposition 4.1]{green_sum-product_2009}. Since $B$ and $B^{-1}$ are bounded by $K$, if we multiply a set by an element in $B$, then Lemma \ref{lem:klip} implies that we only lose some power on $K$, which does not change the result. That means for $b$ in $B$ and a subset $X$ of $\R^n$, we have
	\[K^{-O(1)}\ndelta(bX)\leq \ndelta(X)\leq K^{O(1)}\ndelta(bX).\]
	Hence, we will not write the comparison of $\ndelta(A)$ with $\ndelta(bA)$ for bounded set $A$. They have the same size.
	
	For every $b\in B$, using additive Balog-Szemerédi-Gowers theorem (Proposition \ref{prop:bsg}), we have $X_b\times X_b'\subset X\times X'$ such that 
	\begin{equation}\label{equ:xbbyb}
	\ndelta(X_b+bX_b')\leq K^{O(1)}\ndelta(X)^{1/2}\ndelta(X')^{1/2}
	\end{equation}
	 and 
	 \begin{equation}\label{equ:xbx}
	 \ndelta(X_b)\geq K^{-O(1)}\ndelta(X),\ \ndelta(X_b')\geq K^{-O(1)}\ndelta(X').
	 \end{equation}
	  The result we need is a uniform version, independent of $b$. For this purpose, 
	we want to find an element $b_o$ in $B$ and a portion of $B$ such that the intersection of two sets $X_{b_o}$ and $X_b$ is large for $b$ in this portion. 
	\begin{lem}\label{lem:bb'}
		Let $\mu$ be a probability measure on a set $B\subset B_{\R^n}(K)$. Let $S$ be a compact set of $\R^n$. Assume that for every $b$ in $B$, there exists $S_b\subset S$ such that
		\[ |S_b|\geq K^{-1}|S|. \]
		Then there exist
			$b_o$ in $B$ and $B_1\subset B\cap B(b_o,1/K^2)$ containing $b_o$ such that $\mu(B_1)\geq K^{-O(1)}$, and for every $b$ in $B_1$
			\begin{equation}\label{equ:bb'}
					|S_b\cap S_{b_o}|\geq K^{-O(1)}|S|. 
			\end{equation}
	\end{lem}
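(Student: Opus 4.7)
The plan is a Fubini--Cauchy--Schwarz pigeonhole after a preliminary localization step. The subtle point I expect to be the main obstacle is the ordering: one cannot first pick a good $b_o$ (maximizing some total intersection mass over all of $B$) and then localize around it, because the ball $B(b_o,1/K^2)$ need not carry a positive fraction of $\mu$. So one must first restrict to a small ball and only then carry out the pigeonhole inside it, which forces both the ``center'' $b_o$ and the ``good'' set $B_1$ to live in the same small ball.

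First I would cover $B\subset B_{\R^n}(K)$ by $O(K^{3n})$ closed balls of radius $1/(2K^2)$; by pigeonhole, some such ball $B(c,1/(2K^2))$ satisfies $\mu(\tilde B)\geq K^{-O(1)}$, where $\tilde B:=B\cap B(c,1/(2K^2))$. Next, by Fubini and the Cauchy--Schwarz inequality on $S$,
\[\int_{\tilde B\times\tilde B}|S_b\cap S_{b'}|\,d\mu(b)\,d\mu(b')=\int_S\Bigl(\int_{\tilde B}\BB_{S_b}(x)\,d\mu(b)\Bigr)^2 dx\geq\frac{1}{|S|}\Bigl(\int_{\tilde B}|S_b|\,d\mu(b)\Bigr)^2\geq\frac{\mu(\tilde B)^2|S|}{K^2},\]
using the hypothesis $|S_b|\geq K^{-1}|S|$. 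Another application of pigeonhole yields a $b_o\in\tilde B$ with $\int_{\tilde B}|S_{b_o}\cap S_b|\,d\mu(b)\geq\mu(\tilde B)K^{-2}|S|$.

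Finally, to extract $B_1$ I would apply the inverse Chebyshev idea (in the spirit of Lemma \ref{lem: inverse cheb}): set $B_1:=\{b\in\tilde B:|S_{b_o}\cap S_b|\geq\tfrac{1}{2}\mu(\tilde B)K^{-2}|S|\}$. Using the trivial bound $|S_{b_o}\cap S_b|\leq|S|$ on $B_1$ and the defining inequality on $\tilde B\setminus B_1$ gives $\mu(\tilde B)K^{-2}|S|\leq\mu(B_1)|S|+\tfrac{1}{2}\mu(\tilde B)K^{-2}|S|$, hence $\mu(B_1)\geq\tfrac{1}{2}\mu(\tilde B)K^{-2}\geq K^{-O(1)}$. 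Since $|S_{b_o}|\geq K^{-1}|S|\geq\tfrac{1}{2}\mu(\tilde B)K^{-2}|S|$ for $K$ large, $b_o$ itself lies in $B_1$; and since $\tilde B\subset B(c,1/(2K^2))$ and $b_o\in\tilde B$, the triangle inequality gives $B_1\subset B(b_o,1/K^2)$, as required.
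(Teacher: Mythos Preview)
Your proof is correct and follows essentially the same strategy as the paper's: cover $B$ by $O(K^{3n})$ small balls, combine Fubini/Cauchy--Schwarz with pigeonholing, then extract $B_1$ via an inverse Chebyshev argument. The only cosmetic difference is the order of operations---you first pigeonhole on $\mu$-mass to get $\tilde B$ and then apply Cauchy--Schwarz inside it, whereas the paper applies Cauchy--Schwarz globally over $B$ and then pigeonholes to a ball $C_i$ maximizing the double integral $\int_{C_i^2}|S_b\cap S_{b'}|$; your ordering is if anything slightly cleaner (one Cauchy--Schwarz instead of two), and your choice of radius $1/(2K^2)$ to guarantee $B_1\subset B(b_o,1/K^2)$ via the triangle inequality is a nice touch.
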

		\begin{proof}

		We cover $B$ with $O(K^{3n})$ balls of radius $1/K^2$, written as $C_1,\dots, C_j$. We claim that:
		There exists $i$ such that
		\begin{equation}\label{equ:ci2}
		\int_{C_i^2}|S_b\cap S_{b'}|\dd\mu(b)\dd\mu(b')\gg K^{-O(1)}|S|.
		\end{equation}
		By hypothesis, we have
		\begin{equation}\label{equ:BS}	
			\int_{B}\int_{S}\BB_{S_b}(x)\dd x\dd\mu(b)=\int_B|S_b| \dd\mu(b)\geq K^{-1}|S|.
		\end{equation}
			
		By Cauchy-Schwarz's inequality
		\begin{align}\label{equ:orici}
		K^{3n}\sum_{i}\left(\int_{C_i} \BB_{S_b}(x)\dd\mu(b)\right)^2\gg  \left(\int_B \BB_{S_b}(x)\dd\mu(b)\right)^2.
		\end{align}
		By Cauchy-Schwarz's inequality and \eqref{equ:BS}
		\begin{equation}\label{equ:sbsb}
		\int_S\left(\int_B \BB_{S_b}(x)\dd\mu(b)\right)^2\dd x\geq \left(\int_S\int_B\BB_{S_b}(x)\dd\mu(b)\dd x\right)^2/|S|\geq K^{-O(1)}|S|.
		\end{equation}
		Rewrite the left hand side of \eqref{equ:orici} and integrate it with respect to the Lebesgue measure on $S$.
		Combined with \eqref{equ:sbsb} we have
		\[\sum _i\int_{C_i^2}|S_b\cap S_{b'}|\dd\mu(b)\dd\mu(b')\gg K^{-O(1)}|S|.  \]
		The claim \eqref{equ:ci2} follows.

		 By Lemma \ref{lem: inverse cheb}, we can find $C'$, a subset of $C_i^2$ and containing $\{(b,b)| b\in C_i \}$., such that $\mu\otimes\mu(C')\gg K^{-O(1)}$ and for all $(b,b')\in C'$
		\begin{equation}\label{equ:Yb cap Yb}
		|S_b\cap S_{b'}|\gg K^{-O(1)}|S|. 
		\end{equation}
		By Fubini's theorem, we can find a $b_o$ such that $\mu\{b\in C_i|(b_o,b)\in C' \}\gg K^{-O(1)}$. We let $B_1=\{b\in C_i|(b_o,b)\in C' \}$, then this set satisfies the measure assumption.
	\end{proof}
		The $\delta$ neighbourhood of a set behaves well under intersection. In order to simplify the notation, abbreviate $X^{(\delta)}, X'^{(\delta)}, X_b^{(\delta)},X_b'^{(\delta)}$ to $Y,Y',Y_b,Y_b'$. By \eqref{equ:neiequ} we have
		\begin{equation}\label{equ:delta neighborhood}
		\ndelta(X)\sim |Y|\delta^{-n}.
		\end{equation}
		Due to \eqref{equ:xbbyb} and \eqref{equ:xbx}, we have $\ndelta(X)^{1/2}\ndelta(X')^{1/2}\geq K^{-O(1)}\ndelta(X_b+bX_b')\geq K^{-O(1)}\ndelta(X_b)\geq K^{-O(1)}\ndelta(X)$, which implies
		\begin{equation*}
		\ndelta(X)\sim_{K^{O(1)}}\ndelta(X').
		\end{equation*}
		Hence
		\begin{equation}\label{equ:Y K Y'}
		|Y|\sim \delta^n\ndelta(X)\sim_{K^{O(1)}}\delta^n\ndelta(X') \sim |Y'|.
		\end{equation}
		Let
			\begin{equation}
			S=Y\times Y' \text{ and }S_b=Y_b\times Y_b' \text{ for }b\in B.
			\end{equation}
		By \eqref{equ:xbx}, we have $|Y_b|\geq \delta^{O(\epsilon)}|Y|$ and $|Y_b'|\geq\delta^{O(\epsilon)}|Y'|$. Hence, we can use Lemma \ref{lem:bb'} with $K=\delta^{-O(\epsilon)}$ to obtain an element $b_o$ and a set $B_1$ with desired property.  Next, we want to find $X_o$. Due to 
		\[\delta^{O(\epsilon)}|Y||Y'|=\delta^{O(\epsilon)}|S|\leq |S_b\cap S_{b_o}|=|Y_b\cap Y_{b_o}||Y_b'\cap Y_{b_o}'|, \]
		together with \eqref{equ:Y K Y'}, we obtain
		\begin{equation}\label{equ:bb0}
			|Y_b\cap Y_{b_o}|, |Y_b'\cap Y_{b_o}'|\geq \delta^{O(\epsilon)}|Y|.
		\end{equation}
	
	The proof concludes by Ruzsa calculus. 
	  By Lemma \ref{lem:ruzsa}(1) and \eqref{equ:xbbyb}, we have $$\sigma_\delta[X_{b_o}],\sigma_\delta[X_b],\sigma_\delta[X_{b_o}'],\sigma_\delta[X_b']\leq K^{O(1)}.$$ 
	  By \eqref{equ:bb0} and \eqref{equ:delta neighborhood}, we have $$|X_{b_o}^{(\delta)}\cap X_{b}^{(\delta)}|,|X_{b_o}'^{(\delta)}\cap X_{b}'^{(\delta)}|\geq K^{-O(1)}|X^{(\delta)}|\geq K^{-O(1)}\delta^n\ndelta(X).$$
	By Lemma \ref{lem:ruzsa}(3), we have $X_{b_o}\approx_{K^{O(1)}} X_b$ and $X_{b_o}'\approx_{K^{O(1)}} X_b'$, the latter implies $bX_{b_o}'\approx_{K^{O(1)}} bX_b'$. Therefore by Lemma \ref{lem:ruzsa}(2)
	\[X_{b_o}\approx_{K^{O(1)}} X_b\approx_{K^{O(1)}} bX_b'\approx_{K^{O(1)}} bX_{b_o}'=\frac{b}{b_o}b_oX_{b_o}'\approx_{K^{O(1)}}\frac{b}{b_o}X_{b_o}. \]
	We get $X_{b_o}\approx_{K^{O(1)}} \frac{b}{b_o}X_{b_o}$. Let $X_o=X_{b_o}\subset X$. The proof is complete.
\end{proof}

\subsection{Proof of the Fourier decay of multiplicative convolutions}
\label{sec:foudec}
 Using $L^2$-flattening (Lemma \ref{lem:l2fla}), we give a proof of Theorem \ref{thm:sumfourier}. The strategy is to apply $L^2$-flattening to 
 \[\nu=\frac{1}{2}\left(\murr{2k}+\murr{k}\right). \]
 We need a lemma which explains the connection between $\|\nude\|_2$ and the Fourier transform of $\nu$.
 	\begin{lem}\label{lem:nudel}
 		Let $\delta>0$, $C>8$ and let $\delta_1=2\delta/C$. Let $\nu$ be a Borel probability measure on $\RR^n$ with  support in $B(0,C)$. We have
 		\begin{equation}\label{equ:nudel}
 		\|\nu_{\delta_1}\|_2^2\sim_C \int_{B(0,2\delta^{-1})}|\hat{\nu}(\xi)|^2\dd\xi,
 		\end{equation}
 		\begin{equation}\label{equ:nudelcon}
 		\int \|\nu_{\delta_1}*(m_y)_*\nu_{\delta_1}\|_2^2\dd\nu(y)\gg\int_{B(0,2\delta^{-1})}\int|\hat{\nu}(\xi)|^2|\hat{\nu}(y\xi)|^2\dd\nu(y)\dd\xi.
 		\end{equation}
 	\end{lem}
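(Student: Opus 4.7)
The proof is a direct Plancherel computation. Since $\nu_{\delta_1}=\nu*P_{\delta_1}$, Plancherel gives $\|\nu_{\delta_1}\|_2^2=\int|\hat\nu(\xi)|^2|\hat P_{\delta_1}(\xi)|^2\,d\xi$, where $\hat P_{\delta_1}(\xi)=\hat P_1(\delta_1\xi)$ is a rescaled radial Bessel-type function with $\hat P_1(0)=1$ and first zero at some dimensional constant $\eta_0(n)\ge 1/2$. With $\delta_1=2\delta/C$ and $C>8$, for $\xi\in B(0,2\delta^{-1})$ we have $|\delta_1\xi|\le 4/C<1/2$, placing $\hat P_{\delta_1}(\xi)$ safely inside its central peak. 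Consequently $|\hat P_{\delta_1}(\xi)|^2\in[c_C,1]$ on $B(0,2\delta^{-1})$ for a constant $c_C>0$ depending only on $C$ and $n$.

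The lower bound in \eqref{equ:nudel} is then immediate by restricting the Plancherel integral to $B(0,2\delta^{-1})$. For the upper bound, I would use Parseval to rewrite $\|\nu_{\delta_1}\|_2^2=\int R\,d\mu$ with $R:=P_{\delta_1}*P_{\delta_1}$ (nonnegative, supported in $B(0,2\delta_1)$, peak $R(0)\sim_C\delta^{-n}$) and $\mu:=\nu*\nu^-$, and introduce a Fej\'er kernel $\chi:=\BB_{B(0,1/\delta)}*\BB_{B(0,1/\delta)}$ in the Fourier variable. This $\chi$ is nonnegative, supported in $B(0,2\delta^{-1})$, bounded above by $|B(0,1/\delta)|\BB_{B(0,2\delta^{-1})}\sim\delta^{-n}\BB_{B(0,2\delta^{-1})}$, with nonnegative inverse Fourier transform $\hat\chi=|\widehat{\BB_{B(0,1/\delta)}}|^2\gg\delta^{-2n}$ on a neighbourhood of the origin containing $\supp R$ (secured by $C>8$). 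Hence $R\ll_C\delta^n\hat\chi$ pointwise on $\supp R$, yielding
\[
\|\nu_{\delta_1}\|_2^2=\int R\,d\mu\ll_C\delta^n\int\hat\chi\,d\mu=\delta^n\int\chi|\hat\nu|^2\,d\xi\ll_C\int_{B(0,2\delta^{-1})}|\hat\nu|^2\,d\xi.
\]

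For \eqref{equ:nudelcon}, since coordinate multiplication by $y$ is self-adjoint under $\langle\xi,x\rangle$, the measure $(m_y)_*\nu_{\delta_1}$ has Fourier transform $\hat\nu(y\xi)\hat P_{\delta_1}(y\xi)$, and Plancherel gives
\[
\|\nu_{\delta_1}*(m_y)_*\nu_{\delta_1}\|_2^2=\int|\hat\nu(\xi)|^2|\hat\nu(y\xi)|^2|\hat P_{\delta_1}(\xi)|^2|\hat P_{\delta_1}(y\xi)|^2\,d\xi.
\]
Restricting the $\xi$-integration to $B(0,2\delta^{-1})$ controls the first $|\hat P_{\delta_1}|^2$ factor from below by $c_C$. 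For the second factor, $y\in\supp\nu\subset B(0,C)$ yields $|\delta_1 y\xi|\le 4$, confining $y\xi$ to a bounded Fourier region on which $|\hat P_{\delta_1}|^2$ admits a positive lower bound away from the zeros of $\hat P_1$; integration over $\nu(y)$ then gives the desired inequality.

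The main obstacle is the upper bound direction of \eqref{equ:nudel}: the Fej\'er construction must carefully match the support of $\chi$ (forced inside $B(0,2\delta^{-1})$) against the region where $\hat\chi$ dominates $R$, and the threshold $C>8$ is precisely what secures this matching. The second statement \eqref{equ:nudelcon} is then essentially the same Plancherel bookkeeping.
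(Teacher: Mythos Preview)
Your treatment of \eqref{equ:nudel} is correct and is essentially the paper's argument: the lower bound is Plancherel restricted to $B(0,2\delta^{-1})$, and the upper bound is a Fej\'er-kernel comparison. The paper writes the Fourier-side mollifier as $P_{1/\delta_2}*P_{1/\delta_2}$ with $\delta_2=2\delta$ and passes back to physical space, whereas you first pass to the autocorrelation $R=P_{\delta_1}*P_{\delta_1}$ and the measure $\nu*\nu^-$ and then introduce $\chi=\BB_{B(0,1/\delta)}*\BB_{B(0,1/\delta)}$; these are two orderings of the same identity.

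There is, however, a genuine gap in your argument for \eqref{equ:nudelcon}, and the paper's own proof glosses over the same point. After restricting to $\xi\in B(0,2\delta^{-1})$ you correctly obtain $\|\delta_1\xi\|\le 4/C<1/2$, which controls the first factor $|\hat P_{\delta_1}(\xi)|^2$ from below. But for the second factor the best bound is $\|\delta_1 y\xi\|\le 4$, and the ball $B(0,4)$ contains zeros of $\hat P_1$ (already in dimension one the first zero is at $1/2$). Saying that $|\hat P_1|^2$ ``admits a positive lower bound away from the zeros'' does not give a uniform lower bound on $B(0,4)$, and integrating in $y$ against an arbitrary probability measure $\nu$ cannot repair this: nothing prevents $\nu$ from concentrating near the set $\{y:\hat P_1(\delta_1 y\xi)=0\}$ for some fixed $\xi$. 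The paper asserts $\|\delta_1 y\xi\|\le 1$ and then appeals to positivity of $\hat P_1$ on $B(0,1/8)$, which is the same mismatch of constants.

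A clean fix is to tighten the relation between $\delta_1$ and $\delta$: with $\delta_1\le \delta/(16C)$ one gets $\|\delta_1 y\xi\|\le 1/8$ for $y\in\supp\nu$ and $\xi\in B(0,2\delta^{-1})$, so that both $\hat P_1$ factors are bounded below by an absolute constant and your Plancherel argument goes through verbatim. This change does not affect \eqref{equ:nudel} (your Fej\'er comparison only improves, since $\supp R\subset B(0,2\delta_1)$ shrinks), and since the lemma is only ever applied with implicit constants depending on $C$, it is harmless for the rest of the paper.
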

 	The proof of Lemma \ref{lem:nudel} will be given at the end of this section.

  Recall that $\mu_k$ is the $k$-times multiplicative convolution of $\mu$. We have
  \[\left|\int \exp(2i\pi\l \xi, x_1\cdots x_k\r)\dd\mu(x_1)\cdots \dd\mu(x_k)\right|=|\hat\mu_k(\xi)|. \]
  For $k,r\in\bb N$, let $\sigma_{k,r}$ be the real number defined by
  \begin{equation}
  \sigma_{k,r}=\frac{\log\int_{\xi\in B(0,2\delta^{-1})}|\hat{\mu}_k(\xi)|^{2r}\dd\xi}{|\log \delta|}\sim\frac{\log\|(\mu_{k,r})_\delta\|_2^2}{|\log \delta|},
  \end{equation}
  where $\mu_{k,r}=(\mu_k*\mu_k^{-})^{*r}$. 
  
  The remainder of the proof is to control $\sigma_{k,r}$, divided into two steps. We first prove that if $\sigma_{k,r}$ is not sufficiently small, then $L^2$-flattening (Lemma \ref{lem:l2fla}) reduces the value of $\sigma_{k,r}$. When $\sigma_{k,r}$ is sufficiently small, the H\"older regularity of $\mu$ enables us to finish the proof.  This can be understood that if a measure $\mu$ satisfies non concentration assumption, then after sufficient multiplicative and additive convolutions, the sum-product phenomenon implies that $\mu_{k,r}$ is much more smooth.
  
\begin{proof}[Proof of Theorem \ref{thm:sumfourier}]
	Let
	\begin{equation}\label{equ:epskap}
		\kappa_1=\kappa_0/4, \ \epsilon=\min\{ \epsilon(\kappa_1/2,\kappa_0),\kappa_0\}/2,
	\end{equation}
	where $\epsilon(\kappa_1/2,\kappa_0)$ is given in $L^2$-flattening (Lemma \ref{lem:l2fla}).
	
	\textbf{Reducing the value}:
	We have a consequence of $L^2$-flattening (Lemma \ref{lem:l2fla}), whose proof will be given later.
	\begin{lem}\label{lem2kk}
		Under the assumption of Theorem \ref{thm:sumfourier}, if $\sigma_{k,r}\geq\kappa_1$, then for $\delta$ small enough depending on $k,r$, we have
		\[\sigma_{2k,r'}\leq \sigma_{k,r}-\epsilon, \]
		where $r'=8r^2+4r$.
	\end{lem}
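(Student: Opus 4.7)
The strategy, as indicated just before the statement, is to apply Lemma \ref{lem:l2fla} ($L^2$-flattening) to the symmetric probability measure
\[\nu = \tfrac{1}{2}(\murr{k}+\murr{2k}) = \tfrac{1}{2}(\mu_{k,r}+\mu_{2k,r}),\]
and then to translate the resulting inequality into a bound on $\int_{B(0,2\delta^{-1})}|\hat\mu_{2k}(\xi)|^{4r'}\,d\xi$ via Lemma \ref{lem:nudel}. First I verify the hypotheses of Lemma \ref{lem:l2fla} for $\nu$. The support of $\nu$ lies in $[-2r,2r]^n\subset[-\delta^{-\epsilon},\delta^{-\epsilon}]^n$ for $\delta$ small enough depending on $r$. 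Projective non-concentration of $\nu$ with exponent $\kappa_0$ (up to a loss in the constant $\epsilon$) propagates from the projective non-concentration of $\mu$: since $\mu$-samples lie in $[1/2,1]^n$, coordinate multiplication only rescales any fixed direction by a bounded factor, and additive convolution preserves projective non-concentration. Since $\hat\mu_{k,r}=|\hat\mu_k|^{2r}\ge 0$ and similarly for $\hat\mu_{2k,r}$, we have $\hat\nu\ge \tfrac{1}{2}\hat\mu_{k,r}\ge 0$, hence $\|\nu_\delta\|_2^2\gg \|(\mu_{k,r})_\delta\|_2^2\gg \delta^{-\sigma_{k,r}}\ge \delta^{-\kappa_1}$ by the assumption $\sigma_{k,r}\ge \kappa_1$. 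Thus Lemma \ref{lem:l2fla} applies with $(\sigma_1,\kappa)=(\kappa_1/2,\kappa_0)$, matching the $\epsilon$ fixed in \eqref{equ:epskap}.

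Applying Lemma \ref{lem:l2fla} and translating by Lemma \ref{lem:nudel} yields
\[\int_{B(0,2\delta^{-1})}\int|\hat\nu(\xi)|^2|\hat\nu(y\xi)|^2\,d\nu(y)\,d\xi\;\ll\;\delta^{\epsilon}\|\nu_\delta\|_2^2\;\ll\;\delta^{\epsilon-\sigma_{k,r}},\]
where the final upper bound also uses $\|(\mu_{2k,r})_\delta\|_2^2\ll_k\|(\mu_{k,r})_\delta\|_2^2$, a consequence of the identity $\hat\mu_{2k}(\xi)=\int\hat\mu_k(y\xi)\,d\mu_k(y)$, Jensen's inequality, and a change of variables $\eta=y\xi$ with $|\det y|^{-1}\le 2^{nk}$. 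Combining the pointwise lower bound $\hat\nu(\xi)\ge \tfrac{1}{2}|\hat\mu_{2k}(\xi)|^{2r}$ at both $\xi$ and $y\xi$ with the restriction $\nu\ge\tfrac{1}{2}\mu_{k,r}$ in the outer integration, I extract the core estimate
\[\int_{B(0,2\delta^{-1})}\int|\hat\mu_{2k}(\xi)|^{4r}|\hat\mu_{2k}(y\xi)|^{4r}\,d\mu_{k,r}(y)\,d\xi\;\ll\;\delta^{\epsilon-\sigma_{k,r}}.\]

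It remains to upgrade the exponent on $|\hat\mu_{2k}|$ from $4r$ to $4r'$, so that the resulting $\int|\hat\mu_{2k}|^{4r'}\,d\xi=\|(\mu_{2k,r'})_\delta\|_2^2\cdot\delta^{o(1)}$ is controlled by $\delta^{\epsilon-\sigma_{k,r}}$, giving the desired $\sigma_{2k,r'}\le \sigma_{k,r}-\epsilon$. The engine is the Cauchy--Schwarz bound $|\hat\mu_{2k}(\xi)|^2\le\int|\hat\mu_k(y\xi)|^2\,d\mu_k(y)$, which after a $2r$-th power and Jensen becomes $|\hat\mu_{2k}(\xi)|^{4r}\le\int|\hat\mu_k(y\xi)|^{4r}\,d\mu_k(y)$. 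Iterating this bound $2r+1$ times against the $2r$-fold additive convolution structure of $\mu_{k,r}=(\mu_k*\mu_k^-)^{(r)}$, and combining with the core estimate by a H\"older-type argument, should produce the desired bound with $r'=4r(2r+1)=8r^2+4r$; the quadratic dependence of $r'$ on $r$ is precisely forced by the interplay between the $4r$ multiplicative factors produced by $L^2$-flattening and the $2r+1$ additive iterations needed to convert back from an integration against $\mu_{k,r}$ to a pure power of $\hat\mu_{2k}$. This final combinatorial step, which balances multiplicative and additive integrations, is the main technical obstacle of the proof.
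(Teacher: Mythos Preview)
Your setup is correct and matches the paper: the measure $\nu$, the verification of projective non-concentration, the lower bound $\|\nu_{\delta_1}\|_2^2\gg\delta^{-\kappa_1}$, the application of $L^2$-flattening, and the translation via Lemma~\ref{lem:nudel} are all essentially what the paper does (Lemma~\ref{lem:nucon} and Lemma~\ref{lem:bonmu}).

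The genuine gap is your final paragraph. You explicitly flag it as ``the main technical obstacle'' and describe only a speculative scheme (``iterating this bound $2r+1$ times\ldots\ should produce''). That scheme is not how the argument closes, and your ``core estimate'' is set up suboptimally for the actual endgame: you lower-bound \emph{both} factors $|\hat\nu(\xi)|^2$ and $|\hat\nu(y\xi)|^2$ by $\tfrac14|\hat\mu_{2k}|^{4r}$, whereas the paper uses the $\mu_k$-part of $\nu$ for the $y\xi$ factor. The reason is that the closing step is not an iteration but a single duality identity. Since $\langle x\xi,z\rangle=\langle z\xi,x\rangle$ in the ring $\mathbb R^n$ and $\widehat{\mu_{k,r}}=|\hat\mu_k|^{2r}$, one has
\[
\int |\hat\mu_k(x\xi)|^{2r}\,d\mu_k(x)\;=\;\int \widehat{\mu_{k,r}}(x\xi)\,d\mu_k(x)\;=\;\int \hat\mu_k(y\xi)\,d\mu_{k,r}(y).
\]
Combining this with $|\hat\mu_{2k}(\xi)|^{2r}\le\int|\hat\mu_k(x\xi)|^{2r}\,d\mu_k(x)$ and a second application of H\"older gives directly
\[
|\hat\mu_{2k}(\xi)|^{8r^2}\;\le\;\int |\hat\mu_k(y\xi)|^{4r}\,d\mu_{k,r}(y),
\]
so that multiplying by $|\hat\mu_{2k}(\xi)|^{4r}$ and integrating yields
\[
\int_{B(0,2\delta^{-1})}|\hat\mu_{2k}(\xi)|^{8r^2+4r}\,d\xi\;\le\;\int_{B(0,2\delta^{-1})}|\hat\mu_{2k}(\xi)|^{4r}\!\int|\hat\mu_k(y\xi)|^{4r}\,d\mu_{k,r}(y)\,d\xi\;\ll\;\int_{B}\!\int|\hat\nu(\xi)|^2|\hat\nu(y\xi)|^2\,d\nu(y)\,d\xi.
\]
This is the content of the paper's Lemma~\ref{lem:bonmu} (equation~\eqref{equ:bonmunu}), and it explains cleanly why $r'=4r(2r+1)=8r^2+4r$: one H\"older step gives $2r$, the Plancherel swap contributes nothing, and the second H\"older step gives another $4r$ on top of the remaining $|\hat\mu_{2k}|^{4r}$. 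No iterated bootstrapping is needed; the single swap from integration against $\mu_k$ to integration against $\mu_{k,r}$ is the missing idea.
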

	
	\textbf{Sufficient regularity}: We have a higher dimensional version of \cite[Theorem 7]{bourgain2010discretized}, which says that if two measures have sufficient H\"older regularity, then the multiplicative convolution of these two measures has power decay in its Fourier transform.
	\begin{lem}\label{lem:alpbet} Let $\alpha>\beta>0$ and $\delta>0$. Let $\mu$ be a measure on $B(0,1)$ such that for $j=1,\dots,n$
		\begin{equation}\label{equ:supmua}
		\sup_{a}(\pi_j)_*\mu(B_\R(a,\delta))\leq \delta^{\alpha}.
		\end{equation}
		Let $K>2$ be a parameter. Let $\nu$ be a compactly supported measure on $B(0,K)$ such that
		\begin{equation}\label{equ:nvfoudec}
		\int_{B(0,2\delta^{-1})} |\hat{\nu}(\xi)|\dd\xi\leq \delta^{-\beta}.
		\end{equation}
		Then for $\|\xi\|\in [\delta^{-1}/2,\delta^{-1}]$
		\[\int |\hat{\nu}(x\xi)|\dd\mu(x)\ll_{K,n} \delta^{\frac{\alpha-\beta}{n+2}}. \] 
	\end{lem}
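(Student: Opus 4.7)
The plan is to follow the Fourier-analytic smoothing scheme behind Bourgain's one-dimensional Theorem 7 in \cite{bourgain2010discretized}. The key point is that even though the hypothesis on $\mu$ only controls coordinate projections, the constraint $\|\xi\|\in[\delta^{-1}/2,\delta^{-1}]$ forces some coordinate $|\xi^{j^*}|\gtrsim\delta^{-1}$, and this alone is enough to bound the density of the push-forward $\xi_{*}\mu$ on thin slabs.

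First I would perform a preliminary smoothing of $\nu$: by Fourier inversion the $L^{1}$ hypothesis $\int_{B(0,2\delta^{-1})}|\hat\nu|\le\delta^{-\beta}$ lets one replace $\nu$ by $\tilde\nu:=\nu*\phi_{\delta}$, where $\phi$ is a fixed nonnegative bump whose Fourier transform is bounded below on $B(0,1)$. Since $|\hat\nu|$ and $|\hat{\tilde\nu}|$ are comparable on $B(0,2\delta^{-1})$, it suffices to bound $I:=\int|\hat{\tilde\nu}(x\xi)|\,d\mu(x)$. The advantage is that $\tilde\nu$ now has density $\|\tilde\nu\|_{\infty}\lesssim \delta^{-\beta}$ and hence $\|\tilde\nu\|_{TV}\lesssim_{K,n}\delta^{-\beta}$, which gives $|\hat{\tilde\nu}|$ an explicit Lipschitz constant $L\lesssim_{K,n}\delta^{-\beta}$.

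Next, for a scale $r>0$ to be chosen, I would convolve $|\hat{\tilde\nu}|$ with a smooth bump $\phi_{r}$ of width $r$ in $\eta$-space, incurring an additive error $O(Lr)$. By Fubini the smoothed main term equals
\[
\int(|\hat{\tilde\nu}|*\phi_{r})(x\xi)\,d\mu(x)=\int|\hat{\tilde\nu}(\eta)|\,G(\eta)\,d\eta,
\]
where $G(\eta)=\int\phi_{r}(x\xi-\eta)\,d\mu(x)\le r^{-n}\mu(\{x:\|x\xi-\eta\|\le Cr\})$. For the dominant coordinate $j^{*}$ with $|\xi^{j^{*}}|\ge\delta^{-1}/(2\sqrt{n})$, the set in the parentheses is contained in a slab of width $\lesssim r\delta$ in the $j^{*}$-direction, and for $r\le 1$ the hypothesis \eqref{equ:supmua} yields $\mu(\text{slab})\lesssim\delta^{\alpha}$, hence $\|G\|_{\infty}\lesssim r^{-n}\delta^{\alpha}$. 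Combined with $\int_{B(0,2\delta^{-1})}|\hat{\tilde\nu}|\lesssim\delta^{-\beta}$ the main term is $\lesssim r^{-n}\delta^{\alpha-\beta}$.

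Finally I would optimize $r$ to balance $r^{-n}\delta^{\alpha-\beta}$ against the Lipschitz error, yielding the claimed bound $\delta^{(\alpha-\beta)/(n+2)}$. The main obstacle is obtaining precisely this exponent: a crude balancing of the two terms with $L\sim\delta^{-\beta}$ gives only the weaker rate $(\alpha-(n+1)\beta)/(n+1)$, which is trivial when $\alpha-\beta$ is small, so the sharper exponent $(\alpha-\beta)/(n+2)$ requires softening the weight of $\delta^{-\beta}$ in the error. I expect this is achieved by an intermediate Cauchy--Schwarz step on $\int|\hat{\tilde\nu}(x\xi)|^{2}\,d\mu$ (using $|\hat\nu|^{2}=\widehat{\nu*\nu^{-}}$ to replace a Lipschitz estimate for $|\hat\nu|$ by a Lipschitz estimate for $|\hat\nu|^{2}$ against a denser measure) or by a two-scale smoothing in the spirit of Bourgain's original argument.
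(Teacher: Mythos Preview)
Your approach has a real gap, but it is in the bookkeeping rather than the geometry. The preliminary smoothing $\nu\mapsto\tilde\nu=\nu*\phi_\delta$ is unnecessary and is the source of your trouble: since $\nu$ is a (probability, or at least bounded-variation) measure supported on $B(0,K)$, the function $|\hat\nu|$ is already $O(K)$-Lipschitz, because $|\hat\nu(\xi)-\hat\nu(\xi')|\le 2\pi\|\xi-\xi'\|\int\|x\|\,d|\nu|(x)\le 2\pi K\|\nu\|_{TV}\,\|\xi-\xi'\|$. You instead pass through a density bound to conclude $\|\tilde\nu\|_{TV}\lesssim\delta^{-\beta}$ and hence $L\lesssim\delta^{-\beta}$; but if $\phi\ge 0$ with $\int\phi=1$ then $\|\nu*\phi_\delta\|_{TV}=\|\nu\|_{TV}$, so the total variation never blew up. With the correct constant $L=O_K(1)$, balancing your error $Lr$ against your main term $r^{-n}\delta^{\alpha-\beta}$ gives $r\sim\delta^{(\alpha-\beta)/(n+1)}$, which yields the (slightly stronger) bound $\delta^{(\alpha-\beta)/(n+1)}$ and in particular the stated one. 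No Cauchy--Schwarz or two-scale refinement is needed.

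For comparison, the paper does not smooth at all but thresholds. For $t\in(0,1)$ and $R=\delta^{-1}$ set $H_{R,t}=\{\xi\in B(0,R):|\hat\nu(\xi)|\ge t\}$, so that $\int|\hat\nu(x\xi)|\,d\mu(x)\le t+\mu\{x:x\xi\in H_{R,t}\}$. The $O(K)$-Lipschitz property of $|\hat\nu|$ gives $H_{R,t}+B(0,t/2K)\subset H_{R+1,t/2}$, and Chebyshev on \eqref{equ:nvfoudec} then bounds the covering number $\cal N_t(H_{R,t})\ll_K t^{-n-1}\delta^{-\beta}$. Each covering ball of radius $t$ pulls back under $x\mapsto x\xi$ into a coordinate slab of width $O_n(\delta)$, whose $\mu$-measure is $\le\delta^\alpha$ by \eqref{equ:supmua}; hence the second term is $\ll_K t^{-n-1}\delta^{\alpha-\beta}$, and $t=\delta^{(\alpha-\beta)/(n+2)}$ gives the lemma. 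Your smoothing scheme and the paper's level-set scheme use exactly the same three ingredients (Lipschitz bound on $\hat\nu$, coordinate non-concentration of $\mu$, $L^1$ control on $\hat\nu$), packaged differently; the level-set version loses an extra factor $t^{-1}$ from Chebyshev, which is why its exponent carries $n+2$ rather than $n+1$ in the denominator.
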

	The proof of Lemma \ref{lem:alpbet} is classic and will be given at the end of Section \ref{sec:foudec} for completeness.

	If $\sigma_{1,1}\geq \kappa_1$, iterating Lemma \ref{lem2kk} several times implies that $\sigma_{k,r}<\kappa_1$, where $k,r$ only depend on $\kappa_1$.
	 
	We will now apply Lemma \ref{lem:alpbet} to a well-chosen measure. Take $(\mu_k*\mu_k^-)^{(r)}$ as $\nu$, $\alpha=\kappa_0-\epsilon$, $\beta=\kappa_1$ and $\tau=\frac{\alpha-\beta}{n+2}$. For $\|\xi\|\in[\delta^{-1}/2,\delta^{-1}]$, by H\"older's inequality and Lemma \ref{lem:alpbet},
	\begin{align*}
	|\hat\mu_{k+1}(\xi)|^{2r}=\left|\int\hat\mu_k(x\xi)\dd\mu(x)\right|^{2r}\leq\int|\hat\mu_k(x\xi)|^{2r}\dd\mu(x)=\int|\hat{\nu}(x\xi)|\dd\mu(x)\leq_{k,n} \delta^{\tau}. 
	\end{align*}
	When $\delta$ is small enough, this yields \eqref{equ:exp2pi} with $$\epsilon_1=\frac{\tau}{4r}=\frac{\kappa_0-\epsilon-\kappa_1}{4(n+2)r}\geq\frac{\kappa_0/2-\kappa_1}{4(n+2)r}\geq \frac{\kappa_1}{4(n+2)r} ,$$ 
	where the last two inequalities are due to \eqref{equ:epskap} and $r$ only depends on $\kappa_0$.
\end{proof}
Now we will prove Lemma \ref{lem2kk}, where we use the $L^2$-flattening (Lemma \ref{lem:l2fla}). 
\begin{proof}[Proof of Lemma \ref{lem2kk}]
	Fix $k,r$ and set
	\begin{equation}\label{equ:nu}
	\nu=\frac{1}{2}\left(\murr{2k}+\murr{k}\right).
	\end{equation} 
	This is the key construction of this proof. The measure $\nu$ is a bridge connecting $\mu_{2k}$ to $\mu_k$. We summarize the properties of $\nu$ in the following lemma.
	\begin{lem}\label{lem:nucon}
		 The measure $\nu$ satisfies $(\delta/r,\kappa_0,2\epsilon)$ projective non concentration assumption when $\delta$ is sufficiently small depending on $k,r$.
	\end{lem}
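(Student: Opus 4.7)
The plan is to propagate the projective non concentration of $\mu$ through each operation used to build $\nu$: multiplicative convolution (to form $\mu_k$ and $\mu_{2k}$), symmetrization, iterated additive convolution, and the final convex combination.

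I would first establish a projective non concentration bound for $\mu_k$, and by the same argument for $\mu_{2k}$. Let $X_1,\dots,X_k$ be iid of law $\mu$, fix a unit vector $v\in\bb S^{n-1}$ and $a\in\R$, and condition on $X_2,\dots,X_k$: then $\langle v, X_1\cdots X_k\rangle=\langle w,X_1\rangle$ with $w^j=v^j X_2^j\cdots X_k^j$. Because $\supp\mu\subset[1/2,1]^n$, we have $|w|\in[2^{-(k-1)},1]$, so the non concentration of $\mu$ applied in the unit direction $w/|w|$ should give, in the ``large-scale'' regime $\rho/|w|\geq\delta$, a bound $\delta^{-\epsilon}(\rho/|w|)^{\kappa_0}\leq 2^{(k-1)\kappa_0}\delta^{-\epsilon}\rho^{\kappa_0}$. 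In the complementary regime $\rho/|w|<\delta$, the trivial upper bound $\delta^{\kappa_0-\epsilon}$ is still dominated by $2^{(k-1)\kappa_0}\delta^{-\epsilon}\rho^{\kappa_0}$ as long as $\rho\geq\delta/2^{k-1}$. Integrating in $X_2,\dots,X_k$ and choosing $\delta$ small enough (depending on $k,\epsilon,\kappa_0$) to absorb the factor $2^{(k-1)\kappa_0}$ into $\delta^{-\epsilon}$ would then yield $(\delta/2^{k-1},\kappa_0,2\epsilon)$ projective non concentration for $\mu_k$, and replacing $k$ by $2k$ gives the analogue for $\mu_{2k}$ at scale $\delta/2^{2k-1}$.

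Next, projective non concentration is stable under the remaining operations. The map $\lambda\mapsto\lambda^-$ preserves it trivially. For $\lambda*\lambda^-$ and any iterate $(\lambda*\lambda^-)^{(s)}$, conditioning on all but one of the summands reduces the bound on a shifted ball to the one for $\lambda$ itself at the same scale with the same constant. Applied to $\mu_k$ and $\mu_{2k}$, this gives $(\delta/2^{2k-1},\kappa_0,2\epsilon)$ projective non concentration for both $\murr{k}$ and $\mur{2k}$, and hence for the convex combination $\nu=\frac{1}{2}(\mur{2k}+\murr{k})$. Since in the iteration governed by Lemma~\ref{lem2kk} one can arrange $r\leq 2^{2k-1}$, so that $\delta/r\geq\delta/2^{2k-1}$, the claimed $(\delta/r,\kappa_0,2\epsilon)$ non concentration for $\nu$ follows.

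The main obstacle is the first step: the careful case analysis in the multiplicative regime, in particular ensuring that the trivial bound when $\rho/|w|<\delta$ is still dominated by the desired $\rho^{\kappa_0}$-type estimate all the way down to $\rho\simeq\delta/2^{k-1}$. The crucial ingredient there is the support condition $\supp\mu\subset[1/2,1]^n$, which provides the lower bound $|w|\geq 2^{-(k-1)}$ and is precisely what upgrades the input scale $\delta$ to the finer output scale $\delta/2^{k-1}$ after one multiplication; iterating through the $k$ multiplicative factors and then propagating through the additive convolutions is what produces the scale $\delta/r$ in the statement.
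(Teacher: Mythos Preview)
Your treatment of symmetrization, additive convolution, and convex combination is correct and matches the paper. The conditioning argument for the multiplicative convolution is also correct and yields $(\delta/2^{2k-1},\kappa_0,2\epsilon)$ projective non concentration for $\mu_{2k}$ (and the analogous statement for $\mu_k$).

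The gap is in your final step. The claim that ``in the iteration governed by Lemma~\ref{lem2kk} one can arrange $r\leq 2^{2k-1}$'' is false: the iteration sends $k\mapsto 2k$ and $r\mapsto 8r^2+4r$, so already after one step starting from $(k,r)=(1,1)$ one has $(k,r)=(2,12)$ with $12>2^{3}=8$, and the discrepancy worsens at each step. More importantly, Lemma~\ref{lem:nucon} is stated (and used) for \emph{arbitrary} $k,r$, so you cannot impose a relation between them. Your closing sentence that ``propagating through the additive convolutions is what produces the scale $\delta/r$'' is also unjustified: additive convolution preserves the scale, it does not refine it.

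The fix is a one-line observation you almost had. When $\rho\geq\delta_1=\delta/r$, you do not need $4^k\rho\geq\delta$ to apply the hypothesis on $\mu$: simply enlarge the ball, using
\[
(\pi_{v'})_*\mu\bigl(B_\RR(a',4^k\rho)\bigr)\leq (\pi_{v'})_*\mu\bigl(B_\RR(a',\max\{4^k,r\}\rho)\bigr)\leq \delta^{-\epsilon}\bigl(\max\{4^k,r\}\rho\bigr)^{\kappa_0},
\]
which is legitimate because $r\rho\geq r\delta_1=\delta$. The extra factor $\max\{4^k,r\}^{\kappa_0}$ is then absorbed into $\delta_1^{-2\epsilon}$ by taking $\delta$ small depending on $k,r$, which is exactly what the statement allows. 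This is precisely the route the paper takes; the role of $r$ in the scale $\delta/r$ is not to record a gain from convolution but to guarantee $r\rho\geq\delta$.
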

\begin{proof}
	Projective non concentration property is preserved under additive convolution. that is, if a probability measure $m$ satisfies projective non concentration, then $m*m'$ also satisfies projective non concentration for any probability measure $m'$. The reason is the following calculation. By Fubini's theorem, we have
	\[(\pi_v)_*(m*m')(B_{\R}(a,\rho))\leq \sup_{b\in\R} (\pi_v)_*m(B_{\R}(b,\rho)).\]
	
	 Hence we can drop the additive convolution, and for $\rho\geq\delta_1$, we have
	\begin{equation}\label{equ:anub}
	\sup_{a\in\RR,v\in\bb S^{n-1}}(\pi_v)_*\nu( B_\RR(a,\rho))\leq \frac{1}{2}\sup_{a,v}(\pi_v)_*\mu_k(B_\RR(a,\rho))+\frac{1}{2}\sup_{a,v}(\pi_v)_*\mu_{2k}(B_\RR(a,\rho)).
	\end{equation}
	The property that the support of $\mu$ is contained in $[1/2,1]^n$ and the projective non concentration of $\mu$ imply the left hand side of \eqref{equ:anub} is less than
	\begin{equation}\label{equ:mub}
	\sup_{a,v}(\pi_v)_*\mu(B_\RR(a,4^k\rho)) 
	\leq\delta^{-\epsilon} (\max\{4^k\rho,r\rho\})^{\kappa_0}\leq \delta_1^{-2\epsilon}\rho^{\kappa_0},
	\end{equation}
	where we have used $r\rho\geq r\delta_1=\delta$ for projective non concentration and the last inequality holds for $\delta$ small enough depending on $k,r$. Then \eqref{equ:procon} follows from \eqref{equ:anub} and \eqref{equ:mub}.
	The measure $\nu$ satisfies non concentration with $(\kappa_0,2\epsilon)$ at scale $\delta_1$. 
\end{proof}
\begin{rem}\label{rem:pro}
	This is a step where we really need projective non concentration.
\end{rem}
\begin{lem}\label{lem:bonmu}
	 Let $C>0$ and $r\in\bb N$. Let $\mu$ be a probability measure on $[1/C,1]^n\subset\R^n$. Let $\nu$ be defined by
	 \[\nu=\frac{1}{2}\left(\murr{2}+\murr{}\right).\]
	 We have
	 \begin{equation}\label{equ:bonmu}
	 \int_{B(0,2\delta^{-1})}|\hat{\nu}(\xi)|^2\dd\xi\sim_C\int_{B(0,2\delta^{-1})}|\hat{\mu}(\xi)|^{4r}\dd\xi,
	 \end{equation}
	 and
	 \begin{equation}\label{equ:bonmunu}
	 \int_{B(0,2\delta^{-1})}|\hat{\mu}_{2}(\xi)|^{r'}\dd\xi
	 \ll\int_{B(0,2\delta^{-1})}\int|\hat{\nu}(\xi)|^2|\hat{\nu}(y\xi)|^2\dd\nu(y)\dd\xi,
	 \end{equation}
	where $r'=8r^2+4r$.
\end{lem}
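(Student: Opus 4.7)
For \eqref{equ:bonmu} my plan is a direct pointwise comparison of Fourier transforms. Since $\widehat{(\mu*\mu^-)^{(k)}} = |\hat\mu|^{2k}$ and both $\mur{}$ and $\murr{}$ are symmetric nonnegative measures, we have $\hat\nu(\xi) = \frac{1}{2}(|\hat\mu(\xi)|^{2r}+|\hat\mu(\xi)|^{4r})$, which is real and nonnegative. The trivial bound $|\hat\mu|\leq 1$ gives $|\hat\mu|^{4r}\leq|\hat\mu|^{2r}$, hence $\frac{1}{2}|\hat\mu|^{2r}\leq\hat\nu\leq|\hat\mu|^{2r}$. Squaring and expanding $(|\hat\mu|^{2r}+|\hat\mu|^{4r})^2$ and dominating the cross and highest-power terms using $|\hat\mu|\leq 1$ yields the pointwise sandwich $\frac{1}{4}|\hat\mu(\xi)|^{4r}\leq|\hat\nu(\xi)|^2\leq|\hat\mu(\xi)|^{4r}$. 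Integrating over $B(0,2\delta^{-1})$ then gives \eqref{equ:bonmu} (the $\sim_C$ notation accounts for dimension-dependent constants, but the support constant $C$ only enters trivially here).

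For \eqref{equ:bonmunu} the plan is to reduce the LHS to a $\mu$-integral via Jensen, then exchange variables, and finally compare with the $\nu$-integral on the RHS. Starting from $\hat\mu_2(\xi)=\int\hat\mu(x\xi)\,\dd\mu(x)$, Cauchy--Schwarz yields $|\hat\mu_2(\xi)|^2\leq\int|\hat\mu(x\xi)|^2\,\dd\mu(x)$; taking the $2r$-th power and applying Jensen for the convex map $t\mapsto t^{2r}$ gives $|\hat\mu_2(\xi)|^{4r}\leq\int|\hat\mu(x\xi)|^{4r}\,\dd\mu(x)$. Using the factorization $r'=4r(2r+1)$, we then raise to the $(2r+1)$-th power to obtain
\[
|\hat\mu_2(\xi)|^{r'}\leq\Bigl(\int|\hat\mu(x\xi)|^{4r}\,\dd\mu(x)\Bigr)^{2r+1}.
\]
Expanding the $(2r+1)$-fold product as an integral against $\dd\mu^{\otimes(2r+1)}(x_0,\dots,x_{2r})$, integrating over $\xi\in B(0,2\delta^{-1})$, and performing the change of variables $\eta=x_0\xi$ (using $|\det x_0|\geq C^{-n}$ and $x_0 B\subset B$ since $x_0\in[1/C,1]^n$) isolates one factor $|\hat\mu(\eta)|^{4r}$ together with a $(2r)$-fold product $\prod_{i=1}^{2r}|\hat\mu(y_i\eta)|^{4r}$ with $y_i=x_i/x_0$. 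Relating $|\hat\mu(\eta)|^{4r}$ to $|\hat\nu(\eta)|^2$ via \eqref{equ:bonmu} then gives the first factor of the RHS integrand.

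The main obstacle will be the last step: matching the $(2r)$-fold product of $|\hat\mu(y_i\eta)|^{4r}$ integrated against a $\mu$-type measure (after the change of variables) with the single $\nu$-average $\int|\hat\mu(y\eta)|^{4r}\,\dd\nu(y)\sim\int|\hat\nu(y\eta)|^2\,\dd\nu(y)$ that appears in the RHS of \eqref{equ:bonmunu}. The plan is to exploit the specific decomposition $\nu=\tfrac{1}{2}(\mur{}+\murr{})$, in which $\murr{}=(\mu*\mu^-)^{(r)}$ carries precisely the additive-of-$\mu$ structure needed to absorb such products: the identity $\int|\hat\mu(y\eta)|^{4r}\,\dd\murr{}(y)=\widehat{\mur{}\cdot\murr{}}(\eta)$ (obtained by conditioning, since $\widehat{\mur{}}=|\hat\mu|^{4r}$) rewrites the $\nu$-average as a Fourier transform of a multiplicative convolution, and the $(2r)$-fold $\mu$-product arising from our Jensen expansion can be compared with this object via a further Jensen application and Plancherel at scale $\delta$ (using Lemma~\ref{lem:nudel}). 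The bookkeeping of exponents must be tuned so that the exponent $r'=8r^2+4r$ on the LHS matches exactly the double-Jensen combination $4r\cdot(2r+1)$ coming from the two steps, which is why this specific value of $r'$ is imposed.
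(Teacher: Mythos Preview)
Your proof of \eqref{equ:bonmu} rests on a misreading of $\nu$. In the lemma, the first summand is $(\mu_2*\mu_2^-)^{(r)}$, the $r$-fold additive convolution of the symmetrized \emph{multiplicative} square $\mu_2$, not $(\mu*\mu^-)^{(2r)}$. Consequently
\[
\hat\nu(\xi)=\tfrac{1}{2}\bigl(|\hat\mu_2(\xi)|^{2r}+|\hat\mu(\xi)|^{2r}\bigr),
\]
and there is no pointwise inequality $|\hat\mu_2(\xi)|\leq|\hat\mu(\xi)|$ in general, so your sandwich $\tfrac{1}{4}|\hat\mu|^{4r}\leq|\hat\nu|^2\leq|\hat\mu|^{4r}$ fails for the upper bound. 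The paper obtains the upper bound only after integration: from $|\hat\mu_2(\xi)|^{4r}\leq\int|\hat\mu(y\xi)|^{4r}\dd\mu(y)$ one changes variables $\eta=y\xi$ using $\supp\mu\subset[1/C,1]^n$ (so $|\det y|\geq C^{-n}$ and $yB(0,2\delta^{-1})\subset B(0,2\delta^{-1})$) to get $\int_{B(0,2\delta^{-1})}|\hat\mu_2|^{4r}\dd\xi\ll_C\int_{B(0,2\delta^{-1})}|\hat\mu|^{4r}\dd\xi$. This is precisely where the constant $C$ enters; your remark that ``$C$ only enters trivially'' is a symptom of the misidentification.

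The same misreading undermines your plan for \eqref{equ:bonmunu}. The paper's key step is not a $(2r+1)$-fold Jensen expansion but a Plancherel-type swap: since $|\hat\mu(x\xi)|^{2r}=\widehat{(\mu*\mu^-)^{(r)}}(x\xi)$ is the Fourier transform of a measure, one has $\int|\hat\mu(x\xi)|^{2r}\dd\mu(x)=\int\hat\mu(y\xi)\dd(\mu*\mu^-)^{(r)}(y)$, which converts the $\mu$-average into an average against $(\mu*\mu^-)^{(r)}$, i.e.\ exactly one half of $\nu$. A further H\"older then gives $|\hat\mu_2(\xi)|^{8r^2}\leq\int|\hat\mu(y\xi)|^{4r}\dd(\mu*\mu^-)^{(r)}(y)$, and multiplying by $|\hat\mu_2(\xi)|^{4r}$ (which, upon integration and the same change of variables, is controlled by $|\hat\mu(\xi)|^{4r}$) produces the integrand $|\hat\nu(\xi)|^2|\hat\nu(y\xi)|^2\dd\nu(y)$. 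Your final paragraph gestures at Plancherel but never executes this exchange of integration measure; the ``$(2r)$-fold product'' route you describe does not naturally land on $\dd\nu(y)$, and the sketch remains a plan rather than a proof.
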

The proof is an elementary computation, using Fourier transform and the H\"older inequality.
\begin{proof}
	The lower bound part of \eqref{equ:bonmu} is trivial, which is due to the definition of $\nu$.
	
	For a measure $m$ on $\bb R$ and $r\in\bb N$, we have a formula 
	\begin{equation}\label{equ:hatmu}
	|\hat{m}(\xi)|^{4r}=|\widehat{(m*m^-})^{(2r)}(\xi)|.
	\end{equation}
	By the multiplicative structure of $\bb R^n$, we have
	\begin{equation}\label{equ:hatmu2}
	|\hat{\mu}_{2}(\xi)|=\left|\int e^{2i\pi \l \xi, xy\r}\dd\mu_{}(x)\dd\mu(y)\right|=\left|\int\hat{\mu}(y\xi)\dd\mu(y)\right|.
	\end{equation}
	By the H\"older inequality,
	\begin{equation*}
		|\hat{\mu}_{2}(\xi)|^{4r}\leq \int|\hat{\mu}(y\xi)|^{4r}\dd\mu(y).
	\end{equation*}
	Integrating $\xi$ on $B(0,2\delta^{-1})$, we have
	\begin{equation*}
	\int_{B(0,2\delta^{-1})}|\hat{\mu}_{2}(\xi)|^{4r}\dd\xi\leq \int_{y\in\R^n,\xi\in B(0,2\delta^{-1})}|\hat{\mu}(y\xi)|^{4r}\dd\mu(y)\dd\xi. 
	\end{equation*}
	Due to $\supp\mu\subset[1/C,1]^n$, we have
	\begin{equation*}
	\begin{split}
	\int_{B(0,2\delta^{-1})}\int|\hat{\mu}(y\xi)|^{4r}\dd\mu(y)\dd\xi&\leq C^{n} \int\int_{B(0,2\delta^{-1})}|\hat{\mu}(y\xi)|^{4r}\dd( y\xi)\dd\mu(y)\\
	&= C^{n}\int_{B(0,2\delta^{-1})}|\hat{\mu}(\xi)|^{4r}\dd\xi,
	\end{split}
	\end{equation*}
	which implies that 
	\[	\int_{B(0,2\delta^{-1})}|\hat{\mu}_{2}(\xi)|^{4r}\dd\xi\ll_C\int_{B(0,2\delta^{-1})}|\hat{\mu}(\xi)|^{4r}\dd\xi. \]
	Therefore \eqref{equ:bonmu} follows from
	\begin{equation*}
	\int_{B(0,2\delta^{-1})}|\hat{\nu}(\xi)|^2\dd\xi=\frac{1}{4}\int_{B(0,2\delta^{-1})}\left(|\hat\mu(\xi)|^{2r}+|\hat\mu_{2}(\xi)|^{2r} \right)^2\dd\xi \ll_C\int_{B(0,2\delta^{-1})}|\hat{\mu}(\xi)|^{4r}\dd\xi.
	\end{equation*}	
	
	By \eqref{equ:hatmu2}, H\"older's inequality and \eqref{equ:hatmu}
	\begin{align*}
	|\hat{\mu}_{2}(\xi)|^{8r^2}&=\left|\int\hat{\mu}(x\xi)\dd\mu(x)\right|^{8r^2}\leq\left(\int|\hat{\mu}(x\xi)|^{2r}\dd\mu(x)\right)^{4r}
	\\
	&=\left|\int\widehat{(\mu*\mu^-)^{(r)}}(x\xi)\dd\mu(x)\right|^{4r}.
	\end{align*}
	By the Plancherel theorem and H\"older's inequality, the above inequality becomes
	\begin{equation}\label{equ:hatm}
	|\hat{\mu}_{2}(\xi)|^{8r^2}\leq \left|\int\hat{\mu}(y\xi)\dd\murr{}(y)\right|^{4r}\\
	\leq\int|\hat{\mu}(y\xi)|^{4r}\dd\murr{}(y).
	\end{equation}
	Let 
	\begin{equation*}
	A_{r}=\int_{y\in\R^n,\xi\in B(0,2\delta^{-1})}|\hat{\mu}(\xi)|^{4r}|\hat{\mu}(y\xi)|^{4r}\dd\murr{}(y)\dd\xi.
	\end{equation*}
	Therefore, by \eqref{equ:hatm} and \eqref{equ:hatmu}
	\begin{equation}\label{equ:bonk}
	\begin{split}
	\int_{B(0,2\delta^{-1})}|\hat{\mu}_{2}(\xi)|^{8r^2+4r}\dd\xi\leq\int_{B(0,2\delta^{-1})}|\hat{\mu}_{2}(\xi)|^{4r}\int|\hat{\mu}(y\xi)|^{4r}\dd\murr{}(y)\dd\xi=
	A_{r}.
	\end{split}
	\end{equation}	
	By \eqref{equ:nu}
	\begin{equation*}
	A_{r}
	\ll\int_{y\in\R^n,\xi\in B(0,2\delta^{-1})}|\hat{\nu}(\xi)|^2|\hat{\nu}(y\xi)|^2\dd\nu(y)\dd\xi.
	\end{equation*}
	Combined with \eqref{equ:bonk}, we obtain \eqref{equ:bonmunu}.
\end{proof}
	Lemma \ref{lem:nucon} and Lemma \ref{lem:bonmu} enable us to decrease the parameter $\sigma_{k,r}$ by $L^2$-flattening (Lemma \ref{lem:l2fla}). 
	
	We return to the proof of Lemma \ref{lem2kk}. By \eqref{equ:bonmu} and the hypothesis $\sigma_{k,2r}\geq\kappa_1$, we have
	\begin{equation}\label{equ:hatnu}
	\int_{B(0,2\delta^{-1})}|\hat \nu(\xi)|^2\dd\xi\gg_k \delta^{-\kappa_1}.
	\end{equation}
	 Due to $\supp\nu\in[-r,r]^n$ and \eqref{equ:hatnu}, taking $C=r$ in Lemma \ref{lem:nudel}, we have
	 \[\|\nu_{\delta_1}\|_2^2\gg_{r,k} \delta^{-\kappa_1}=r^{-\kappa_1}\delta_1^{-\kappa_1}. \]
	 When $\delta$ is small enough depending on $k,r,\kappa_1$, we have
	 \begin{equation}
	 	\|\nu_{\delta_1}\|^2_2\geq \delta_1^{-\kappa_1/2} \text{ and }\supp\nu\subset [-r,r]^n\subset [-\delta_1^{-2\epsilon},\delta_1^{-2\epsilon}]^n.
	 \end{equation}
	 Lemma \ref{lem:nucon} implies that $\nu$ satisfies assumption of $L^2$-flattening lemma with $\sigma_1=\kappa_1/2,\kappa=\kappa_0$ at scale $\delta_1$. Also notice that \eqref{equ:epskap} implies $2\epsilon\leq \epsilon(\kappa_1/2,\kappa_0)$. Then $L^2$-flattening (Lemma \ref{lem:l2fla}) implies
	 \begin{equation}\label{equ:l2fla}
	 	\int \|\nu_{\delta_1}*(m_y)_*\nu_{\delta_1}\|_2^2\dd\nu(y)\leq \delta_1^{2\epsilon}\|\nu_{\delta_1}\|^2_2.
	 \end{equation}
	 Using Lemma \ref{lem:nucon}, we obtain
	 \begin{equation}\label{equ:l2falnu}
	 	\int_{B(0,2\delta^{-1})}\int|\hat{\nu}(\xi)|^2|\hat{\nu}(y\xi)|^2\dd\nu(y)\dd\xi\leq_r \delta_1^{2\epsilon} \int_{B(0,2\delta^{-1})}|\hat{\nu}(\xi)|^2\dd\xi.
	 \end{equation}
	 Using Lemma \ref{lem:bonmu} with $\mu=\mu_k$ and $C=2^k$, by \eqref{equ:l2falnu}, we have
	\begin{align*}
	\int_{B(0,2\delta^{-1})}|\hat{\mu}_{2k}(\xi)|^{r'}\dd\xi
	\ll_{r,k}\delta_1^{2\epsilon} \int_{B(0,2\delta^{-1})}|\hat{\mu}_{k}(\xi)|^{4r}\dd\xi = \delta^{-\sigma_{k,2r}}\delta_1^{2\epsilon}\ll_r \delta^{2\epsilon-\sigma_{k,2r}}. 
	\end{align*}
	Therefore we have 
	$$\sigma_{2k,r'}\leq \sigma_{k,2r}-2\epsilon+C_{k,r}/\log \delta^{-1},$$
	with some constant $C_{k,r}>0$. For $\delta$ small enough, it follows that $\delta_{2k,r'}\leq\sigma_{k,2r}-\epsilon$.
\end{proof}
It remains to prove Lemma \ref{lem:nudel} and Lemma \ref{lem:alpbet}.

\begin{proof}[Proof of Lemma \ref{lem:nudel}]
	Recall that $\delta=2C\delta_1$. We observe that the Fourier transform of $P_\delta$ satisfies
	\[\widehat{P_\delta}(\xi)=\int P_\delta(x)e^{2\pi i\l \xi, x\r}\dd x=\int P_1(x/\delta)\delta^{-n}e^{2\pi i\l \xi, x\r}\dd x=\widehat{P_1}(\delta\xi). \]
	Due to $\widehat{P_1}(\xi)=\Re \int_{B(0,1)} e^{2\pi i\l \xi, x\r}\dd x |B(0,1)|^{-1}=\int_{B(0,1)}\cos (2\pi\l \xi, x\r)\dd x|B(0,1)|^{-1}$, we see that
	\begin{equation}\label{equ:pos}
		\widehat{P_1} \text{ is real, positive and bounded away from zero for }\xi \text{ in } B(0,1/8).
	\end{equation}
	
	We are going to prove \eqref{equ:nudel}. By \eqref{equ:pos}, we have $\widehat{P_1}(\delta_1\xi)\gg 1$ for $\xi$ in $B(0,1/\delta_1)$, which implies
	\begin{equation}
	\begin{split}
				\|\nu_{\delta_1}\|^2_2&=\int |\hat{\nu}(\xi)|^2|\widehat{P_{\delta_1}}(\xi)|^2\dd\xi=\int |\hat{\nu}(\xi)|^2|\widehat{P_1}(\delta_1\xi)|^2\dd\xi\\
				&\gg \int_{B(0,1/(8\delta_1))} |\hat{\nu}(\xi)|^2\dd\xi\geq\int_{B(0,2\delta^{-1})}|\hat{\nu}(\xi)|^2\dd\xi.
	\end{split}
	\end{equation}
	For the other direction of \eqref{equ:nudel}, let $\delta_2=2\delta=4C\delta_1$. Due to $1/\delta_2+1/\delta_2=1/\delta$, we have $P_{1/\delta}\gg P_{1/\delta_2}*P_{1/\delta_2}$, which implies
	\begin{equation}\label{equ:bo2}
	\begin{split}
	\int_{B(0,2\delta^{-1})} |\hat{\nu}(\xi)|^2\dd\xi&\gg\int|\hat{\nu}(\xi)|^2|P_{1/\delta}(\xi)|^2\delta^{-2n}\dd\xi\gg \int |\hat{\nu}(\xi)|^2|P_{1/\delta_2}*P_{1/\delta_2}(\xi)|^2\delta^{-2n}\dd\xi\\
	&=\delta^{-2n}\int|\nu*\widehat{P}^2_{1/\delta_2}(x)|^2\dd x.
	\end{split}
	\end{equation}
	By \eqref{equ:pos}, we have $\widehat{P}^2_{1/\delta_2}(x)=\widehat{P}^2_1(x/\delta_2)\gg \BB_{B(0,\delta_2)}(x)$. Combined with $|B(0,\delta_1)|\gg_C \delta^{-n}$, this implies
	$$\nu*\widehat{P}^2_{1/\delta_2}(x)\delta^{-n}\geq\nu*P_{\delta_1}(x)= \nu_{\delta_1}(x).$$ 
	Together with \eqref{equ:bo2}, we have the other direction of \eqref{equ:nudel}.
	
	The second inequality \eqref{equ:nudelcon} follows from the same argument. By Parseval's formula
	\begin{equation}\label{equ:nu*nu}
	\begin{split}
	\int \|\nu_{\delta_1}*(m_y)_*\nu_{\delta_1}\|_2^2\dd\nu(y)&=\int\int |\hat{\nu}_{\delta_1}(\xi)|^2|\widehat{(m_y)_*\nu_{\delta_1}}(\xi)|^2\dd\xi\dd\nu(y)\\
	&=\int\int|\hat\nu(\xi)|^2|\hat{\nu}(y\xi)|^2|\widehat{P}_{\delta_1}(y)^2||\widehat{P}_{\delta_1}(y\xi)|^2\dd\nu(y)\dd\xi\\
	&=\int\int|\hat\nu(\xi)|^2|\hat{\nu}(y\xi)|^2|\widehat{P}_1(\delta_1 y)^2||\widehat{P}_1(\delta_1 y\xi)|^2\dd\nu(y)\dd\xi.
	\end{split}
	\end{equation}
	For $y\in B(0,C)$ and $\xi\in B(0,2\delta^{-1})$, we have $\|\delta_1 y\xi\|\leq 1$. By \eqref{equ:pos}, the inequality \eqref{equ:nu*nu} implies \eqref{equ:nudelcon}.
\end{proof}

\begin{proof}[Proof of Lemma \ref{lem:alpbet}]
	Let $R=\delta^{-1}$. Consider $H_{R,t}=\{\xi\in B(0,R)||\hat{\nu}(\xi)|\geq t \}$, where $0<t<1$ will be fixed later. Since $\nu$ is supported on $B(0,K)$, the function $|\hat{\nu}|$ is $K$ Lipschitz. We have
	\[H_{R,t}+B\left(0,\frac{t}{2K}\right)\subset H_{R+1,\frac{t}{2}}. \]
	Hence by \eqref{equ:neiequ}
	$$\cal N_t(H_{R,t})\ll_K|H_{R,t}^{(\frac{t}{2K})}|t^{-n}\leq|H_{R+1,\frac{t}{2}}|t^{-n} .$$
	By the definition of $H_{R,t}$, Chebyshev's inequality and \eqref{equ:nvfoudec},
	\begin{equation}\label{equ:nthrt}
	\cal N_t(H_{R,t})\ll t^{-n-1}\int_{B(0,R+1)}|\hat{\nu}(\xi)|\dd\xi\ll R^{\beta}t^{-n-1}.
	\end{equation}
	
	From now on, suppose that $\|\xi\|\in[R/2,R]$. Let $H_{R,t}^\xi=\{x\in\bb R^n|x\xi\in H_{R,t} \}$. Then by $\|\xi\|\leq R$, we have $\|x\xi\|\leq R$ for $x\in\supp\mu\subset B(0,1)$, and 
	\begin{equation}\label{equ:nuxxi}
	\int|\hat{\nu}(x\xi)|\dd\mu(x)\leq t+\mu(H_{R,t}^\xi).
	\end{equation}
	We cover $H_{R,t}$ with balls of radius $t$ and we also get a cover of $H_{R,t}^\xi$ by $B^\xi(y,t)=\{x\in\RR^n|x\xi\in B(y,t) \}$. Due to $\|\xi\|\geq R/2$, there is at least one $j\in\{1,\dots,n \}$ such that $|\xi_j|\geq R/(2n)$. Therefore, we can replace $B^\xi(y,t)$ by a cylinder $\pi_j^{-1}B_\R(y,2n/R)$ and we obtain
	\[\mu B^\xi(y,t)=\mu\{x\in \RR^n|x\xi\in B(y,t) \}\ll\sup_{y\in\bb R,j=1,\dots, n}(\pi_j)_*\mu\{x|x\in B_\R(y,2n/R) \}. \]
	The above inequality combined with the hypothesis \eqref{equ:supmua} implies 
	\begin{equation}\label{equ:mubyt}
	\mu B^\xi(y,t)\ll R^{-\alpha}.
	\end{equation}
	Therefore by \eqref{equ:nthrt} and \eqref{equ:mubyt}
	\begin{equation}\label{equ:hrtxi}
	\mu(H_{R,t}^\xi)\leq \cal N_t(H_{R,t})\max\mu B^\xi(y,t) \ll_K R^{\beta-\alpha} t^{-(n+1)}.
	\end{equation} 
	If we take $t=R^{-\frac{\alpha-\beta}{n+2}}$, then the result follows from \eqref{equ:nuxxi} and \eqref{equ:hrtxi}.
\end{proof}
\section{Appendix}
The main purpose of the Appendix is to give a version of Theorem \ref{thm:sumfourier} (Proposition \ref{prop:sumproduct}) for its application in \cite{li2018spectralgap} to the products of random matrices.

In the application, we need to vary the measure. Using the same idea as in \cite[Propostion 3.2]{bourgain2017fourier}, we have a version for several different measures (Proposition \ref{prop:dissum}). The measures appearing in the random product of matrices are not compactly supported, hence we will relax the assumption on support in Proposition \ref{prop:sumproduct}.
\begin{prop}\label{prop:dissum1}
	Fix $\kappa>0$. Then there exist $k\in\bb N, \epsilon>0$ depending only on $\kappa$ such that the following holds for $\fren$ large enough. Let $\lambda$ be a Borel probability measure on $[\frac 1 2,1]^n\subset\RR^n$. Assume that for all $\rho\in[\fren^{-1},\fren^{-\epsilon}]$
		\begin{equation}
		\quad\sup_{a\in\bb R,v\in\bb S^{n-1}}(\pi_v)_*\lambda(B_\RR(a,\rho))=\sup_{a,v}\lambda\{x| \l v, x\r\in B_\RR(a,\rho) \}\leq \rho^{\kappa}.
		\end{equation}	
	Then for $\xi$ in $\RR^n$ with $\|\xi\|\in [\fren/2,\fren]$
	\[ \left|\int\exp(2i\pi\l  \xi, x_1\cdots x_k\r)\dd\lambda(x_1)\dots \dd\lambda(x_k)\right|\leq \fren^{-\epsilon}. \]
\end{prop}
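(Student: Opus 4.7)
The plan is to reduce Proposition \ref{prop:dissum1} directly to Theorem \ref{thm:sumfourier}, using Lemma \ref{lem:equivalent} as the bridge between the two equivalent forms of the non-concentration assumption. The hypothesis of Proposition \ref{prop:dissum1} is precisely the ``intrinsic'' form (2) of the non-concentration assumption of Lemma \ref{lem:equivalent} applied uniformly to each one-dimensional projection $(\pi_v)_*\lambda$, at scale $\delta := \fren^{-1}$; meanwhile Theorem \ref{thm:sumfourier} is stated with the ``scale-dependent'' form (1). So the task is essentially bookkeeping on the constants.

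First, given $\kappa > 0$, I set $\kappa_0 = \min\{\kappa,1\}$ and invoke Theorem \ref{thm:sumfourier} with parameter $\kappa_0$ to obtain constants $\epsilon_{\mathrm{thm}} > 0$, $\epsilon_1 > 0$ and an integer $k \in \bb N$ such that the Fourier-decay conclusion holds for probability measures on $[1/2,1]^n$ satisfying the $(\delta,\kappa_0,\epsilon_{\mathrm{thm}})$ projective non concentration \eqref{equ:noncon}. Then I declare the constants of Proposition \ref{prop:dissum1} to be this $k$ together with $\epsilon = \min\{\epsilon_{\mathrm{thm}}, \epsilon_1\}$, and set $\delta = \fren^{-1}$, so that $[\fren^{-1},\fren^{-\epsilon}] = [\delta,\delta^\epsilon]$ and $[\fren/2,\fren] = [\delta^{-1}/2,\delta^{-1}]$.

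Next, for each unit vector $v \in \bb S^{n-1}$, apply Lemma \ref{lem:equivalent} to the probability measure $\nu = (\pi_v)_*\lambda$ on $\R$: the hypothesis of Proposition \ref{prop:dissum1} is exactly assumption $(2)(\delta,\kappa,\epsilon)$ for $\nu$, so Lemma \ref{lem:equivalent} yields assumption $(1)(\delta,\min\{\kappa,1\},\epsilon) = (1)(\delta,\kappa_0,\epsilon)$ for $\nu$. Taking the supremum over $v$ and $a$ in the resulting bound gives
\[
\forall\rho \geq \delta,\qquad \sup_{a\in\R,\,v\in\bb S^{n-1}}(\pi_v)_*\lambda(B_\R(a,\rho)) \leq \delta^{-\epsilon}\rho^{\kappa_0} \leq \delta^{-\epsilon_{\mathrm{thm}}}\rho^{\kappa_0},
\]
the second inequality because $\epsilon \leq \epsilon_{\mathrm{thm}}$. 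Hence $\lambda$ satisfies the $(\delta,\kappa_0,\epsilon_{\mathrm{thm}})$ projective non concentration assumption of Theorem \ref{thm:sumfourier}.

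Finally, applying Theorem \ref{thm:sumfourier} directly yields
\[
\left|\int \exp\bigl(2i\pi\l \xi, x_1\cdots x_k\r\bigr)\,\dd\lambda(x_1)\cdots\dd\lambda(x_k)\right| \leq \delta^{\epsilon_1} = \fren^{-\epsilon_1} \leq \fren^{-\epsilon}
\]
for all $\|\xi\| \in [\fren/2,\fren]$, which is the desired Fourier-decay estimate. There is no real obstacle in this argument: all the analytic work has been done in Theorem \ref{thm:sumfourier} and its supporting machinery, and the only content here is the elementary rephrasing of the hypothesis via Lemma \ref{lem:equivalent}.
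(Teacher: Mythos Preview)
Your proposal is correct and follows exactly the paper's own approach: the paper's proof is the single line ``By Lemma \ref{lem:equivalent}, Theorem \ref{thm:sumfourier} implies the result,'' and you have simply spelled out the bookkeeping behind that sentence, including the choice $\kappa_0=\min\{\kappa,1\}$ and $\epsilon=\min\{\epsilon_{\mathrm{thm}},\epsilon_1\}$ needed to make both the hypothesis and the conclusion match.
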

\begin{proof}
	By Lemma \ref{lem:equivalent}, Theorem \ref{thm:sumfourier} implies the result.
\end{proof}	
We state a version with different measures.
\begin{prop}\label{prop:dissum}
	Fix $\kappa>0$. Then there exist $k\in\bb N, \epsilon>0$ depending only on $\kappa$ such that the following holds for $\fren$ large enough. Let $\lambda_1,\dots \lambda_k$ be Borel measures on $[\frac 1 2,1]^n\subset\RR^n$ with total mass less than 1. Assume that for all $\rho\in[\fren^{-1},\fren^{-\epsilon}]$ and $j=1,\dots,k$
	\begin{equation}
	\quad\sup_{a\in\bb R,v\in\bb S^{n-1}}(\pi_v)_*\lambda_j(B_\RR(a,\rho))=\sup_{a,v}\lambda_j\{x| \l v, x\r\in B_\RR(a,\rho) \}\leq \rho^{\kappa}.
	\end{equation}	
	Then for $\xi$ in $\RR^n$ with $\|\xi\|\in [\fren/2,\fren]$
	\[ \left|\int\exp(2i\pi\l  \xi, x_1\cdots x_k\r)\dd\lambda_1(x_1)\dots \dd\lambda_k(x_k)\right|\leq \fren^{-\epsilon}. \]
\end{prop}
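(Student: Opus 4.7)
The proof follows the structure of the proof of Theorem~\ref{thm:sumfourier}, replacing the iterated multiplicative convolution $\mu_k$ by the ordered multiplicative product $\Lambda := \lambda_1 \cdots \lambda_k$.

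I would first verify that projective non-concentration is preserved under multiplicative convolution with measures supported in $[1/2,1]^n$. Using $\pi_v(xy) = \pi_{vy}(x)$ and Fubini, for any probability measure $\eta$ on $[1/2,1]^n$ and any $\lambda$ satisfying non-concentration,
\[
(\pi_v)_*(\lambda \cdot \eta)(B_\R(a,\rho)) = \int (\pi_{vy})_*\lambda(B_\R(a,\rho))\,d\eta(y),
\]
where $\cdot$ denotes multiplicative convolution. For $y \in [1/2,1]^n$ one has $\|vy\| \geq 1/2$, so the integrand is bounded by $\sup_{v',a'}(\pi_{v'})_*\lambda(B_\R(a',2\rho))$ and non-concentration is preserved with slightly worse constants. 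Iterating, any partial multiplicative product $\lambda_{j_1} \cdots \lambda_{j_m}$ satisfies projective non-concentration with parameters depending only on $\kappa$ and $m$.

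With this in hand, I would redo the $L^2$-flattening, iteration, and H\"older-regularity machinery of Section~\ref{sec:appmul}. The $L^2$-flattening lemma (Lemma~\ref{lem:l2fla}), the Balog-Szemer\'edi-Gowers step (Proposition~\ref{prop:BSG}), and the sum-product estimate (Theorem~\ref{prop:sumpro}) all have hypotheses depending only on support and non-concentration of the involved measures. Thus one sets $\Lambda_r := (\Lambda * \Lambda^-)^{(r)}$ (the $r$-fold additive self-difference of $\Lambda$), forms the bridge measure $w := \tfrac{1}{2}(\Lambda_{2r} + \Lambda_r)$, and applies $L^2$-flattening to $w$. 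In the iteration analogue of Lemma~\ref{lem2kk}, the doubling $\mu_{2k} = \mu_k \cdot \mu_k$ is replaced by splitting $\{1,\ldots,2m\}$ into two halves: writing $\Lambda^{(1)} := \lambda_1 \cdots \lambda_m$ and $\Lambda^{(2)} := \lambda_{m+1} \cdots \lambda_{2m}$, one has $\lambda_1 \cdots \lambda_{2m} = \Lambda^{(1)} \cdot \Lambda^{(2)}$, and the analogue of Lemma~\ref{lem:bonmu} follows by Jensen's inequality. Iterating from $m = 1$ up to $m \approx k/2$, doubling the depth at each step, and closing with the H\"older regularity argument (Lemma~\ref{lem:alpbet}) applied with $\lambda_k$ as the regularity measure yields the claimed decay with $k,\epsilon$ depending only on $\kappa$.

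The main obstacle is that in the doubling step the outer integrating measure (built from $\Lambda^{(2)}$) differs from the inner measure (built from $\Lambda^{(1)}$), whereas Lemma~\ref{lem:l2fla} is stated for a single measure $\nu$ playing both roles. I expect this to be a bookkeeping adaptation: the proof of Lemma~\ref{lem:l2fla} uses the outer measure only to discard $y$ with small coordinates (where non-concentration suffices) and to verify hypothesis (ii) of Theorem~\ref{prop:sumpro} (for which projective non-concentration again suffices). Since both $\Lambda^{(1)}$ and $\Lambda^{(2)}$ inherit projective non-concentration with uniformly controlled constants from the $\lambda_j$'s via the first step, the argument carries over essentially verbatim.
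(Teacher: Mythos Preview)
Your approach is plausible in outline but takes a route that is \emph{vastly} more laborious than the paper's, and leaves several nontrivial technical points unresolved. The paper does not touch the $L^2$-flattening machinery at all for this proposition. Instead it reduces directly to the single-measure case (Proposition~\ref{prop:dissum1}) by a polarization trick: for $z\in\RR^k$ set $\lambda_z=\sum_j z_j\lambda_j$ and $F(z)=G(\lambda_z,\dots,\lambda_z)$ where $G(\lambda_1,\dots,\lambda_k)$ is the multilinear integral in the statement. Then $F$ is a polynomial of degree $k$ in $z$, and $k!\,G(\lambda_1,\dots,\lambda_k)$ is precisely the coefficient of $z_1\cdots z_k$. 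For $z$ with nonnegative entries, $\lambda_z/|z|$ is a sub-probability measure satisfying the same non-concentration hypothesis (with exponent $\kappa/2$, after a short case split on the total mass), so the single-measure result bounds $|F(z)|\leq |z|^k\tau^{-\epsilon}$. Since the coefficients of a polynomial of bounded degree are controlled by finitely many of its values (equivalence of norms on a finite-dimensional space), the bound on $G$ follows with a constant depending only on $k$.

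By contrast, your plan to rerun the entire iteration of Section~\ref{sec:appmul} for heterogeneous measures would require: (a) an asymmetric $L^2$-flattening lemma in which the inner and outer measures differ, (b) an asymmetric analogue of Lemma~\ref{lem:bonmu} where $\mu_k$ and $\mu_{2k}$ are replaced by two different partial products, and (c) a bridge-measure construction adapted to this asymmetry. You acknowledge (a) as ``bookkeeping'' but do not carry it out; your description of the bridge measure $w=\tfrac12(\Lambda_{2r}+\Lambda_r)$ is incoherent as written, since $\Lambda_{2r}$ and $\Lambda_r$ are built from the \emph{same} $\Lambda$ and differ only in additive power, whereas the role of the bridge in Lemma~\ref{lem2kk} is to interpolate between two \emph{multiplicative} depths. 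These issues are probably fixable, but the polarization argument avoids all of them in a few lines.
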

\begin{proof}
	The proof is the same as the argument in \cite[Propostion 3.2]{bourgain2017fourier}. For completeness, we give a sketch here.
	
	We first verify that if the mass of the measure $\lambda$ is less than $1$, the result of Proposition \ref{prop:dissum1} also holds. Let $\epsilon_2$ be given by Proposition \ref{prop:dissum1} when the regular exponent equals $\kappa/2$. We distinguish two cases
	\begin{itemize}
		\item If $\lambda(\R^n)\geq \tau^{-\epsilon_2\kappa/2 }$, then replace $\lambda$ by $\lambda'=\lambda/\lambda(\R^n)$. For $\rho\in[\tau^{-1},\tau^{-\epsilon_2}]$, we have
		\[\sup_{a\in\bb R,v\in\bb S^{n-1}}(\pi_v)_*\lambda'(B_\RR(a,\rho))\leq \tau^{\epsilon_2\kappa/2}\sup_{a\in\bb R,v\in\bb S^{n-1}}(\pi_v)_*\lambda(B_\RR(a,\rho))\leq \tau^{\epsilon_2\kappa/2}\rho^\kappa. \]
		Due to $\rho\leq \tau^{-\epsilon_2}$, we have $\tau^{\epsilon_2\kappa/2}\rho^{\kappa}\leq \rho^{\kappa/2}$. The measure $\lambda'$ satisfies non concentration with $\kappa/2$. By Proposition \ref{prop:dissum1}, we have the result.
		\item If $\lambda(\R^n)< \tau^{-\epsilon_2\kappa/2 }$, then we have
		\[ \int\exp(2i\pi\l  \xi, x_1\cdots x_k\r)\dd\lambda(x_1)\dots \dd\lambda(x_k)|\leq \tau^{-k\epsilon_2\kappa/2 }. \]
	\end{itemize}
	Hence we can take $\epsilon=\min\{\epsilon_2,k\epsilon_2\kappa/2 \}$.
	
	Then we want to prove that the result holds for different measures. For $z\in \RR^k$, let $\lambda_z=\sum_{1\leq j\leq k}z_j\lambda_j$. Let
	\[G(\lambda_1,\cdots,\lambda_k)=\int\exp(2i\pi\l  \xi, x_1\cdots x_k\r)\dd\lambda_1(x_1)\dots \dd\lambda_k(x_k) \]
	and
	$$F(z)=F(z_1,\cdots, z_k)=G(\lambda_z,\cdots,\lambda_z).$$
	Then $F(z)$ is polynomial of $k$ variables of degree $k$, and $k!G(\lambda_1,\cdots,\lambda_k)$ is the coefficient of $z_1\cdots z_k$ in $F(z)$. For $z\in \RR_{\geq 0}^k$, we have
	\[|F(z)|\leq |z|^k\tau^{-\epsilon}, \text{ where }|z|=\sum_{1\leq j\leq k}|z_j|, \]
	by using the result of the first part with $\lambda=\frac{1}{|z|}\lambda_z$.
	\begin{lem}
		Let $F$ be a polynomial of $k$ variables of degree less than $n$. Let $h(F)$ be the maximum of the absolutely value of the coefficients in $F$. Then
		\[h(F)\leq O_{k,n}\sup_{z\in\{0,\cdots,n\}^k\subset\R^k }\{|F(z)| \} . \]
	\end{lem}
	In this lemma, we define two norms on the space of polynomials  of $k$ variable of degree less than $n$. The inequality is due to the equivalence of norms on finite dimensional vector space.
	Hence
	\[|G(\lambda_1,\cdots,\lambda_k)|\ll_k h(F)\ll_k \tau^{-\epsilon}. \]
	The proof is complete.
\end{proof}
Now we will give another version of Fourier decay of multiplicative convolution, which relaxes the assumption on the support of $\lambda_j$.
\begin{prop}
	\label{prop:sumproduct}
		Fix $\kappa_0>0$. Let $C_0>0$. Then there exist $\epsilon_2$ and $k\in\bb N$ depending only on $\kappa_0$ such that the following holds for $\fren$ large enough depending on $C_0,\kappa_0$.  Let $\lambda_1,\dots \lambda_k$ be Borel measures on $\RR^n$ supported in $([-\fren^{\epsilon_3},-\fren^{-\epsilon_3}]\cup[\fren^{-\epsilon_3},\fren^{\epsilon_3}])^n$ with total mass less than $1$, where $\epsilon_3=\min\{\epsilon_2,\epsilon_2\kappa_0,1\}/10k$. Assume that for all $\rho\in[\fren^{-2},\fren^{-\epsilon_2}]$ and $j=1,\dots,k$
		\begin{equation}
		\quad\sup_{a\in\bb R,v\in\bb S^{n-1}}(\pi_v)_*\lambda_j(B_\RR(a,\rho))=\sup_{a,v}\lambda_j\{x| \l v, x\r\in B_\RR(a,\rho) \}\leq C_0\rho^{\kappa_0}.
		\end{equation}	
		
	Then for all $\varsigma\in\RR^n, \|\varsigma\|\in[\fren^{3/4},\fren^{5/4} ]$ we have
	\begin{equation}
	\left|\int\exp(2i\pi\l \varsigma, x_1\cdots x_k\r)\dd\lambda_1(x_1)\cdots\dd\lambda_k(x_k) \right|\leq \fren^{-\epsilon_2}.
	\end{equation}
\end{prop}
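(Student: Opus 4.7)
My plan is to reduce Proposition \ref{prop:sumproduct} to Proposition \ref{prop:dissum} by a dyadic decomposition of each $\lambda_j$ together with a coordinate-wise rescaling that brings each piece onto $[1/2,1]^n$. Since $\supp\lambda_j\subset([-\fren^{\epsilon_3},-\fren^{-\epsilon_3}]\cup[\fren^{-\epsilon_3},\fren^{\epsilon_3}])^n$, partition the support of $\lambda_j$ according to a sign pattern $s_j\in\{\pm 1\}^n$ and an $n$-tuple of dyadic scales $D_j=(D_j^1,\dots,D_j^n)$ with each $D_j^i\in[\fren^{-\epsilon_3},\fren^{\epsilon_3}]$, declaring $|x^i|\in[D_j^i/2,D_j^i]$ and $\mathrm{sign}(x^i)=s_j^i$. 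This splits $\lambda_j=\sum_{s_j,D_j}\lambda_j^{s_j,D_j}$ into $O_{n}(\epsilon_3\log\fren)^n$ pieces, so the $k$-fold integral expands as a sum of $O_{n,k}(\log\fren)^{nk}$ analogous integrals, and it suffices to bound each one by $\fren^{-2\epsilon_2}$.

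For a fixed choice $(s_1,D_1),\dots,(s_k,D_k)$, define $\tilde\lambda_j$ as the pushforward of $\lambda_j^{s_j,D_j}$ by the coordinate-wise map $x\mapsto(|x^1|/D_j^1,\dots,|x^n|/D_j^n)$. Then $\tilde\lambda_j$ is supported in $[1/2,1]^n$ with total mass at most $1$. Writing $x_j^i=s_j^i D_j^i y_j^i$ and $\Pi^i=\prod_{j=1}^k D_j^i$, $s^i=\prod_{j=1}^k s_j^i$, one computes
\[
\int\exp(2i\pi\l\varsigma,x_1\cdots x_k\r)\prod_j\dd\lambda_j^{s_j,D_j}(x_j)
=\int\exp(2i\pi\l\varsigma',y_1\cdots y_k\r)\prod_j\dd\tilde\lambda_j(y_j),
\]
where $\varsigma'$ is obtained from $\varsigma$ by coordinate-wise multiplication by $s^i\Pi^i$. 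Since each $\Pi^i\in[\fren^{-k\epsilon_3},\fren^{k\epsilon_3}]$, we get $\|\varsigma'\|\in[\fren^{3/4-k\epsilon_3},\fren^{5/4+k\epsilon_3}]$. Pick $\fren'$ with $\|\varsigma'\|\in[\fren'/2,\fren']$; then $\fren'=\fren^\alpha$ for some $\alpha\in[3/4-k\epsilon_3,\,5/4+k\epsilon_3]$.

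The core step is to check that $\tilde\lambda_j$ inherits projective non-concentration at scale $\fren'$ with exponent $\kappa_0/2$. For a unit $v\in\bb S^{n-1}$, $(\pi_v)_*\tilde\lambda_j$ coincides, after normalization, with the projection of $\lambda_j^{s_j,D_j}$ onto the direction $w/\|w\|$ with $w^i=v^i/D_j^i$, so $\|w\|\in[\fren^{-\epsilon_3},\fren^{\epsilon_3}]$. The hypothesis on $\lambda_j$ applied at scale $\rho\|w\|$ yields
\[
\sup_{a,v}(\pi_v)_*\tilde\lambda_j(B_\R(a,\rho))\leq C_0\fren^{\epsilon_3\kappa_0}\rho^{\kappa_0}
\]
for $\rho\in[\fren'^{-1},\fren'^{-\epsilon}]$, where $\epsilon$ is given by Proposition \ref{prop:dissum} applied with exponent $\kappa_0/2$. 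The range condition $\rho\|w\|\in[\fren^{-2},\fren^{-\epsilon_2}]$ needed to invoke the original hypothesis on $\lambda_j$ is satisfied provided $\epsilon_3\ll\epsilon_2\ll\epsilon$ and $\fren$ is large, and writing $\rho=\fren^{-\beta}$ with $\beta\geq\alpha\epsilon$ one checks $C_0\fren^{\epsilon_3\kappa_0}\rho^{\kappa_0}\leq\rho^{\kappa_0/2}$ for $\fren$ large.

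Having verified non-concentration, Proposition \ref{prop:dissum} bounds each rescaled piece by $\fren'^{-\epsilon}\leq\fren^{\alpha\epsilon}\leq\fren^{-\epsilon/2}$ using $\alpha\geq 3/4-k\epsilon_3$. Summing the $O_{n,k}(\log\fren)^{nk}$ pieces, one obtains a total bound of $O_{n,k}(\log\fren)^{nk}\fren^{-\epsilon/2}\leq\fren^{-\epsilon_2}$ for $\epsilon_2$ chosen sufficiently small relative to $\epsilon$, and for $\fren$ large depending on $n,k,C_0,\kappa_0$. The only real obstacle is the careful hierarchy of constants $\epsilon_3\ll\epsilon_2\ll\epsilon$, along with the verification that the polylogarithmic explosion in the number of dyadic pieces is absorbed by the polynomial decay gain from Proposition \ref{prop:dissum}; no new sum-product input is required beyond what is already established.
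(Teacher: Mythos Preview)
Your proposal is correct and follows essentially the same route as the paper: split each $\lambda_j$ by orthant and by coordinate-wise dyadic scale, rescale each piece onto $[1/2,1]^n$, verify projective non-concentration with exponent $\kappa_0/2$ for the rescaled measures at the new frequency scale $\fren'$, apply Proposition~\ref{prop:dissum}, and absorb the $O((\log\fren)^{nk})$ combinatorial loss into the polynomial gain. The only cosmetic difference is that the paper first reduces to the positive orthant and then does the dyadic decomposition, whereas you carry the sign pattern $s_j$ along with the dyadic scale $D_j$ in a single step; the constant bookkeeping (your hierarchy $\epsilon_3\ll\epsilon_2\ll\epsilon$) matches the paper's choice $\epsilon_2=\epsilon/4$.
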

\begin{rem}
	The proof is tedious, but the idea is clear. If the non concentration assumption is valid in some large range, then there is some place to rescale a little the measure and the result still holds. We only need to find some exponent $\epsilon_3$ carefully. 
\end{rem}
\begin{proof}
	It is sufficient to prove the case that $\supp\lambda_j\in [\fren^{-\epsilon_3},\fren^{\epsilon_3}]^n$. In face, we can divide each measure into $\lambda_j=\sum_{m\in (\bb Z/2\bb Z)^n}\lambda_j^m$, where $\lambda_j^m$ is the unique part of $\lambda_j$ whose support is in the same orthant as $m$ and we identify $(\bb Z/2\bb Z)^n$ with $\{-1,1\}^n\in \RR^n$. Then 
	\begin{align*}
		&\ \left|\int\exp(2i\pi\l \varsigma, x_1\cdots x_k\r)\dd\lambda_1^{m_1}(x_1)\cdots\dd\lambda_k^{m_k}(x_k) \right|
		\\&=\left|\int\exp(2i\pi(\l \varsigma m_1\cdots m_k, x_1\cdots x_k\r)\dd\lambda_1^{m_1}(m_1x_1)\cdots\dd\lambda_k^{m_k}(m_kx_k) \right|.
	\end{align*}
	We know that the support of measure $(m_j)_*\lambda_j$ is in $[\fren^{-\epsilon_3},\fren^{\epsilon_3}]^n$. Hence by the result of the case $\supp\lambda_j$ positive, we have the result with a constant $2^{nk}$.
	
	Let $\epsilon$ as in Proposition \ref{prop:dissum} with $\kappa=\kappa_0/2$, and let $\epsilon_2=\epsilon/4$.
	
	Divide $[\fren^{\epsilon_3},\fren^{-\epsilon_3}]^n$ into $[2^l,2^{l+1}]:=[2^{l_1},2^{l_1+1}]\times\cdots [2^{l_n},2^{l_n+1}]$ with $l\in \bb Z^n$. We rescale the measure in each interval to $[1/2,1]^n$. Let $\lambda_j^l(A)=\lambda_j|_{[2^{l-1},2^l]}(2^{l}A)$.  For $\rho\in[\fren^{-3/2},\fren^{-\epsilon_2/2} ]$ we have 
	\begin{align}\label{equ:piv}
	(\pi_v)_*\lambda_j^l(B_\RR(a,\rho))\leq	(\pi_{v'})_*\lambda_j(\|2^{-l}v\|^{-1}B_\RR(a,\rho)),
	\end{align}
	where $v'=2^{-l}v/\|2^{-l}v\|$.
	The inequality $\|2^{-l}v\|\in[\fren^{-\epsilon_3},\fren^{\epsilon_3}]$ implies that we have $\|2^{-l}v\|^{-1}\rho\in [\fren^{-3/2-\epsilon_3},\fren^{-\epsilon_2/2+\epsilon_3}]\subset[\fren^{-2},\fren^{-\epsilon_2/4}]$. Due to $\rho^{-1/4}\geq \fren^{\epsilon_2/8}\geq \fren^{\epsilon_3}\geq \|2^{-l}v\|^{-1}$ for $\rho\in[\fren^{-3/2},\fren^{-\epsilon_2/2} ]$, by \eqref{equ:piv} we have
	\begin{equation}\label{equ:dimml}
	(\pi_v)_*\lambda_j^l(B_\RR(a,\rho))\leq C_0 (\|2^{-l}v\|^{-1}\rho)^{\kappa_0} \leq \rho^{\kappa_0/2},
	\end{equation}
	for $\fren$ large enough depending on $C_0$.
	
	Summing up over $\|l\|\leq \epsilon_3\log_2\fren$, we have
	\begin{align*}
	&\left|\int\exp(2i\pi\l \varsigma, x_1\cdots x_k\r)\dd\lambda_1(x_1)\cdots\dd\lambda_k(x_k) \right|\\
	&\leq
	\sum_{l^j\in\bb Z^n,\|l^j\|\leq \epsilon_3\log\tau}\left|\int\exp(2i\pi\l \varsigma, x_1\cdots x_k\r)\dd\lambda_1^{l^1}(2^{-l^1}x_1)\cdots\dd\lambda_k^{l^k}(2^{-l^k}x_k) \right|
	\\
	&= \sum_{l^j\in\bb Z^n,\|l^j\|\leq \epsilon_3\log\tau}\left|\int\exp(2i\pi\l \varsigma, 2^{l^1+\cdots+l^k} y_1\cdots y_k\r)\dd\lambda_1^{l^1}(y_1)\cdots\dd\lambda_k^{l^k}(y_k) \right|.
	\end{align*}
	Let $\fren_1=\|\varsigma 2^{l^1+\cdots+l^k}\|$, then $\fren_1\in[\fren^{3/4-k\epsilon_3},\fren^{5/4+k\epsilon_3}]$. Then we have $[\fren_1^{-1},\fren_1^{-\epsilon_2}]\subset[\fren^{-3/2},\fren^{-\epsilon_2/2}]$. The assumption of Proposition \ref{prop:dissum} is verified by \eqref{equ:dimml} with $\fren$ replaced by $\fren_1$. Therefore 
	\begin{align*}
	&\sum_{l^j}\left|\int\exp(2i\pi\l \varsigma, 2^{l^1+\cdots+l^k} y_1\cdots y_k\r)\dd\lambda_1^{l^1}(y_1)\cdots\dd\lambda_k^{l^k}(y_k) \right|\leq \sum_{l^j\in\bb Z^n,\|l^j\|\leq \epsilon_3\log\tau}\|\varsigma 2^{l^1+\cdots+l^k}\|^{-\epsilon_2}\\
	&\leq (2\epsilon_3\log_2\tau)^{kn}(\tau^{3/4-k\epsilon_3})^{-\epsilon}\leq \tau^{-\epsilon/4},
	\end{align*}
	when $\tau$ is large enough depending on $k,n,\epsilon$. The proof is complete.
\end{proof}
\subsection{The case of $\C$}
In this part, we consider the case of $\C$. The decay of Fourier transform of multiplicative convolution of measures was already indicated in \cite{bourgain2017fourier}. They said that by using the sum-product estimate on $\C$ \cite{bourgain_spectral_2012} and the same method as the real case we can obtain the complex case. Here we indicate that our approach can also give the result for $\C$. This result will be used in a joint work with Frédéric Naud and Wenyu Pan \cite{LNP} on the Fourier decay of Patterson-Sullivan measures associated to Kleinian Schottky groups.

Let $\mu$ be a Borel probability measure on $\C$. For $\xi$ in $\C$, we define the Fourier transform of $\mu$ by
\[\hat{\mu}(\xi):=\int e^{2i\pi\Re(\xi z)}\dd\mu(z). \]
As two algebras of dimension 2, the algebras $\R^2$ and $\C$ are different in their product structure.

We need a version of discretized sum-product  estimate on $\C$.
\begin{prop}\label{prop:sumproC}
	Given $\kappa>0,\sigma\in(0,2)$, there exists $\epsilon>0$ such that for all $\delta>0$ sufficiently small, if $A, X\subset B_\C(0,\delta^{-\epsilon})$ satisfy the following $(\delta,\kappa,\sigma,\epsilon)$ assumption:
	
	(i)
	\begin{equation*}
	\forall\rho\geq\delta,\ \nrho(A)\geq\delta^\epsilon\rho^{-\kappa},
	\end{equation*} 
	
	(ii) $A$ is $\delta^\epsilon$ away from $\R$ in $\C$,
	
	(iii)
	\begin{equation*}
	\forall\rho\geq\delta,\ \nrho(X)\geq\delta^\epsilon\rho^{-\kappa},
	\end{equation*}
	
	(iv) $\ndelta(X)\leq \delta^{-(n-\sigma)-\epsilon}$.
	
	Then
	\begin{equation*}
	\ndelta(X+X)+\sup_{a\in A}\ndelta(X+aX)\geq \delta^{-\epsilon}\ndelta(X).
	\end{equation*}
\end{prop}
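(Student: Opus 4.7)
The plan is to follow, step by step, the proof of Theorem \ref{prop:sumpro}, adapting each ingredient from the reducible setting $(\R^*)^n\curvearrowright\R^n$ to the irreducible setting $\C^*\curvearrowright\C$. The three stages are: (a) growth of $\langle A\rangle_s$ to a macroscopic ball, via He--de Saxcé; (b) growth under the action of such a ball, via a Fubini-type energy estimate; and (c) combining them through the ring structure of the ``good elements'' $S_\delta(X,K)$.

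For stage (a), I would invoke the general form of \cite[Thm.\ 2.3]{he_sum-product_2018} applied to the representation $\C^*\curvearrowright\C$, seen as a $2$-dimensional $\R$-linear representation. Because this representation is irreducible, there are no proper coordinate invariant subspaces to worry about, so the analogue of condition (iii) in Proposition \ref{prop:sumrep} is automatically handled provided $X$ is not trivial. It remains to identify the proper stabilizers: for any real line $W\subset\C$ that is not $\C^*$-invariant, $\mathrm{Stab}_\C(W)=\{a\in\C\mid aW\subset W\}=\R$, and hence $\mathrm{Stab}_{\C^*}(W)=\R^*$. Therefore ``$A$ away from proper stabilizers'' becomes exactly ``$A$ away from $\R$ inside $\C$'', which is precisely hypothesis (ii). Taking $X=A-A$ (after shrinking $U$) and mimicking the proof of Proposition \ref{prop:sumproalg} then yields, for every $\epsilon_0>0$, some $s\in\bb N$ and some $\epsilon_1>0$ such that $B_\C(0,\delta^{\epsilon_0})\subset\langle A\rangle_s+B_\C(0,\delta)$ whenever $\epsilon\le\epsilon_1$.

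For stage (b), I would prove a complex analogue of Lemma \ref{lem:plusreg}: if $X$ satisfies (iii) and (iv) and if $\mathcal N_\delta(X+X)\le\delta^{-\epsilon}\mathcal N_\delta(X)$, then $\sup_{a\in B_\C(1,1/2)}\mathcal N_\delta(X+aX)\ge\delta^{-\epsilon}\mathcal N_\delta(X)$. The Fubini argument proceeds with $a$ uniform on $B_\C(1,1/2)$ and $\varphi_a(x,y)=x+ay$, reducing the task to bounding the additive energy $\bb E(\omega_\delta(\varphi_a,X\times X))$. The small-ball bound on $X$ obtained from (iii) and the assumed doubling inequality reads
\[
\max_{b\in\C}\mathcal N_\delta(X\cap B_\C(b,\rho))\ \le\ \delta^{-2\epsilon}\rho^\kappa\mathcal N_\delta(X),
\]
so the pairs $(y,y')$ with $|y-y'|<\rho$ number at most $\delta^{-2\epsilon}\rho^\kappa\mathcal N_\delta(X)^2$. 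For the complementary pairs, note that $a\mapsto a(y-y')$ is a $\C$-linear isomorphism, whence
\[
\bb P\bigl\{|a(y-y')+(x-x')|\le 5\delta\bigr\}\ \ll\ \delta^2/|y-y'|^2\ \ll\ \delta^2\rho^{-2}
\]
when $|y-y'|\ge\rho$. Balancing $\delta^{2}\rho^{-2}\mathcal N_\delta(X)^4$ against $\delta^{-2\epsilon}\rho^\kappa\mathcal N_\delta(X)^3$ with the choice $\rho=\delta^{(2-\sigma)/(2+\kappa)}$ and using (iv) gives $\bb E(\omega_\delta)\ll\mathcal N_\delta(X)^3\delta^{\epsilon'}$, hence by Jensen the required supremum bound.

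For stage (c), the $S_\delta$ calculus of Lemma \ref{lem:30} carries over without change, as it only uses the ring structure of $\mathrm{End}(\C)$. Assuming the proposition fails yields $A\subset S_\delta(X,\delta^{-\epsilon})$; stage (a) gives $B_\C(0,\delta^{\epsilon_0})\subset\langle A\rangle_s+B_\C(0,\delta)$; combining via Lemma \ref{lem:30}(i),(ii),(iii) produces $B_\C(1,1/2)\subset S_\delta(X,\delta^{-O(s)\epsilon-O(\epsilon_0)})$, which contradicts stage (b) once $\epsilon_0$ and $\epsilon$ are chosen small enough in that order. The main obstacle is stage (b): unlike the $\R^n$ case, the non-concentration hypothesis (iii) is not coordinate-wise but $2$-dimensional, so one must be careful that this weaker control still yields a usable bound on small-difference pairs, and that the $\C$-linear structure of multiplication (rather than the coordinate-wise structure in $\R^2$) gives the $\delta^2/|y-y'|^2$ probability estimate with the correct exponent to balance against the $\rho^\kappa$ count.
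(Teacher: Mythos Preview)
Your argument is correct and complete, but it takes a substantially longer route than the paper's. The paper does not adapt the three-stage proof of Theorem~\ref{prop:sumpro} to $\C$; instead it simply invokes He's earlier result \cite[Theorem~3]{he_discretized_2016} (stated here as Proposition~\ref{prop:sumprolin}) for subsets $A\subset\mathrm{End}(\R^2)$ acting on $\R^2$, which already contains stages~(a)--(c) in the irreducible case. All that remains is to embed $\C$ into $\mathrm{End}(\R^2)$ via $x+iy\mapsto\begin{pmatrix}x&-y\\y&x\end{pmatrix}$ and to check that hypothesis~(ii) of Proposition~\ref{prop:sumproC} implies hypothesis~(ii) of Proposition~\ref{prop:sumprolin}: given any real line $W\subset\C$ and $a\in A$ with $d(a,\R)\ge\delta^\epsilon$, one takes a unit vector $w\in W$ and observes that $d(aw,W)=d(a,\R)\ge\delta^\epsilon$ by rotation invariance. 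That is the entire proof in the paper. Your approach is more self-contained relative to this paper's framework (it reuses Proposition~\ref{prop:sumrep}, Lemma~\ref{lem:plusreg}, and Lemma~\ref{lem:30} rather than an external black box), but it effectively re-derives the special case of He's 2016 theorem that the paper just quotes. One minor slip: with the convention $\ndelta(X)\le\delta^{-(2-\sigma)-\epsilon}$ of condition~(iv), the balancing scale in stage~(b) should be $\rho=\delta^{\sigma/(2+\kappa)}$ rather than $\delta^{(2-\sigma)/(2+\kappa)}$; this does not affect the validity of the argument.
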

This is a consequence of \cite[Theorem 3]{he_discretized_2016} (see also \cite[Proposition 2]{bourgain_spectral_2012}, where a constant is needed instead of $\delta^\epsilon$ in the assumption (ii)). 
\begin{prop}[He]\label{prop:sumprolin}
	Given $\kappa>0,\sigma\in(0,2)$, there exists $\epsilon>0$ such that for all $\delta>0$ sufficiently small, if $X\subset B_{\R^2}(0,\delta^{-\epsilon})$ and $A\subset B_{End(\R^2)}(0,\delta^{-\epsilon})$ satisfy the following $(\delta,\kappa,\sigma,\epsilon)$ assumption:
	
	(i)
	\begin{equation*}
	\forall\rho\geq\delta,\ \nrho(A)\geq\delta^\epsilon\rho^{-\kappa},
	\end{equation*} 
	
	(ii) for every nonzero proper linear subspaces $W\subset \R^2$ , there is $a \in A$
	and $w \in W \cap B_{\R^2}(0, 1)$ such that $d(aw, W ) \geq \delta^\epsilon$ .
	
	(iii)
	\begin{equation*}
	\forall\rho\geq\delta,\ \nrho(X)\geq\delta^\epsilon\rho^{-\kappa},
	\end{equation*}
	
	(iv)$\ndelta(X)\leq \delta^{-(n-\sigma)-\epsilon}$.
	
	Then
	\begin{equation*}
	\ndelta(X+X)+\sup_{a\in A}\ndelta(X+aX)\geq \delta^{-\epsilon}\ndelta(X).
	\end{equation*}
\end{prop}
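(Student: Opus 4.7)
My plan is to follow the proof of Theorem \ref{prop:sumpro} closely, replacing the torus action of $(\R^*)^n$ on $\R^n$ with the general linear action of $End(\R^2)$ on $\R^2$. I proceed by contradiction: assume that for every $\epsilon>0$ there exist $A,X$ satisfying (i)--(iv) but $A\subset S_\delta(X,\delta^{-\epsilon})$, and aim to reach a contradiction for $\epsilon$ small enough depending only on $\kappa,\sigma$.

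The first step combines an analog of Proposition \ref{prop:sumproalg} (the sum-product in linear representations result of He used throughout this paper) with the ring-like structure of $S_\delta(X,K)$. Under (i) and (ii), for every $\epsilon_0>0$ there exist $s\in\bb N$ and $\epsilon_1>0$ such that $\epsilon\le \epsilon_1$ implies
\[
B_{End(\R^2)}(0,\delta^{\epsilon_0})\subset \l A\r_s+B_{End(\R^2)}(0,\delta);
\]
hypothesis (ii) is exactly the quantitative non-stabilizer condition one needs, since the nonzero proper submodules of $\R^2$ are lines. From $A\subset S_\delta(X,\delta^{-\epsilon})$, parts (i)--(ii) of Lemma \ref{lem:30} propagate this to $B_{End(\R^2)}(0,\delta^{\epsilon_0})\subset S_\delta(X,\delta^{-O(s)\epsilon})$, and inverting the scalar $\tfrac{\delta^{\epsilon_0}}{2}\id$ by part (iii) followed by scalar multiplication yields
\[
B_{End(\R^2)}(\id,1/2)\subset S_\delta(X,\delta^{-O(s)\epsilon-O(\epsilon_0)}).
\]

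The main step is a matrix-ball analog of Lemma \ref{lem:plusreg}: under (iii) and (iv) there exists $\epsilon'=\epsilon'(\kappa,\sigma)>0$ with
\[
\sup_{a\in B_{End(\R^2)}(\id,1/2)}\ndelta(X+aX)\geq \delta^{-\epsilon'}\ndelta(X).
\]
Following the proof of Lemma \ref{lem:plusreg}, let $a$ be Lebesgue-uniform on $B_{End(\R^2)}(\id,1/2)$ and $\varphi_a(x,y)=x+ay$. Jensen's inequality on $t\mapsto 1/t$ reduces the task to an upper bound $\mathbb{E}(\omega_\delta(\varphi_a,X\times X))\ll \ndelta(X)^3\delta^{\epsilon'}$. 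For a maximal $\delta$-separated $\tilde X\subset X$ one splits the pairs $(y,y')\in\tilde X^2$ at a threshold $\rho$. When $\|y-y'\|\ge\rho$, the linear map $a\mapsto a(y-y')$ from $M_2(\R)\cong\R^4$ to $\R^2$ has rank $2$ with singular values both equal to $\|y-y'\|$, so the Lebesgue small-ball estimate gives
\[
\bb P\{\|a(y-y')+(x-x')\|\le 5\delta\}\ll \delta^2\rho^{-2},
\]
replacing the diagonal-coordinate bound $\delta^n\rho^{-n}$ of Lemma \ref{lem:plusreg}. When $\|y-y'\|<\rho$, assumption (iii) combined with the standard inequality $\ndelta(X+X)\ge \nrho(X)\max_b\ndelta(X\cap B(b,\rho))$ and the working hypothesis $\ndelta(X+X)<\delta^{-\epsilon}\ndelta(X)$ gives $\ndelta(X\cap B(b,\rho))\le\delta^{-2\epsilon}\rho^\kappa\ndelta(X)$, so the total number of such pairs is $\ll\delta^{-2\epsilon}\rho^\kappa\ndelta(X)^2$. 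Using (iv) to bound $\ndelta(X)\delta^2\rho^{-2}$ and optimizing at $\rho=\delta^{(2-\sigma)/(2+\kappa)}$ produces the desired energy bound.

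Selecting $\epsilon_0$ sufficiently small and then $\epsilon$ small enough that $O(s)\epsilon+O(\epsilon_0)<\epsilon'$ contradicts the inclusion $B_{End(\R^2)}(\id,1/2)\subset S_\delta(X,\delta^{-O(s)\epsilon-O(\epsilon_0)})$ obtained earlier. The main conceptual obstacle is precisely the matrix-ball version of Lemma \ref{lem:plusreg}: the coordinate-projection non-concentration used there must be replaced by the pure ball non-concentration (iii). Fortunately, in dimension two the map $a\mapsto az$ is a surjective rank-two linear map for every nonzero $z$, so no coordinate-subspace degeneracy needs to be treated separately and a uniform small-ball estimate is available; checking this bound carefully and verifying that the balancing at $\rho=\delta^{(2-\sigma)/(2+\kappa)}$ still produces a strictly positive exponent is the main technical point.
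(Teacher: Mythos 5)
The paper itself offers no proof of this statement: Proposition \ref{prop:sumprolin} is quoted from He (\cite[Theorem 3]{he_discretized_2016}), and the paper only proves the deduction of Proposition \ref{prop:sumproC} from it. Your reconstruction, however, has a genuine gap at its first step. You claim that hypotheses (i)--(ii) give, via an analogue of Proposition \ref{prop:sumproalg}, the inclusion $B_{End(\R^2)}(0,\delta^{\epsilon_0})\subset\l A\r_s+B_{End(\R^2)}(0,\delta)$, asserting that (ii) is exactly the non-stabilizer condition needed. It is not: (ii) only forbids $A$ from approximately preserving a line, i.e.\ from being close to a subalgebra with an invariant line, whereas filling a ball of the four-dimensional algebra $End(\R^2)$ by $\l A\r_s$ requires $A$ to be quantitatively away from \emph{every} proper subalgebra. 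Concretely, take $A=\{R_\theta:\theta\in[\pi/4,\pi/3]\}$ a set of rotation matrices: it satisfies (i) for any $\kappa\leq 1$ and satisfies (ii) with a constant in place of $\delta^\epsilon$ (every $R_\theta$ with $\theta\in[\pi/4,\pi/3]$ moves each unit vector off its own line by at least $\sin(\pi/4)$), yet $\l A\r_s$ lies in the two-dimensional subalgebra of rotation-scaling matrices (a copy of $\C$ inside $M_2(\R)$), so $\l A\r_s+B(0,\delta)$ has $\delta$-covering number $O_s(\delta^{-2})$ and cannot contain $B_{End(\R^2)}(0,\delta^{\epsilon_0})$, whose covering number is of order $\delta^{-4(1-\epsilon_0)}$. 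Consequently the key intermediate inclusion $B_{End(\R^2)}(\id,1/2)\subset S_\delta(X,\delta^{-O(s)\epsilon-O(\epsilon_0)})$ is unavailable, and your contradiction scheme collapses precisely in the case the proposition is meant to cover (and which is the relevant one for Proposition \ref{prop:sumproC}): $A$ close to a conjugate of $\C$. A correct proof must confront the Burnside-type dichotomy: the approximate algebra generated by $A$ is either all of $M_2(\R)$, where your route works, or a (possibly ill-conditioned) conjugate of $\C$, where one needs a growth-under-a-ball lemma inside that subalgebra; this case analysis is the substance of He's argument and is absent from yours.

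The second half of your sketch, the matrix-ball analogue of Lemma \ref{lem:plusreg}, is essentially sound: for $z=y-y'$ with $\|z\|\geq\rho$ the map $a\mapsto az$ from $M_2(\R)$ to $\R^2$ is surjective with both singular values equal to $\|z\|$, so the small-ball bound $\delta^2\rho^{-2}$ is correct, and the near-diagonal pairs are controlled by (iii) exactly as in the paper, with balls in place of cylinders. Two smaller points: with hypothesis (iv) in the form $\ndelta(X)\leq\delta^{-(2-\sigma)-\epsilon}$, the first term of the energy bound is $\ndelta(X)^3\delta^{\sigma-O(\epsilon)}\rho^{-2}$, so the balancing choice is $\rho\approx\delta^{\sigma/(2+\kappa)}$ (gain $\delta^{\kappa\sigma/(2+\kappa)}$), not $\rho=\delta^{(2-\sigma)/(2+\kappa)}$, which gives no gain when $\sigma$ is small; and to use Lemma \ref{lem:30}(ii) you must first place $\id$ in $S_\delta(X,\delta^{-\epsilon})$, which uses the negation of the $\ndelta(X+X)$ part of the conclusion, while $A,X$ being bounded only by $\delta^{-\epsilon}$ requires a mild rescaling before any neighbourhood-of-identity statement is invoked.
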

\begin{proof}[Form Proposition \ref{prop:sumprolin} to Proposition \ref{prop:sumproC}]
We identify $\C$ with $\R^2$, and the multiplication of a complex number $x+iy$ for $x,y\in\R$ is seen as the multiplication of the matrix $\begin{pmatrix}
x & -y \\ y & x
\end{pmatrix}$. Then we only need to verify Assumption (ii) of Proposition \ref{prop:sumprolin}. By Assumption (ii) of Proposition \ref{prop:sumproC}, there exists $a\in A$ such that $d(a,\R)\geq \delta^{\epsilon}$. We take $w$ in $W$ with unit length, since the distance is invariant under the rotation, then
\[d(aw,W)=d(a,\R)\geq\delta^{\epsilon}. \]
The proof is complete.
\end{proof}

With the same argument as in Section \ref{sec:appmul}, by using Proposition \ref{prop:sumproC} we have
\begin{prop}
		Given $\kappa_0>0$, there exist $\epsilon,\epsilon_1>0$ and $k\in \bb N$ such that the following holds for $\delta>0$ small enough. Let $\mu$ be a probability measure on the annulus $\{z\in \mathbb{C}: 1/2\leq |z|\leq 2\}$ which satisfies $(\delta,\kappa_0,\epsilon)$ projective non concentration assumption, that is, 
		\begin{equation}
		\forall\rho\geq\delta,\quad\sup_{a\in\bb R,\theta\in \R}\mu\{z\in\C| \Re(e^{i\theta}z)  \in B_\R(a,\rho) \}\leq\delta^{-\epsilon} \rho^{\kappa_0}.
		\end{equation}	
		
		Then for all $\xi\in\C$ with $\|\xi\|\in[\delta^{-1}/2,\delta^{-1}]$,
		\begin{equation}
		|\hat{\mu}_k(\xi)|=\left|\int \exp(2i\pi\Re(\xi z_1\cdots z_k))\dd\mu(z_1)\cdots \dd\mu(z_k)\right|\leq \delta^{\epsilon_1}.
		\end{equation}
\end{prop}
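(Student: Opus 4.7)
The plan is to mirror the argument of Section \ref{sec:appmul} verbatim, replacing Theorem \ref{prop:sumpro} by its complex counterpart Proposition \ref{prop:sumproC} at the one decisive step and being careful about how the projective non-concentration hypothesis interacts with complex (as opposed to coordinatewise) multiplication. The final reduction will again be via Lemma \ref{lem:alpbet}, which is a statement about the Fourier transform on $\R^n$ and applies directly with $n=2$ once $\C$ is identified with $\R^2$.

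First I would establish the $\C$-analogue of the $L^2$-flattening lemma: given $\sigma_1,\kappa>0$ there exists $\epsilon>0$ such that for any symmetric Borel probability measure $\nu$ on the disc of radius $\delta^{-\epsilon}$ in $\C$ with $\|\nu_\delta\|_2^2\ge\delta^{-\sigma_1}$ and satisfying
\[\forall\rho\ge\delta,\ \sup_{a\in\R,\theta\in\R}\nu\{z\in\C|\Re(e^{i\theta}z)\in B_\R(a,\rho)\}\le\delta^{-\epsilon}\rho^\kappa,\]
one has $\int\|\nu_\delta*(m_y)_*\nu_\delta\|_2^2\,d\nu(y)\le\delta^\epsilon\|\nu_\delta\|_2^2$. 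The proof follows the one of Lemma \ref{lem:l2fla} line by line: truncate to $E=\{y:|y|\ge\delta^{O(\epsilon)}\}$ (using that $|y|$ small is ruled out by projective non-concentration applied to any $\theta$, since a small disc around $0$ is covered by two narrow strips), apply the dyadic decomposition of Lemma \ref{lem:2adic}, run the additive energy calculation, and feed the outputs of Proposition \ref{prop:BSG} into a sum-product estimate. Here the sum-product step uses Proposition \ref{prop:sumproC} with $n=2$ instead of Theorem \ref{prop:sumpro}.

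The main obstacle will be verifying the hypotheses of Proposition \ref{prop:sumproC} on the sets $X_o\subset X_i$ and $b_o^{-1}B_1\subset B$ produced by the Additive-Multiplicative Balog-Szemerédi-Gowers theorem. The non-concentration conditions (i), (iii) and the covering bound (iv) transfer exactly as in the proof of Lemma \ref{lem:b1x1}, replacing the projections $\pi_j$ by a single projection (no coordinate structure is present now, only the full $2$-dimensional scale-$\rho$ covering bound is required; note this bound follows from the projective non-concentration applied in two orthogonal directions $\theta,\theta+\pi/2$). The decisive check is condition (ii): $b_o^{-1}B_1$ must be $\delta^{O(\epsilon)}$-away from $\R\subset\C$. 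For $b=b_o^{-1}b'$ with $b'\in B_1\subset\supp\nu$, writing $b_o=r_oe^{i\varphi_o}$ one has $\Im(b_o^{-1}b')=|b_o|^{-1}\Re(e^{i(\pi/2-\varphi_o)}b')$, and the projective non-concentration assumption applied with $\theta=\pi/2-\varphi_o$ gives $\nu_1\{b':|\Im(b_o^{-1}b')|\le\rho\}\le\delta^{-O(\epsilon)}\rho^{\kappa_0}$, which yields the required separation from $\R$ once $|b_o|\gtrsim\delta^{O(\epsilon)}$; the latter is part of the definition of the good event $E$.

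With the $L^2$-flattening lemma in hand, the remainder of the argument is purely formal. Define $\sigma_{k,r}$ and $\nu=\tfrac12((\mu_k*\mu_k^-)^{(2r)}+(\mu_k*\mu_k^-)^{(r)})$ exactly as in Section \ref{sec:foudec}. Lemma \ref{lem:nucon} goes through without change because projective non-concentration on $\C$ is preserved under additive convolution (same Fubini argument) and because multiplicative convolution by factors in the annulus $\{1/2\le|z|\le 2\}$ only rescales $\rho$ by a bounded factor. Lemma \ref{lem:bonmu} is an identity between Fourier transforms and uses only the multiplicative structure of the underlying ring together with H\"older and Plancherel, and so works verbatim on $\C$. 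Running the dichotomy "either $\sigma_{k,r}\ge\kappa_1$ and $L^2$-flattening decreases it by a fixed amount, or $\sigma_{k,r}<\kappa_1$ and Lemma \ref{lem:alpbet} closes" produces $\epsilon_1$ depending only on $\kappa_0$ and concludes the proof.
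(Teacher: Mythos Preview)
Your proposal is correct and follows exactly the approach the paper indicates: the paper merely states that the same argument as in Section~\ref{sec:appmul} goes through once Theorem~\ref{prop:sumpro} is replaced by Proposition~\ref{prop:sumproC}. You have in fact spelled out more of the details than the paper does, in particular the verification of hypothesis~(ii) of Proposition~\ref{prop:sumproC} via the identity $\Im(b_o^{-1}b')=\pm|b_o|^{-1}\Re(e^{i\theta}b')$ for a suitable $\theta$, which is precisely the complex analogue of the subalgebra check in Lemma~\ref{lem:b1x1}.
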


Then by the same argument as in \cite[Section 3.1]{bourgain2017fourier} and Lemma \ref{lem:equivalent}, we obtain
\begin{prop}
	Given $\kappa_0>0$, there exist $\epsilon>0$ and $k\in \bb N$ such that the following holds for $\delta\in(0,1)$. Let $C_0>0$ and let $\mu_1,\cdots, \mu_k$ be Borel measures on the annulus $\{z\in \mathbb{C}: 1/C_0\leq |z|\leq C_0\}$ with total mass less than $C_0$ and which satisfy projective non concentration assumption, that is, for $j=1,\cdots ,k$
	\begin{equation}
	\forall\rho\in[C_0\delta,C_0^{-1}\delta^{\epsilon}],\quad\sup_{a\in\bb R,\theta\in \R}\mu_j\{z\in\C| \Re(e^{i\theta}z)  \in B_\R(a,\rho) \}\leq C_0\rho^{\kappa_0}.
	\end{equation}	
	
	Then there exists a constant $C_1$ depending only on $C_0,\kappa_0$ such that for all $\xi\in\C$ with $\|\xi\|\in[\delta^{-1}/2,\delta^{-1}]$,
	\begin{equation}
	\left|\int \exp(2i\pi\Re(\xi z_1\cdots z_k))\dd\mu_1(z_1)\cdots \dd\mu_k(z_k)\right|\leq C_1\delta^{\epsilon}.
	\end{equation}
\end{prop}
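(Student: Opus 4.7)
The plan is to deduce the result from the single-measure complex Fourier-decay proposition just above, by the two-step pattern used to pass from Proposition \ref{prop:dissum1} to Propositions \ref{prop:dissum} and \ref{prop:sumproduct} in the real case. There are three issues to dispose of: converting the projective non concentration assumption, stated for $\rho\in[C_0\delta,C_0^{-1}\delta^\epsilon]$ with a constant $C_0$, into the form accepted by the single-measure result; reducing from $k$ distinct measures $\mu_1,\ldots,\mu_k$ to one; and rescaling from the annulus $\{1/C_0\leq|z|\leq C_0\}$ to the unit annulus $\{1/2\leq|z|\leq 2\}$. For the first I would apply Lemma \ref{lem:equivalent} to pass between the two formulations of non concentration, absorbing $C_0$ and the range endpoints into a $\delta^{-\epsilon'}$ prefactor at the cost of replacing $\kappa_0$ by a fixed fraction $\kappa_0'$.

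To handle $k$ distinct measures, I would use the polynomial interpolation trick from the proof of Proposition \ref{prop:dissum}. Set $\lambda_z=\sum_{j=1}^k z_j\mu_j$ for $z\in\R_{\geq 0}^k$ and
\[
F(z)=\int\exp\bigl(2i\pi\Re(\xi w_1\cdots w_k)\bigr)\dd\lambda_z(w_1)\cdots\dd\lambda_z(w_k);
\]
the quantity to estimate is, up to a factor $k!$, the coefficient of $z_1\cdots z_k$ in $F(z)$, a polynomial of total degree $k$. For each fixed $z$ with $\lambda_z$ nonzero, the normalized probability measure $\lambda_z/\|\lambda_z\|$ still satisfies projective non concentration (with constants worsened by a factor $\|\lambda_z\|/\min_j\|\mu_j\|$, harmless after enlarging $C_0$), so the single-measure proposition bounds $|F(z)|$ uniformly on the grid $\{0,\ldots,k\}^k$. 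Equivalence of norms on the finite-dimensional space of polynomials in $k$ variables of degree $\leq k$ then extracts the coefficient of $z_1\cdots z_k$ with the same asymptotic decay.

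To replace the unit annulus by $\{1/C_0\leq|z|\leq C_0\}$, I would partition each $\mu_j$ into $O(\log_2 C_0)$ dyadic shells $\{2^{\ell-1}\leq|z|<2^\ell\}$ and rescale each shell by $2^{-\ell}$ to land in the unit annulus, exactly as in the proof of Proposition \ref{prop:sumproduct}. The Fourier integral then splits into a finite sum of integrals of the rescaled measures $\tilde\mu_j^{\ell_j}$ against a rescaled frequency $\xi'=\xi\cdot 2^{\ell_1+\cdots+\ell_k}$ whose modulus lies in a range $[\delta_1^{-1}/2,\delta_1^{-1}]$ for some $\delta_1$ comparable to $\delta$ up to a power of $C_0$. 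Each rescaled $\tilde\mu_j^{\ell_j}$ inherits the projective non concentration, because multiplication of $z$ by a positive real scalar $2^{-\ell}$ merely rescales $\rho$ by the same factor in the inequality defining non concentration, affecting only overall constants. Summing over the $O((\log C_0)^k)$ shell tuples is absorbed into the final constant $C_1=C_1(C_0,\kappa_0)$.

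The main obstacle is essentially bookkeeping of constants: one has to verify at each reduction step that the interval $[C_0\delta,C_0^{-1}\delta^\epsilon]$ on which non concentration holds is preserved, with a fixed fraction of $\kappa_0$ surviving as the new exponent and the assorted $C_0$-powers harmless for the final decay rate. No new sum-product input beyond the single-measure complex proposition is required, which is precisely why the excerpt attributes the argument to Lemma \ref{lem:equivalent} and \cite[Section 3.1]{bourgain2017fourier}.
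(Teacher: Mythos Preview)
Your proposal is correct and matches the paper's approach: the paper gives no detailed proof but simply refers to Lemma \ref{lem:equivalent} and \cite[Section 3.1]{bourgain2017fourier}, and the latter is precisely the combination of the polynomial interpolation trick (as in Proposition \ref{prop:dissum}) and the dyadic rescaling of the annulus (as in Proposition \ref{prop:sumproduct}) that you describe. One small imprecision: the factor $\|\lambda_z\|/\min_j\|\mu_j\|$ is not the right bookkeeping---normalizing $\lambda_z$ worsens the non-concentration constant by $|z|/\|\lambda_z\|$, and the case of small $\|\lambda_z\|$ is handled separately by the trivial bound $|F(z)|\leq\|\lambda_z\|^k$, exactly as in the proof of Proposition \ref{prop:dissum}.
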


\section*{Acknowledgement}
This is part of author's Ph.D. thesis, written under the supervision
of Jean-François Quint at the University of Bordeaux.
The author wishes to express his thanks to Weikun He and Nicolas de Saxcé for explaining to me their work. 
The author would like to thank Jean-François Quint for his detailed comments
on an earlier version of the paper.

The author would also like to thank the referee for pointing out a mistake and for carefully reading the manuscript.

\noindent Jialun LI\\
Current address: Universit\"at Z\"urich\\
jialun.li@math.uzh.ch
\end{document}